\documentclass[12pt,reqno]{amsart}
  \usepackage[all,2cell,arrow]{xy}
  \usepackage{amsfonts}
  \usepackage{amsthm}
  \usepackage{amsmath}
  \usepackage{amssymb}
\usepackage{mathtools}
\usepackage{tensor}
 \numberwithin{equation}{section}
 \usepackage{xcolor}
\usepackage{graphicx}
\usepackage[hidelinks]{hyperref}
\usepackage{enumerate,xspace}

\UseAllTwocells
\CompileMatrices


\renewcommand{\epsilon}{\varepsilon}
\renewcommand{\phi}{\varphi}

\newcommand{\ca}{\ensuremath{\mathcal A}\xspace}
\newcommand{\cb}{\ensuremath{\mathcal B}\xspace}
\newcommand{\cc}{\ensuremath{\mathcal C}\xspace}
\newcommand{\cd}{\ensuremath{\mathcal D}\xspace}

\newcommand{\cf}{\ensuremath{\mathcal F}\xspace}
\newcommand{\cg}{\ensuremath{\mathcal G}\xspace}

\newcommand{\ci}{\ensuremath{\mathcal I}\xspace}
\newcommand{\cj}{\ensuremath{\mathcal J}\xspace}
\newcommand{\ck}{\ensuremath{\mathcal K}\xspace}
\newcommand{\cl}{\ensuremath{\mathcal L}\xspace}
\newcommand{\cm}{\ensuremath{\mathcal M}\xspace}

\newcommand{\cv}{\ensuremath{\mathcal V}\xspace}
\newcommand{\cw}{\ensuremath{\mathcal W}\xspace}

\newcommand{\bbi}{\ensuremath{\mathbb I}\xspace}
\newcommand{\bbn}{\ensuremath{\mathbb N}\xspace}

\newcommand{\atwo}{{\mathbf 2}}

\newcommand{\Mod}{\ensuremath{\mathbf{Mod}}\xspace}

\newcommand{\Cat}{\ensuremath{\mathbf{Cat}}\xspace}

\newcommand{\Set}{\ensuremath{\mathbf{Set}}\xspace}
\newcommand{\SSet}{\ensuremath{\mathbf{SSet}}\xspace}
\newcommand{\qCat}{\ensuremath{\mathbf{qCat}}\xspace}

\newcommand{\Cart}{\ensuremath{\mathbf{Cart}}\xspace}
\newcommand{\coCart}{\ensuremath{\mathbf{coCart}}\xspace}
\newcommand{\DiscCart}{\ensuremath{\mathbf{DiscCart}}\xspace}
\newcommand{\DisccoCart}{\ensuremath{\mathbf{DisccoCart}}\xspace}

\newcommand{\iso}{\bbi}

\newcommand{\co}{\ensuremath{^{\textnormal{co}}}}
\newcommand{\op}{\ensuremath{^{\textnormal{op}}}}

\newcommand{\coop}{\ensuremath{^{\textnormal{co,op}}}}

\newcommand{\fib}{\ensuremath{_{\textnormal{fib}}}}

\DeclareMathOperator{\cod}{cod}
\DeclareMathOperator{\dom}{dom}

\DeclareMathOperator{\lali}{\mathbf{Rari}}

\DeclareMathOperator{\lari}{\mathbf{Lari}}

\DeclareMathOperator{\TF}{\mathbf{TF}}

\DeclareMathOperator{\WE}{Equiv}
\DeclareMathOperator{\colim}{colim}
\DeclareMathOperator{\Fib}{\mathbf{Fib}}

\newcommand{\two}{\ensuremath{\mathbf{2}\xspace}}

\newcommand{\x}{\times}

\def\1c#1{\stackrel{#1}{\to}}

  \newtheorem{proposition}{Proposition}[section]
  \newtheorem{lemma}[proposition]{Lemma}
  
  \newtheorem{corollary}[proposition]{Corollary}
    
  \newtheorem{theorem}[proposition]{Theorem}

  \theoremstyle{definition}
  \newtheorem{definition}[proposition]{Definition}

  \newtheorem{example}[proposition]{Example}
  \newtheorem{examples}[proposition]{Examples}

  \theoremstyle{remark}
  \newtheorem{remark}[proposition]{Remark}

  \newcounter{c}
  \renewcommand{\[}{\setcounter{c}{1}$$}
  \newcommand{\etyk}[1]{\vspace{-7.4mm}$$\begin{equation}\Label{#1}
  \addtocounter{c}{1}}
  \renewcommand{\]}{\ifnum \value{c}=1 $$\else \end{equation}\fi}
  \setcounter{tocdepth}{2}

\newcommand\isf{{\mathrel{\rotatebox{270}{\scriptsize$\twoheadrightarrow$}}}}

\begin{document}

\title[Accessible $\infty$-cosmoi]{Accessible $\infty$-cosmoi}
\author{John Bourke}
\address{Department of Mathematics and Statistics, Masaryk University, Kotl\'a\v rsk\'a 2, Brno 61137, Czech Republic}
\email{bourkej@math.muni.cz}

\author{Stephen Lack}
\address{School of Mathematical and Physical Sciences, Macquarie
 University NSW 2109, Australia}
\email{steve.lack@mq.edu.au}
\thanks{The first-named author acknowledges the support of the Grant Agency of the Czech Republic under the grants 19-00902S and 22-02964S. The second-named author acknowledges with gratitude the support of an
 Australian Research Council Discovery Project DP190102432.}

\subjclass{18N60, 18C35, 18D20, 18N40}
\date{August 31, 2022}

\begin{abstract}
We introduce the notion of an accessible $\infty$-cosmos and prove
that these include the basic examples of $\infty$-cosmoi and are
stable under the main constructions.  A consequence is that the vast
majority of known examples of $\infty$-cosmoi are accessible. By the
adjoint functor theorem for homotopically enriched categories which
we proved in an earlier paper, joint with Luk\'{a}\v{s}
Vok\v{r}\'{i}nek, it follows, for instance, that all such 
$\infty$-cosmoi have flexibly weighted homotopy colimits. 
\end{abstract}

\maketitle 

\tableofcontents

\section{Introduction}

The theory of $\infty$-categories has experienced an explosion of interest in
recent years, due in part to the needs of researchers in various areas
of geometry, topology, logic, and mathematical physics. Multiple
approaches have appeared, leading to multiple definitions of
$(\infty,1)$-category, or multiple {\em models} in the usual parlance,
since each of these is seen as being only some sort of presentation of
the ``true'' notion. Prominent examples of these models include quasicategories,
complete Segal spaces, and Segal categories. Each of these, as
well as various others, has its own distinct flavour, coming with
various resulting advantages and disadvantages. 

Substantial progress has been made in the comparison between these
models --- see \cite{BergnerBook} for a recent survey --- but it is
also natural to hope for a ``model independent'' approach. Over a
number of years Riehl and Verity have been developing one such
approach, under the name of {\em $\infty$-cosmos}, and their theory
has now reached a high level of power and sophistication. For an
introduction to many aspects of this theory, see their book \cite{elements}.

The theory of $\infty$-cosmoi is very much homotopical in nature. 
In an earlier paper \cite{BourkeLackVokrinek} joint with Luk\'{a}\v{s}
Vok\v{r}\'{i}nek, we proved a very general homotopical adjoint
functor theorem for enriched categories. This actually included
Freyd's General Adjoint Functor Theorem (GAFT) as the special case of
$\Set$-enriched (i.e. unenriched) homotopically trivial categories, but the
main motivation was the study of $\infty$-cosmoi, which are in fact
certain simplicially enriched categories.

In the case of ordinary (unenriched, homotopically trivial) categories, the
solution set condition which appears in the GAFT can be
simplified using the theory of accessible categories
\cite{MakkaiPare,AR}. We found similar simplifications were available
in the enriched homotopical setting of \cite{BourkeLackVokrinek}, and
included versions of our main results which were formulated using
(enriched) accessible categories. 

Among other things, we proved the following:

\begin{theorem}
  Let $U\colon\cl\to\ck$ be an accessible cosmological functor between
  $\infty$-cosmoi which are accessible simplicially enriched
  categories. Then $U$ has a homotopical left adjoint. 
\end{theorem}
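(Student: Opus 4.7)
The plan is to derive this theorem as a specialization of the accessible enriched homotopical adjoint functor theorem from \cite{BourkeLackVokrinek}. That general theorem asserts that an accessible, suitably limit-preserving functor between accessible homotopically enriched categories admits a homotopical left adjoint, with the accessibility hypotheses playing the role that the solution set condition plays in Freyd's GAFT. The task therefore reduces to verifying that the $\infty$-cosmos setting matches the abstract framework of \cite{BourkeLackVokrinek}, so that the general theorem becomes directly applicable.

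First I would identify the relevant homotopical enrichment on $\ck$ and $\cl$. Each $\infty$-cosmos is, by definition, a simplicially enriched category equipped with a class of isofibrations and a class of equivalences, admitting the flexibly weighted simplicial limits built from finite products, pullbacks along isofibrations, inverse limits of towers of isofibrations, and simplicial cotensors. This data fits the homotopical enrichment used in \cite{BourkeLackVokrinek}, and the hypothesis that $\ck$ and $\cl$ are accessible simplicially enriched categories provides the smallness control needed on the enriched side — essentially, both are generated under the appropriate enriched colimits by a small subcategory of presentable objects.

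Second I would verify the preservation hypotheses on $U$. By the definition of cosmological functor, $U$ preserves isofibrations, equivalences, and the flexibly weighted simplicial limits that constitute the $\infty$-cosmos structure; in particular it preserves simplicial cotensors, products, pullbacks of isofibrations, and limits of towers of isofibrations. Together with the accessibility of $U$, these are exactly the inputs required by the accessible HAFT of \cite{BourkeLackVokrinek}. Applying that theorem to $U$ then yields a homotopical left adjoint, completing the argument.

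The main obstacle is essentially a bookkeeping one: one has to confirm that the notion of accessibility employed in the hypothesis coincides with the enriched accessibility hypothesis of the general theorem from \cite{BourkeLackVokrinek}, and that the preservation properties packaged into the phrase \emph{cosmological functor} supply all of the flexibly weighted homotopy limits demanded by that theorem. No new homotopical ideas beyond those of \cite{BourkeLackVokrinek} should be needed; the substance of the work lies in showing that the axioms of an $\infty$-cosmos, and of an accessible cosmological functor between such, fit the abstract framework precisely.
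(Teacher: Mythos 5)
Your proposal is correct and follows the same route as the paper: this theorem is not reproved here but is quoted from \cite{BourkeLackVokrinek}, where it arises exactly as you describe, namely as a specialization of the general accessible homotopical adjoint functor theorem (the paper points to \cite[Theorem~8.9]{BourkeLackVokrinek}) once one checks that the $\infty$-cosmos axioms, the enriched accessibility hypothesis, and the limit-preservation built into the notion of cosmological functor instantiate the abstract framework. The verification steps you outline are the right ones and no further ideas are needed.
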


\begin{theorem}
  Let $\ck$ be an $\infty$-cosmos which is accessible as a
  simplicially enriched category. Then $\ck$ has flexibly weighted homotopy
  colimits. 
\end{theorem}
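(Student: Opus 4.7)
The plan is to derive this theorem from the preceding adjoint functor theorem by applying the latter to the weighted power functors. Let $\cd$ be a small simplicial category and $W\colon\cd\to\SSet$ a flexible weight. A flexibly weighted homotopy colimit of a diagram $D\colon\cd\to\ck$ by $W$ is, by the universal property, a value of a homotopical left adjoint to the \emph{weighted power} functor $\{W,-\}\colon\ck\to[\cd,\ck]$ sending $X$ to the diagram $d\mapsto W(d)\pitchfork X$. It thus suffices, for each flexible $W$, to verify that $\{W,-\}$ is an accessible cosmological functor between accessible $\infty$-cosmoi; the preceding theorem then supplies a homotopical left adjoint $W*-$, whose value at $D$ is the required flexibly weighted homotopy colimit.

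This reduces the problem to three verifications. First, for every small simplicial category $\cd$ the simplicial functor category $[\cd,\ck]$ must itself be an accessible $\infty$-cosmos; this should be one of the ``stability under the main constructions'' results promised in the abstract and hence available earlier in the paper, perhaps by transfer along the canonical map $[\cd,\ck]\to\ck^{\mathrm{ob}\cd}$. Second, $\{W,-\}$ should be cosmological essentially formally: it preserves isofibrations, since cotensoring by a simplicial set preserves isofibrations in any $\infty$-cosmos, and it preserves the defining conical and cotensor limits, since those commute with cotensors pointwise.

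The third and crucial step is accessibility of $\{W,-\}$ as a simplicial functor, and here the hypothesis that $W$ is flexible really bites. I would argue by induction along a cellular presentation of $W$: a flexible weight is built from representables by iterated pushouts of cofibrations of basic weights and transfinite composition (and, depending on the precise meaning of flexibility in use, retracts or splittings of idempotents). For a representable weight the cotensor functor reduces to cotensoring by a single simplicial set, which is accessible because the simplicial set in question is $\kappa$-presentable for some $\kappa$. Accessibility should then propagate through pushouts and transfinite composites in the weight variable, yielding accessibility of $\{W,-\}$ at some rank depending on the size of the presentation of $W$.

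The main obstacle I anticipate is exactly this third verification: matching the combinatorial/cofibrancy notion of flexibility for $W$ with the enriched-categorical notion of accessibility for the associated cotensor functor, and controlling the accessibility rank uniformly enough to invoke the adjoint functor theorem. A secondary concern is verifying that $[\cd,\ck]$ is an accessible $\infty$-cosmos when $\cd$ is not finite, which will likely require combining the closure properties of accessible $\infty$-cosmoi with the observation that a small simplicial category is itself $\kappa$-small for some regular $\kappa$.
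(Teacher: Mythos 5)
Your overall strategy---realize the weighted colimit as a homotopical left adjoint and invoke the preceding adjoint functor theorem---is indeed how this theorem is obtained (the paper offers no independent proof here: both displayed theorems are quoted as special cases of Theorem~8.9 of Bourke--Lack--Vok\v{r}\'inek, with the remark that the second follows from the first ``as in the unenriched case''). But your implementation has a decisive gap in the choice of codomain. For a general small simplicial category $\cd$, the simplicial functor category $[\cd,\ck]$ is \emph{not} an $\infty$-cosmos: its hom-objects are ends $\int_d \ck(Fd,Gd)$, i.e.\ limits of quasicategories along maps that are not isofibrations, and these need not be quasicategories. Already for $\cd=\two$ the hom $\ck^\two(f,g)$ is a pullback of $\ck(A,C)\to\ck(A,D)\leftarrow\ck(B,D)$ with neither leg an isofibration---which is exactly why the paper and Riehl--Verity work with $\ck^{\isf}$ and its Reedy-style isofibrations rather than with $\ck^\two$. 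So the quoted theorem, which concerns accessible cosmological functors between accessible $\infty$-cosmoi, cannot be applied to $\{W,-\}\colon\ck\to[\cd,\ck]$, and no stability result supplies $[\cd,\ck]$ as an accessible $\infty$-cosmos. The actual argument fixes the diagram $S$ and varies the weight instead, applying the (more general, non-cosmological) adjoint functor theorem of the earlier paper to $A\mapsto\ck(S-,A)\colon\ck\to[\cd\op,\SSet]$, whose homotopical left adjoint evaluated at a cofibrant $W$ is $W*S$; the codomain is then a presheaf category over $\SSet$, which that framework handles directly.

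Your ``crucial third step'' is also aimed at the wrong target. Accessibility of $\{W,-\}$ needs no cellular induction on $W$ and no flexibility: colimits in the functor category are pointwise, so $\{W,-\}$ is accessible as soon as each $W(d)\pitchfork -$ is, and that holds for \emph{arbitrary} simplicial sets by condition (2) of accessibility of $\ck$ as a simplicial category (Proposition~\ref{prop:enr-acc}). What flexibility of $W$ actually buys is, first, the existence of the weighted limits $\{W,A\}$ at all---an $\infty$-cosmos has flexible limits but not general weighted limits---and, second, the homotopical correctness of the universal property: a homotopical left adjoint only inverts the comparison map up to equivalence of quasicategories, and this agrees with the homotopy-colimit condition precisely because a flexible $W$ is already cofibrant, so no replacement $Q\to W$ is needed. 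The obstacle you anticipate (matching cofibrancy of $W$ with accessibility of the cotensor functor) is therefore not where the work lies; the work lies in having an adjoint functor theorem whose codomain need not be an $\infty$-cosmos.
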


The second of these is in fact a straightforward consequence of the
first (much as in the usual unenriched, homotopically trivial case). As such
it is an analogue of the classical result that an accessible category
which is complete is also cocomplete; indeed these are both special
cases of a single theorem \cite[Theorem~8.9]{BourkeLackVokrinek}.

It is significant since the definition of $\infty$-cosmos involves the
existence of various sorts of limit, but not of any colimits. Colimits
can be used for various things such as the formation of Kleisli
objects and localizations, while left adjoints as in the
first theorem could be used for example to construct free completions of
$\infty$-categories under some class of limits or colimits. 

Of course the interest in the two theorems quoted would be slight if there were
not a good supply of $\infty$-cosmoi which were accessible in the
relevant sense. We described some examples in
\cite{BourkeLackVokrinek}, all firmly based in the world of
quasicategories, but promised to expand this list in a future
paper. This is what we shall do here. 
In fact we introduce a notion of accessibility for $\infty$-cosmoi
which is stronger than that of the earlier paper; we do this because
of the good stability properties it enjoys, which allow us to
construct many new examples of accessible $\infty$-cosmoi from any
given one. 

An $\infty$-cosmos is a universe in which one can develop the theory
of $\infty$-categorical structures, much as a (suitably endowed)
2-category is a universe in which one can develop the theory of
categorical structures.  
Given an $\infty$-cosmos $\ck$, whose objects are referred to as $\infty$-categories,
 there are further $\infty$-cosmoi of
\begin{itemize}
\item $\infty$-categories with limits or colimits of some type (or
  a combination of both)
\item isofibrations of $\infty$-categories (analogous to the usual
  ``arrow categories'')
\item various flavours of fibration of $\infty$-categories
  (cartesian or cocartesian, 1-sided or 2-sided, discrete or not).
\end{itemize}

These constructions of new $\infty$-cosmoi from old are all described
in \cite{elements}; what we do here is show that if the original
$\infty$-cosmos is accessible, in our sense, then so is each of the
resulting ones.

In particular, we could take as our starting $\infty$-cosmos that
consisting of the quasicategories, the compete Segal spaces, or the
Segal categories: each of these is accessible.

A precursor to the present work is \cite{Bourke2019Accessible}, which deals
with $2$-categories of categorical structures rather than $\infty$-cosmoi of $\infty$-categorical
structures.  In that setting, accessibility is seen to be closely
related to weakness --- for instance,
 the 2-category of monoidal categories and strong monoidal
functors is accessible, but the full sub-2-category consisting of
strict monoidal categories is not.  
One of the guiding ideas in the present work is that
since in the $\infty$-categorical world we are primarily interested in weak structures, the
vast majority, if not all, the examples of interest should form accessible $\infty$-cosmoi.

We now turn to an outline of the paper. We begin in
Section~\ref{sect:prelim} with a brief review of the necessary background on $\infty$-cosmoi and accessible
categories, both ordinary and simplicial. Then in
Section~\ref{sect:defn} we introduce our main concept of accessible
$\infty$-cosmos, and show that these include the basic examples
arising from suitable simplicially enriched model categories.
In Section~\ref{sect:first}, we study a first raft of closure properties
of accessible $\infty$-cosmoi, including $\infty$-cosmoi of isofibrations,
slices and duals of $\infty$-cosmoi, and pullbacks of cosmological embeddings.
The technical heart of the paper is Section~\ref{sect:lali},
where we show that for an accessible $\infty$-cosmos $\ck$, the
$\infty$-cosmos $\lali(\ck)$ of {\em left adjoint left inverses} in
$\ck$ is also accessible. In Section~\ref{sect:TF} we prove the
corresponding fact about {\em trivial fibrations} in $\ck$, with further results on equivalences.
In Section~\ref{sect:Apps} we use the results of the previous three
sections to deduce all our remaining closure properties for accessible
$\infty$-cosmoi.

\section{Preliminaries}\label{sect:prelim}

In this section, we run through the key concepts needed later in the paper.

\subsection{The Joyal model structure}\label{sect:joyal}

Of importance in the theory of $\infty$-cosmoi is the \emph{Joyal model structure} on the category $\SSet$ of simplicial sets.  This is a combinatorial model structure whose cofibrations are the monomorphisms and whose fibrant objects are the quasicategories, and makes $\SSet$ into a cartesian closed model category.  

The model structure has generating cofibrations the boundary
inclusions $\partial \Delta^n \hookrightarrow \Delta^n$.  The
fibrations between quasicategories will be called \emph{isofibrations
  of quasicategories} and can be characterised as those morphisms
having the right lifting property with respect to the inner horn
inclusions $ \Lambda^n_k \to \Delta^n$ together with the endpoint
inclusions $1 \to \iso$, where $\iso$ is the nerve of the free-living
isomorphism.  The weak equivalences between quasicategories will often
be referred to as \emph{equivalences of quasicategories}.

\subsection{$\infty$-cosmoi}\label{sect:infty-cosmos}

An {\em $\infty$-cosmos} is a simplicially enriched category $\ck$
together with a class of morphisms called \emph{isofibrations}, denoted $A \twoheadrightarrow B$, closed
under composition and containing the isomorphisms, such that 
\begin{enumerate}
\item each hom $\ck(A,B)$ is a quasicategory;
\item the morphism $\ck(A,p)\colon\ck(A,B)\twoheadrightarrow\ck(A,C)$
  is an isofibration of quasicategories for each isofibration $p
  \colon B \twoheadrightarrow C$;
\item \ck has products, powers by simplicial sets, pullbacks along
  isofibrations, and limits of countable towers of isofibrations;
\item the class of isofibrations is closed under these limits, under
  Leibniz powers by monomorphisms of simplicial sets, and contains all
  maps with terminal codomain.  
\end{enumerate}
Following \cite{elements}, we call the various
limits appearing in (3) above {\em cosmological limits}.  

\begin{examples}
 (a) A simple example of an $\infty$-cosmos is the
$2$-category $\Cat$ of (small) categories.  Like any $2$-category,
this can be viewed as a simplicially enriched category by taking the
nerves of its hom-categories.  Isofibrations between categories are
those functors $F\colon\cc \to \cd$ having the isomorphism lifting
property: namely, given an isomorphism $f\colon A \to FB \in \cd$,
there exists an isomorphism $f'\colon A' \to B \in \cc$ such that $Ff'=f$.  As explained in Proposition~1.2.11 of \cite{elements}, with this choice of isofibrations $\Cat$ becomes a $\infty$-cosmos.

(b) A fundamental example is the $\infty$-cosmos $\qCat$
of quasicategories, which is the full simplicially enriched
subcategory of $\SSet$ with objects the quasicategories, and with
isofibrations as described in Section~\ref{sect:joyal} --- see
Proposition~1.2.10 of \cite{elements}.
\end{examples}

Each $\infty$-cosmos $\ck$ has a \emph{homotopy $2$-category} $h\ck$.
This has the same objects as $\ck$ and hom-categories $h\ck(A,B)=\pi(\ck(A,B))$ where $\pi$ is the left adjoint to the nerve functor.  An important example is the \emph{$2$-category of quasicategories} \cite{Riehl2015The-2-category}, which is the homotopy $2$-category $h\qCat$.

A morphism $f\colon B\to C$ in $\ck$ is said to be an {\em
  equivalence} if the induced $\ck(A,f)\colon\ck(A,B)\to\ck(A,C)$ is an
equivalence of quasicategories for all $A\in\ck$, and a trivial fibration if it is both an equivalence and an isofibration.

We write $\ck_0$ for the underlying ordinary category of a
simplicially enriched category $\ck$. We write $\ck^\two$ for
the simplicially enriched category of arrows in $\ck$. We define full
subcategories of $\ck^\two$ as follows:
\begin{itemize}
\item $\ck^\isf$ consists of the isofibrations
\item $\WE(\ck)$ consists of the equivalences
\item $\TF(\ck)=\WE(\ck)\cap\ck^\isf$ consists of the trivial fibrations.
\end{itemize}

A \emph{cosmological functor} $F\colon\ck \to \cl$ between $\infty$-cosmoi is a simplicially enriched functor which preserves isofibrations and cosmological limits.

\subsection{Weighted limits, colimits, and their homotopy versions}

In addition to cosmological limits, we now run through the various kinds of (weighted homotopy) limits and colimits that we encounter in the present paper.  

 Let $\cc$ be a small simplicially enriched category and consider the
 enriched functor category $[\cc,\SSet]$, whose objects are called
 weights.  In larger diagrams, we will sometimes denote hom-objects
 $[\cc,\SSet](F,G)$ by $(F,G)$.   Let $W \in [\cc,\SSet]$ be a weight.
 Given a diagram $S\colon\cc \to \ck$ in a simplicially enriched
 category $\ck$ a weighted limit $L$ is defined by a cone $\eta\colon W \to \ck(L,S-)$ for which the induced morphism
\begin{equation}\label{eq:cone}
\xymatrix{
\ck(A,L) \ar[r] & [\cc,\SSet](W,\ck(A,S-)) } 
\end{equation}
is invertible. A weighted colimit is a weighted limit in $\ck\op$.

\subsubsection{Flexible limits}
 Flexible limits and cofibrantly-weighted limits are those whose defining weights are flexible or cofibrant.  To understand them, we observe that the Joyal model structure on $\SSet$ induces the enriched projective model structure on $[\cc,\SSet]$  by Proposition~A.3.3.2 and Remark~A.3.3.4 of \cite{Lurie}, and this has generating cofibrations
  $$\ci = \{\partial \Delta^n \times \cc(X,-) \to \Delta^n \times  \cc(X,-)\colon n \in \mathbb N, X \in \cc\}$$ Riehl and Verity's flexible weights\footnote{
    This differs from the usage in 2-category theory, where ``flexible'' is taken to mean cofibrantly-weighted.} are precisely the $\ci$-cellular weights, and so --- in particular --- cofibrant weights.  Flexibly weighted limits in an $\infty$-cosmos, or just flexible limits, are of importance since, by Proposition 6.2.8(i) of \cite{elements}, each $\infty$-cosmos $\ck$ admits them.

\subsubsection{Weighted colimits that are homotopy colimits}
Let us now turn to the question of when colimits are homotopy colimits
--- here we emphasise the colimit point of view which will be our
primary interest.  Consider a weight $W\colon\cc\op\to\SSet$, not necessarily cofibrant,
but suppose now that $\ck$ is locally fibrant and
$S\colon\cc\to\ck$, and let $p\colon Q \to W$ be a cofibrant replacement of $W$.  The weighted colimit $W*S$ is said to be a \emph{homotopy colimit} if the induced morphism
\begin{equation*}
\xymatrix{
\ck(W*S,A) \ar[r]^-{\cong} &  (W,\ck(S-,A)) \ar[r]^{p^*} & (Q,\ck(S-,A))
}
\end{equation*} 
is an equivalence of quasicategories. 
Note that this is equally to say that the second component 
\[ p^*\colon [\cc\op,\SSet](W,\ck(S-,A)) \to [\cc\op,\SSet](Q,\ck(S-,A)) \]
is an equivalence of quasicategories.  
Since $\ck$ is locally fibrant it follows that the property of being a
homotopy colimit is independent of the choice of cofibrant replacement
so that, in testing for homotopy colimits, we are free to assume that
$Q$ is flexible and that $p\colon Q \to W$ is a trivial fibration.

Of particular interest in this paper is the case  
where $\cc$ is a small $\lambda$-filtered category and ${
  W}=\Delta 1$ is the terminal weight, so that { $W*S$} is the $\lambda$-filtered colimit of $S$.  Specialising the above situation, we can thus speak of $\lambda$-filtered colimits being homotopy colimits.  

Dually, a limit $\{W,S\}$ is a homotopy limit if it is a homotopy colimit in $\ck\op$.

\subsection{Accessible categories} 

We now turn to some basic results about accessible
categories. For further information, see \cite{AR} or \cite{MakkaiPare}.

A category $\ck$ is {\em
  $\lambda$-accessible}, for a regular cardinal $\lambda$, just when it
is the free completion under $\lambda$-filtered colimits of a small
category. More concretely, this will be the case when $\ck$ has
$\lambda$-filtered colimits, and there is a small full subcategory
$\cg$ whose objects are $\lambda$-presentable and such that every
object of $\ck$ is a $\lambda$-filtered colimit of objects in $\cg$.

A category is {\em accessible} if it is $\lambda$-accessible for some
regular cardinal $\lambda$.

An accessible category is complete if and only if it is cocomplete, in
which case it is said to be a {\em locally presentable} category.

A functor between accessible categories is said to be accessible if it
preserves $\lambda$-filtered colimits for some regular cardinal $\lambda$.

\begin{example}\label{ex:adj-acc}  (See \cite[Proposition~2.3]{AR}.)
  Any left or right adjoint between accessible categories is an
  accessible functor. 
\end{example}

The Makkai-Par\'e Limit Theorem \cite[Theorem~5.1.6]{MakkaiPare}  asserts that the 2-category of
accessible categories, accessible functors, and natural
transformations has bicategorical limits (bilimits), and these are formed at the
level of underlying ordinary categories.  For our purposes, important examples of bilimits include products, powers by small categories (functor categories in $\Cat$) as well as comma objects, of which special cases are slice categories. 
We shall also apply the Makkai-Par\'e Limit Theorem in the
following special case.

\begin{proposition}\label{prop:pb-acc}
  Consider a pullback of categories
  \[ \xymatrix{
      \ca \ar[r]^-P \ar[d]_Q & \cb \ar@{>>}[d]^F \\ \cc \ar[r]_-G & \cd
    } \]
  in which $F$ and $G$ are accessible functors between accessible
  categories, and $F$ is an isofibration of categories. Then $\ca$ is an accessible
  category and $P$ and $Q$ are accessible functors. 
\end{proposition}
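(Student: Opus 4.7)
The natural plan is to deduce this from the Makkai--Par\'e Limit Theorem, which gives bilimits in the 2-category $\mathbf{Acc}$ of accessible categories, accessible functors and natural transformations, computed on underlying categories. Strict pullbacks are not in general bilimits, but the strict pullback of an isofibration coincides up to equivalence with the corresponding bipullback, and this is exactly the situation we are in.

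First, I would form the \emph{iso-comma} (bipullback) $\cb \downarrow_{\cong} \cc$ whose objects are triples $(B,C,\phi\colon FB\cong GC)$ with the evident morphisms. This is a bilimit in $\mathbf{Acc}$ of $F$ and $G$, so by Makkai--Par\'e it is accessible and its projections to $\cb$ and $\cc$ are accessible functors. Next, there is a canonical comparison functor
\[
J\colon \ca \longrightarrow \cb\downarrow_{\cong}\cc, \qquad (B,C)\mapsto (B,C,\mathrm{id}_{FB})
\]
from the strict pullback into the iso-comma, which commutes with both projections on the nose.

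The main technical point is to show that $J$ is an equivalence of categories, using that $F$ is an isofibration. Given a triple $(B,C,\phi\colon FB\cong GC)$ in the iso-comma, use the isofibration property of $F$ to lift $\phi$ to an isomorphism $\psi\colon B\cong B'$ with $F\psi=\phi$; then $(B',C)$ lies in $\ca$ and $(\psi,\mathrm{id}_C)$ gives an isomorphism in the iso-comma between $(B,C,\phi)$ and $J(B',C)$. This makes $J$ essentially surjective, and fully faithfulness is immediate since a morphism $(B,C)\to (B',C')$ in either category is just a pair $(b,c)$ compatible with $F$ and $G$, the isomorphism in the target being an identity. Hence $J$ is an equivalence.

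Finally, I would conclude. Accessibility is invariant under equivalence of categories (a small generator of $\lambda$-presentables and existence of $\lambda$-filtered colimits transfer across an equivalence), so $\ca$ is accessible. The projections $P$ and $Q$ from $\ca$ agree, under $J$, with the bipullback projections composed with a quasi-inverse of $J$; since equivalences and the bipullback projections are accessible, so are $P$ and $Q$. The only subtle step is verifying essential surjectivity of $J$, which is exactly where the isofibration hypothesis on $F$ is used; everything else is formal.
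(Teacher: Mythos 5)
Your proposal is correct and follows essentially the same route as the paper: the paper simply cites Joyal--Street (``Pullbacks equivalent to pseudopullbacks'') for the fact that the strict pullback along an isofibration is a bipullback, and then invokes the Makkai--Par\'e Limit Theorem, whereas you prove that cited fact by hand via the iso-comma and the comparison equivalence $J$. The only cosmetic slip is that $P$ and $Q$ are the bipullback projections composed with $J$ itself (not with a quasi-inverse), which does not affect the conclusion.
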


\proof
The assumption that $F$ is an isofibration means that this pullback is
also a bipullback \cite{JS-pseudopullbacks}, and now the result
follows by the Limit Theorem.
\endproof

In this paper, we are primarily interested in simplicially enriched
categories.  A simplicially enriched category is $\lambda$-accessible just
when it is the free completion of a small (simplicially enriched)
category under (enriched) $\lambda$-filtered colimits%
\footnote{ For general $\cv$ another notion of enriched accessibility is considered in \cite{BorceuxQuinteiro}.  However, in the special case of simplicial enrichment, it is equivalent to the notion described above by \cite[Theorem~3.14]{LackTendas2021}. }.  In the simplicially enriched categories
of interest to us each object $A$ moreover has a power (also known as cotensor) 
$X\pitchfork A$ by each simplicial set $X$.  In this context there
are simpler descriptions of accessibility, as described in the following proposition,
which follows immediately from \cite[Proposition~8.11]{BourkeLackVokrinek}.

\begin{proposition}\label{prop:enr-acc}
  For a simplicially enriched category $\ck$ with powers, the
  following are equivalent:
  \begin{enumerate}
  \item $\ck$ is $\lambda$-accessible as a simplicially enriched category;
  \item the underlying ordinary category $\ck_0$ is
    $\lambda$-accessible, and the hom-functor
    $\ck(A,-)\colon\ck_0\to\SSet$ is $\lambda$-accessible for each
    $\lambda$-presentable object of $\ck_0$;
  \item $\ck_0$ is an accessible category and
    $\Delta[n]\pitchfork-\colon\ck_0\to\ck_0$ is an accessible functor for
    each $n\in\bbn$. 
  \end{enumerate}
  For such a $\ck$, the functors $\ck(A,-)\colon\ck_0\to\cv_0$
  and $X\pitchfork-\colon\ck_0\to\ck_0$ are accessible for all
  $A\in\ck$ and $X\in\cv$.
\end{proposition}

\section{Accessible $\infty$-cosmoi}\label{sect:defn}

\begin{definition}\label{defn:main}

An $\infty$-cosmos $\ck$ is said to be \emph{accessible} if 
\begin{enumerate}
\item $\ck$ is accessible as a simplicially enriched category;
\item $\ck^\isf_0$ is accessible and accessibly embedded in
  $\ck^\two_0$; \label{item:isofibs-accessible}
\item There exists a regular cardinal $\lambda$ such that $\lambda$-filtered colimits exist in $\ck$ and are homotopy colimits.\label{item:hty-colims}
\end{enumerate}
 A cosmological functor between accessible $\infty$-cosmoi is
accessible if its underlying functor is accessible.
\end{definition}

\begin{remark}
  In \cite{BourkeLackVokrinek} an $\infty$-cosmos was said to be
  accessible when it is so as a simplicially enriched category, our
  Condition (1). 
  As mentioned in the introduction, and as anticipated in a
  footnote in \cite[Section~9.4]{BourkeLackVokrinek}, we have strengthened the definition here so that the
  class of accessible $\infty$-cosmoi has better stability properties,
  such as Proposition~\ref{prop:fib-acc} below.  In fact
  Proposition~\ref{prop:fib-acc} and the other results in
  Section~\ref{sect:first} would all hold if we added only Condition~\eqref{item:isofibs-accessible}; it is in order to prove the accessibility of $\lali(\ck)$ for
  an accessible $\infty$-cosmos $\ck$ that we include  Condition~\eqref{item:hty-colims} in
  the definition. For further comments on Condition~~\eqref{item:hty-colims},
  see Remark~\ref{rmk:open-problem}. 
\end{remark}

\begin{remark}
In this paper we are largely avoiding the question of indices of
accessibility, but perhaps a few words are appropriate. If $\ck$ is $\lambda$-accessible as a simplicially-enriched category,
 it follows by the characterization in Proposition~\ref{prop:enr-acc}
 that it is also $\lambda'$-accessible for any
 $\lambda'\triangleright\lambda$, in the sense of
 \cite[Definition~2.12]{AR}. Similarly, if $\ck^\isf_0$ is
 $\lambda$-accessible closed in $\ck^\two_0$ under $\lambda$-filtered
 colimits, then the same is true for any
 $\lambda'\triangleright\lambda$.
 On the other hand, if $\ck$ satisfies
 Condition~\eqref{item:hty-colims} for a given $\lambda$, then it does
 so for any $\lambda'>\lambda$. By the Uniformization
 Theorem~\cite[Theorem~2.19]{AR}, if $\ck$ is an accesssible
 $\infty$-cosmos, there are arbitrarily large $\lambda$ for which
 $\ck$ is $\lambda$-accessible as an enriched category, $\ck^\isf_0$ is
 accessible and accessibly embedded in $\ck^\two_0$, and
 Condition~\eqref{item:hty-colims} holds for the given $\lambda$.
\end{remark}

In addition to the accessibility of powering functors, further exactness properties are easily seen to hold in an accessible $\infty$-cosmos.

\begin{lemma}\label{lem:limits}
Let $\ck$ be an $\infty$-cosmos for which $\ck$ is $\lambda$-accessible as a simplicially enriched category.  Then pullbacks of isofibrations and finite products commute with $\lambda$-filtered colimits in $\ck$.
\end{lemma}
\begin{proof}
Let $J\colon\ck_\lambda\to\ck$ be the inclusion of the
full subcategory consisting of the $\lambda$-presentable objects.
The induced functor $\ck(J-,1)\colon\ck\to[\ck\op_\lambda,\SSet]$
is fully faithful, preserves $\lambda$-filtered colimits, and
preserves any existing limits.  Since it reflects isomorphisms, $\ck(J-,1)$ also reflects any commutativities between existing limits and $\lambda$-filtered colimits that hold in $[\ck\op_\lambda,\SSet]$.  Since $\SSet$ is locally finitely presentable, both pullbacks 
and finite products commute with filtered colimits in $\SSet$ and so in $[\ck\op_\lambda,\SSet]$, as required.
\end{proof}

In the following sections, we will show that various
constructions applied to accessible $\infty$-cosmoi yield new
accessible $\infty$-cosmoi. But for this to have any interest, one needs a stock
of initial examples. The next result provides some.

\begin{proposition}
  Let $\cm$ be a simplicially enriched category, equipped with a
  combinatorial model structure which is enriched with respect to the
  Joyal model structure on $\SSet$. Suppose that every fibrant object
  of $\cm$ is also cofibrant. Then the full subcategory
  $\cm\fib$ of $\cm$ consisting of the fibrant objects is an
  accessible $\infty$-cosmos, in which the isofibrations and
  equivalences are the fibrations and weak equivalences (between fibrant objects in each case) of the model structure.   Moreover, the inclusion $\cm\fib \hookrightarrow \cm$ is an accessible embedding.
\end{proposition}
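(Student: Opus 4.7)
The plan is to verify that $\cm\fib$ satisfies each of the $\infty$-cosmos axioms and the four accessibility conditions of Definition~\ref{defn:main}. The $\infty$-cosmos structure rests on the SM7 axiom for the Joyal-enrichment of $\cm$: the Leibniz hom of a cofibration $i$ with a fibration $p$ is a Joyal fibration, trivial if either $i$ or $p$ is. Applying this with $i = (\emptyset \to A)$---which is a cofibration because every fibrant object is cofibrant by hypothesis---and a fibration $p\colon B \twoheadrightarrow C$ between fibrants yields both that each hom $\cm(A,B)$ is a quasicategory and that $\cm(A,p)$ is an isofibration of quasicategories. Closure of fibrations under Leibniz powers by monomorphisms is another instance, since monomorphisms are the Joyal cofibrations. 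The remaining cosmological limits exist in $\cm$ by bicompleteness and preserve fibrancy via the standard RLP argument, as both fibrant objects and fibrations are cut out of $\cm$ by right lifting against small sets; the map from a fibrant object to the terminal is a fibration by definition.

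For the accessibility conditions, combinatoriality ensures $\cm_0$ is locally presentable. Condition~(1), together with the accessibility of the embedding $(\cm\fib)_0 \hookrightarrow \cm_0$, follows from the classical result that the subcategory of objects injective with respect to a small set of morphisms between presentable objects in a locally presentable category is accessible and accessibly embedded. Condition~(2) holds because $X \pitchfork -$ is right adjoint on $\cm_0$ to $X \otimes -$, hence accessible between locally presentable categories; it preserves fibrancy via SM7 applied to $\emptyset \to X$ and $A \twoheadrightarrow 1$, so its restriction to $(\cm\fib)_0$ remains accessible. For condition~(3), an arrow of $\cm_0^\two$ is an isofibration between fibrants precisely when its source, target, and itself satisfy RLP against the generating trivial cofibrations, giving once more an accessibly embedded accessible subcategory.

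The main obstacle is condition~(4), that $\lambda$-filtered colimits exist and are homotopy colimits. Here I appeal to the fact---standard for combinatorial model categories, due in this form to Lurie and Dugger---that for $\lambda$ sufficiently large, $\lambda$-filtered colimits in $\cm$ preserve weak equivalences as well as fibrant objects and fibrations; enlarging $\lambda$ further if necessary, such colimits in $\cm\fib$ exist and are created from those in $\cm$. To identify these as homotopy colimits in the sense of the paper, I would take a flexible (hence cofibrant) replacement $Q \to \Delta 1$ of the terminal weight on any $\lambda$-filtered category $\cc$, and observe that the induced map $Q * S \to \colim S$ in $\cm$ arises as a $\lambda$-filtered colimit of weak equivalences between cofibrant diagrams, hence is itself a weak equivalence; enriched-homming into a fibrant $A$ and invoking SM7 one last time delivers the required equivalence of quasicategories. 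The final assertion, that $\cm\fib \hookrightarrow \cm$ is an accessible embedding, is then immediate: the inclusion is fully faithful and preserves $\lambda$-filtered colimits by the observations just made.
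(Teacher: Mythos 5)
Your handling of the $\infty$-cosmos axioms and of Conditions (1)--(3) is sound and close in spirit to the paper, which outsources the cosmos structure and enriched accessibility to \cite[Proposition~E.1.1]{elements} and \cite[Proposition~9.1]{BourkeLackVokrinek}; for Condition (3) the paper pulls back the known accessible, accessibly embedded subcategories of fibrations and weak equivalences in $\cm^{\atwo}$ rather than re-running the injectivity-class argument, but your route is equivalent.

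The problem is Condition (4). Your key claim --- that the comparison map $p*S\colon Q*S\to\Delta 1*S$ ``arises as a $\lambda$-filtered colimit of weak equivalences between cofibrant diagrams'' --- is unjustified, and I do not see how to make it literally true: $Q*S$ is a coend $\int^{c}Q(c)\otimes S(c)$, not a filtered colimit of pointwise data, so $p*S$ is not presented as a filtered colimit of weak equivalences. Nor can you get it directly from Quillen bifunctoriality: $-*S$ is left Quillen from the \emph{projective} model structure on weights (since $S$, being pointwise fibrant hence pointwise cofibrant, is injectively cofibrant), but $\Delta 1$ is not projectively cofibrant, so Ken Brown's lemma does not apply to $p\colon Q\to\Delta 1$; and in the other configuration $-*S$ would need $S$ to be \emph{projectively} cofibrant, which pointwise cofibrancy does not give. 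The paper's proof supplies the missing idea: take a projective cofibrant replacement $q\colon QS\to S$ of the diagram as well, form the square with edges $p*QS$, $p*S$, $Q\Delta 1*q$, $\Delta 1*q$, and show three of its four edges are weak equivalences --- $Q\Delta 1*q$ using the (projective, injective) left Quillen bifunctor, $p*QS$ using the (injective, projective) one (here all weights are injectively cofibrant since every simplicial set is cofibrant), and $\Delta 1*q=\colim q$ using closure of weak equivalences under $\lambda$-filtered colimits --- and then conclude $p*S$ is a weak equivalence by 2-out-of-3. Only after that does the enriched-hom/SM7 step you describe go through (noting that both $Q\Delta 1*S$ and $\Delta 1*S$ are cofibrant). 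As written, your argument for (4) has a genuine gap at exactly the step that carries the technical weight of the proposition.
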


\proof
$\cm\fib$ is an $\infty$-cosmos by \cite[Proposition~E.1.1]{elements} and accessible as a simplicially enriched category by
\cite[Proposition~9.1]{BourkeLackVokrinek}.  The same result shows that $\cm\fib \hookrightarrow \cm$ is an accessible embedding.

Now the full subcategories $\cf,\cw \hookrightarrow \cm^{\atwo}$ of fibrations and weak equivalences are accessible and accessibly embedded.  Therefore, by Proposition~\ref{prop:pb-acc}, the pullback $\cf \cap \cm\fib \hookrightarrow \cm\fib^{\atwo}$ is accessible and accessibly embedded, establishing Condition~\eqref{item:isofibs-accessible}.

Now since $\cf$ and $\cw$ are accessibly embedded, there exists 
a  regular cardinal  $\lambda$ such that both $\cf$ and $\cw$ are closed in
$\cm^{\atwo}$  under $\lambda$-filtered colimits;  this 
closure property of $\cf$   also ensures that $\cm\fib \hookrightarrow \cm$ is
closed under $\lambda$-filtered colimits.  We will use these
assumptions to prove that $\lambda$-filtered colimits in
$\cm\fib$ are homotopy colimits, thereby establishing Condition~\eqref{item:hty-colims}.

To this end, let $\cc$ be a small $\lambda$-filtered category.  Since $\cm$ is cocomplete as an enriched category, we can consider the weighted colimit functor
\begin{equation*}
\xymatrix{
-*-:[\cc\op,\SSet]_0 \times [\cc,\cm]_0 \to \cm_0
}
\end{equation*}

Both $\SSet$, equipped with the Joyal model structure, and $\cm$ are
combinatorial model categories.  Therefore $[\cc\op,\SSet]_0$ and
$[\cc,\cm]_0$ each admit both the projective and injective model
structure.  The key result for us is that when one of these is
equipped with the projective model structure and the other with the
injective model structure, the weighted colimit functor becomes a left
Quillen bifunctor.  (This is a special case of Theorem~C.3.13
 of \cite{elements}, which generalises Gambino's result \cite{Gambino} for the Kan-Quillen model structure.)

Now consider a diagram $S\colon\cc \to \cm$  taking values among the
fibrant objects.  Since $\cm\fib$ is closed under
$\lambda$-filtered colimits, the colimit $\Delta 1 * S$ is also
fibrant.  Let $p\colon Q \Delta 1 \to \Delta 1$ in
$[\cc\op,\SSet]$ and $q\colon QS \to S$ in $[\cc,\cm]$ be projective cofibrant replacements.  We then have a commutative square
\begin{equation*}
\xymatrix{
Q\Delta 1 * QS \ar[d]_{p * QS} \ar[rr]^{Q\Delta 1 * q} && Q\Delta 1 * S \ar[d]^{p * S} \\
\Delta 1 * QS \ar[rr]_{\Delta 1 * q} && \Delta 1 * S
}
\end{equation*}
in $\cm$.  

Now since $S$ is pointwise fibrant, it is pointwise cofibrant and
therefore injectively cofibrant.  Hence $q\colon QS \to S$ is a weak
equivalence between injectively cofibrant objects.  Since $Q\Delta 1$
is projectively cofibrant and $-*-$ a left Quillen bifunctor with
respect to the (projective, injective) model structures, it
follows that $Q\Delta 1 * q \colon Q\Delta 1 * QS \to Q\Delta 1 * S$ is a weak equivalence between cofibrant objects.

Similarly, since all objects are cofibrant in $\SSet$, $p\colon Q \Delta 1
\to \Delta 1$ is a weak equivalence between injectively cofibrant
objects, whilst $QS$ is projectively cofibrant.  Therefore taking the
(injective, projective) choice, it follows that the left leg of the square $p * QS \colon Q\Delta 1 * QS \to \Delta 1 * QS$ is a weak equivalence of cofibrant objects.

Since weak equivalences are closed under $\lambda$-filtered colimits
in $\cm$, it is also true that $\Delta 1 * q\colon \Delta 1 * QS \to \Delta 1 * S$ is a weak equivalence.  Therefore, by 2-from-3 on the above square, the morphism $p * S \colon Q\Delta 1 * S \to \Delta 1 * S$ is a weak equivalence too.  Moreover since $\Delta 1 * S$ is fibrant, it is also cofibrant, so that $p*S$ is a weak equivalence between cofibrant objects.

Since $\cm$ is an enriched model category,  if $A \in \cm$ is fibrant
the map $$\cm(p * S,A) \colon \cm(\Delta 1 * S,A) \to \cm(Q\Delta 1 * S,A)$$ is a weak equivalence, whence so is the isomorphic $$[\cc\op,\SSet](\Delta 1,\ck(S-,A)) \to [\cc\op,\SSet](Q\Delta 1,\ck(S-,A))$$ as required.

\endproof

\begin{example}
The $\infty$-cosmos $\qCat$ of quasicategories arises in this way as
$\cm\fib$, where $\cm$ is $\SSet$ equipped with the Joyal model
structure. The $\infty$-cosmos CSS of complete Segal spaces arises in
this way as $\cm\fib$, where $\cm$ is a simplicial model structure on the
category of bisimplicial sets due to Rezk
\cite[Theorem~7.2]{Rezk-htyhty}.
The proposition can also be applied to the model category $\Cat$, with
the ``natural'' model structure: see
\cite[Proposition~1.2.11]{elements}; in this case, of course,
$\Cat\fib$ is just $\Cat$. For further examples, including
various models for $(\infty,n)$-categories, see \cite[Appendix~E]{elements}.
\end{example}

\begin{example}
  An example of a non-accessible $\infty$-cosmos is $\Cat\op$. As
  explained in \cite[Example~E.1.6]{elements}, this is an
  $\infty$-cosmos, with the injective-on-objects functors as
  isofibrations. But it is not accessible, since the underlying
  category is not accessible; indeed, if a category and its opposite
  are both accessible then the category must be a preorder: see \cite[Theorem~1.64]{AR}.
\end{example}

\section{First stability properties of accessible $\infty$-cosmoi}\label{sect:first}

 In Section 6 of \cite{elements}, Riehl and Verity show that a given $\infty$-cosmos gives rise to many others, such as the $\infty$-cosmos of isofibrations and slice $\infty$-cosmoi.  In the present section, we investigate a first group of these constructions, showing that they lift to the world of accessible $\infty$-cosmoi.

\subsection{The $\infty$-cosmos of isofibrations}

If $\ck$ is an $\infty$-cosmos, then $\ck^\isf$ becomes one too 
\cite[Proposition~6.1.1]{elements} on defining a commutative square
\begin{equation}\label{eq:fib-fib}  \xymatrix{
    A \ar@{>>}[d]_p \ar[r] & A' \ar@{>>}[d]^{p'} \\ B \ar[r] & B' }
\end{equation}
to be an an isofibration just when both the lower horizontal $B\to
B'$ and the induced map $A\to B\x_{B'}A'$
are isofibrations in $\ck$. 

\begin{proposition}\label{prop:fib-acc}
  If $\ck$ is an accessible $\infty$-cosmos, so is $\ck^\isf$, and the inclusion $\ck^{\isf} \to \ck^{\atwo}$ is an accessible functor. 
\end{proposition}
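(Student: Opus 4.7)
The plan is to verify the four conditions of Definition~\ref{defn:main} for $\ck^\isf$.  Condition~(1) and the accessibility of the inclusion $\ck^\isf_0 \to \ck^\two_0$ are immediate from Condition~(3) applied to $\ck$.  For Condition~(2), powers in $\ck^\isf$ are formed pointwise from $\ck$: applying the Leibniz closure of isofibrations under monomorphisms to $\emptyset \hookrightarrow X$ ensures that $X \pitchfork p$ remains an isofibration.  Thus $(X \pitchfork -)_0\colon \ck^\isf_0 \to \ck^\isf_0$ is the restriction of the pointwise extension to $\ck^\two_0$ of the accessible functor $(X \pitchfork -)_0\colon \ck_0 \to \ck_0$; and since $\ck^\isf_0 \hookrightarrow \ck^\two_0$ is a fully faithful accessible embedding, a standard argument shows that accessibility transfers to the restriction.

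For Condition~(3), we realise $(\ck^\isf)^\isf_0$ as a pullback of accessible categories and invoke Proposition~\ref{prop:pb-acc}.  A square~\eqref{eq:fib-fib} in $(\ck^\isf)^\two_0$ lies in $(\ck^\isf)^\isf_0$ precisely when both its bottom horizontal $B \to B'$ and the induced map $A \to B \x_{B'} A'$ are isofibrations in $\ck$.  The corresponding assignments $L_1, L_2\colon (\ck^\isf)^\two_0 \to \ck^\two_0$ are accessible functors: for $L_1$ this is immediate, while for $L_2$ we apply Lemma~\ref{lem:limits}, which implies that the pullback $B \x_{B'} A'$ along the isofibration $p'$ commutes with $\lambda$-filtered colimits for suitably large $\lambda$.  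Hence $(\ck^\isf)^\isf_0$ is the pullback
\[ \xymatrix{ (\ck^\isf)^\isf_0 \ar[r] \ar[d] & (\ck^\isf)^\two_0 \ar[d]^{(L_1, L_2)} \\ \ck^\isf_0 \x \ck^\isf_0 \ar[r] & \ck^\two_0 \x \ck^\two_0 } \]
whose bottom arrow is a replete full embedding, and hence an isofibration of categories.  Proposition~\ref{prop:pb-acc} now delivers both the accessibility of $(\ck^\isf)^\isf_0$ and its accessible embedding into $(\ck^\isf)^\two_0$.

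The main obstacle is Condition~(4).  Choose $\lambda$ large enough that $\lambda$-filtered colimits in $\ck$ exist and are homotopy colimits, that $\ck^\isf_0 \hookrightarrow \ck^\two_0$ preserves $\lambda$-filtered colimits, and that $\ck^\isf_0$ is $\lambda$-accessible; then $\lambda$-filtered colimits in $\ck^\isf$ exist and are formed pointwise on domains and codomains from $\ck$.  Fix an isofibration $q\colon C \twoheadrightarrow E$ in $\ck$, a $\lambda$-filtered diagram $D = (p_j\colon A_j \twoheadrightarrow B_j)_{j \in \cj}$ in $\ck^\isf$, and a projective cofibrant replacement $Q \to \Delta 1$ in $[\cj\op, \SSet]$.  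The hom-functor $\ck^\isf(D-, q) \colon \cj\op \to \SSet$ is the pullback in $[\cj\op, \SSet]$ of
\[ \ck(A-, C) \xrightarrow{\ck(A-, q)} \ck(A-, E) \xleftarrow{\ck(p-, E)} \ck(B-, E), \]
whose left leg is a projective fibration between projectively fibrant objects.  Since the right Quillen functors $\lim$ and $[\cj\op, \SSet](Q, -)$ (the latter Quillen since $Q$ is projectively cofibrant) preserve such fibrations, both of the resulting pullback squares in $\SSet$ are homotopy pullbacks.  Condition~(4) for $\ck$, applied to the $\lambda$-filtered diagrams $A-$ and $B-$ with targets $C$ and $E$, supplies equivalences of quasicategories at the three non-apex corners, and the equivalence at the apex follows.
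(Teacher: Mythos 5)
Your proof follows essentially the same route as the paper's: Conditions (1)--(2) by pointwise computation of powers and filtered colimits, Condition (3) via the same pullback square over $\ck^\two_0\times\ck^\two_0$ built from the lower horizontal and the induced map to the pullback, together with Lemma~\ref{lem:limits} and Proposition~\ref{prop:pb-acc}, and Condition (4) by exhibiting $\ck^\isf(D-,q)$ as a pullback of a cospan of presheaves whose image under $[\cj\op,\SSet](\Delta 1,-)$ and $[\cj\op,\SSet](Q,-)$ is analysed by a gluing argument.

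There is, however, one incorrect justification in your treatment of Condition (4). You assert that $\lim=[\cj\op,\SSet](\Delta 1,-)$ is a right Quillen functor for the projective model structure and hence preserves the fibration $\ck(A-,q)$. This is false in general: the left adjoint of $\lim$ is the constant-diagram functor, which does not send cofibrations of $\SSet$ to projective cofibrations (indeed $\Delta 1$ itself is typically not projectively cofibrant), so $\lim$ over a general $\lambda$-filtered index category need not preserve fibrations. By contrast, your claim for $[\cj\op,\SSet](Q,-)$ is correct, precisely because $Q$ is projectively cofibrant and the projective structure is enriched. The fact you actually need --- that $[\cj\op,\SSet](\Delta 1,\ck(A-,q))$ is an isofibration between quasicategories --- is nevertheless true, for a different reason: there is an isomorphism $[\cj\op,\SSet](\Delta 1,\ck(A-,C))\cong\ck(\colim A,C)$, so the map in question is $\ck(\colim A,q)$, which is an isofibration by the $\infty$-cosmos axioms. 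This is exactly how the paper argues at the corresponding point. With this one-line repair your proof is complete and coincides with the paper's.
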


\proof
The fact that $\ck^\isf_0$ and the inclusion $\ck^{\isf}_0 \to \ck^{\atwo}_0$ are accessible is part of the definition
of $\ck$ being an accessible $\infty$-cosmos. Accessibility of the power functors holds because it holds in $\ck$, and powers and
sufficiently-filtered colimits in $\ck^\isf$ are computed pointwise.

Next, we need to show that $(\ck^\isf)^\isf_0$ is accessible and
accessibly embedded in $(\ck^\isf)^\two_0$. To see this, consider the pullback
\begin{equation*}
\xymatrix{
      (\ck^\isf)^\isf_0 \ar[r] \ar[d] & (\ck^\isf)^\two_0 \ar[d]^{(lh,pb)} \\
      \ck^\isf_0 \x \ck^\isf_0 \ar@{>>}[r] & \ck^\two_0 \x \ck^\two_0 }
 \end{equation*}
  in which the horizontal maps are the inclusions, and
  $(lh,pb)$  
  is the map sending a commutative square \eqref{eq:fib-fib}
  to the pair consisting of the lower horizontal $B\to B'$ and the  induced map $A\to B\x_{B'}A'$. 
  
The bottom leg of the pullback square is a product of two copies
of the accessible isofibration of categories $\ck^\isf_0\twoheadrightarrow
\ck^\two_0$ , and so is an 
accessible isofibration of categories.  Therefore, the claim will follow from Proposition~\ref{prop:pb-acc} if we can show that the right leg 
$$(lh,pb)\colon (\ck^\isf)^\two_0 \to (\ck)^\two_0 \times (\ck)^\two_0$$
 is accessible.  Using Lemma~\ref{lem:limits} we choose $\lambda$ such
 that, first, $\ck^\isf_0\to\ck^\two_0$ is closed under
 $\lambda$-filtered colimits; and second, pullbacks of isofibrations
 commute with $\lambda$-filtered colimits in $\ck$.  The first
 assumption ensures that $lh$ preserves $\lambda$-filtered colimits
 whilst the second assumption ensures that $pb$ does so too; hence so does $(lh,pb)$.
 
In addition to the above properties of $\lambda$, we now further
assume that $\lambda$-filtered colimits are homotopy colimits in
$\ck$.  We will show that the same property holds in $\ck^\isf$.  To
this end, let $\cc$ be $\lambda$-filtered and consider
$S\colon\cc \to \ck^\isf$, and let $Q \to \Delta 1 \in [\cc\op,\SSet]$ be a projective cofibrant replacement.  We must prove that 
\begin{equation}\label{eq:local1}
\xymatrix{
p^*\colon[\cc\op,\SSet](\Delta 1,\ck^{\isf}(S-,A)) \to [\cc\op,\SSet](Q,\ck^{\isf}(S-,A))
}
\end{equation} is an equivalence of quasicategories.  Now $S$ is
specified by its source and target components $S_0,S_1\colon\cc \to
\ck$ plus a natural pointwise isofibration $s\colon S_0 \to
S_1$, whilst $A$ is a single isofibration $a\colon A_0 \to A_1$, and by definition of $\ck^{\isf}$ we have a pullback square
\begin{equation*}
\xymatrix{
\ck^{\isf}(S-,A) \ar[r] \ar[d]  &\ck(S{_0}-,A_0) \ar[d]^{a_*} \\
\ck(S{_1}-,A_1) \ar[r]_{d^*} & \ck(S{_0}-,A_1) }
\end{equation*}
in $[\cc\op,\SSet]$ whose right leg is a pointwise isofibration.  Both the representables $[\cc\op,\SSet](\Delta 1,-)$ and $[\cc\op,\SSet](Q,-)$ preserve pullbacks, so that \eqref{eq:local1} is in fact the unique induced map between the pullbacks of the two horizontal rows below.

\begin{equation}\label{eq:spanmap}
\xymatrix{
(\Delta 1,\ck(S{_1}-,A_1)) \ar[d]^{p^{*}} \ar[r]^{(\Delta 1,d^*)} & (\Delta 1,\ck(S{_0}-,A_1))  \ar[d]^{p^{*}} & (\Delta 1,\ck(S{_0}-,A_0)) \ar[d]^{p^{*}} \ar@{>>}[l]_{(\Delta 1,a_*)} \\
(Q,\ck(S{_1}-,A_1)) \ar[r]_{(Q,d^*)} & (Q,\ck(S{_0}-,A_1))  & (Q,\ck(S{_0}-,A_0)) \ar@{>>}[l]^{(Q,a_*)} }
\end{equation}

Let us first observe that since $\lambda$-filtered colimits are homotopy colimits in $\ck$, each of the three vertical morphisms is an equivalence of quasicategories --- in particular, all of the objects in the diagram are fibrant in the Joyal model structure. Therefore, to prove that the induced map between the pullbacks is an equivalence of quasicategories, it will suffice by  Proposition C.1.13 of \cite{elements} to show that the two left-pointing morphisms are isofibrations.

Examining first the lower left-pointing morphism, observe that since $Q$ is projectively cofibrant and  $a_*\colon \ck(S{_0}-,A_1) \to \ck(S{_0}-,A_0)$ is a pointwise isofibration, it follows that $(Q,a_*)$ is an isofibration, as required.  The upper left-pointing morphism $(\Delta 1,a_*)$ is isomorphic to $\ck(\colim S_0,a):\ck(\colim S{_0},A_0) \to \ck(\colim S{_0},A_1)$, which is an isofibration since $a$ is one, completing the proof.
\endproof

\subsection{Slice constructions}

Slice categories can be a very convenient tool for expressing various
universal properties. This remains true in the
$\infty$-cosmos setting, but here the slice construction is based on
isofibrations with given codomain rather than arbitrary morphisms.

For a simplicially enriched category $\ck$ and an object $A\in\ck$, we
write $\ck\downarrow A$ for the enriched slice category: an object is
a morphism $p\colon B\to A$, while if $q\colon C\to A$ is also an
object then the corresponding hom is given by the pullback
\[ \xymatrix{
    (\ck\downarrow A)(p,q) \ar[r] \ar[d] & \ck(B,C) \ar[d]^{\ck(B,q)}
    \\
    1 \ar[r]_-{p} & \ck(B,A). } \]
In particular, a morphism in $\ck\downarrow A$ is just a commutative triangle.
If now $\ck$ is an $\infty$-cosmos, we write $\ck_{/A}$ for the full
subcategory of $\ck\downarrow A$ consisting of those $p\colon B\to A$
which are isofibrations. This $\ck_{/A}$ is also an $\infty$-cosmos
\cite[Proposition~1.2.22]{elements} on defining a morphism from
$p\colon B\twoheadrightarrow A$ to $q\colon C\twoheadrightarrow A$ to be an isofibration in
$\ck_{/A}$ just when the corresponding morphism $B\to C$ is one in $\ck$.

\begin{proposition}\label{prop:acc-slice}
  If $\ck$ is an accessible $\infty$-cosmos then so is $\ck_{/A}$ for
  each $A\in\ck$, and the inclusion $\ck_{/A}\to \ck\downarrow A$ is
  an accessible functor. 
\end{proposition}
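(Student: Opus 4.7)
The plan is to verify the four conditions of Definition~\ref{defn:main} for $\ck_{/A}$, exploiting the fact that $(\ck_{/A})_0$ sits inside the ordinary slice $\ck_0/A$ as the replete full subcategory determined by those $p\colon B\to A$ which are isofibrations in $\ck$.

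For condition (1), together with accessibility of the inclusion $\ck_{/A}\hookrightarrow \ck\downarrow A$, I would form the pullback
\[ \xymatrix{
    (\ck_{/A})_0 \ar[r] \ar[d] & \ck^\isf_0 \ar@{>>}[d] \\
    \ck_0/A \ar[r]_-{F} & \ck^\two_0
} \]
in which $F$ sends an object $p\colon B\to A$ to the arrow $p$, and a triangle over $A$ to the square with right leg $\id_A$. The category $\ck_0/A$ is accessible by the Makkai-Par\'e limit theorem, the functor $F$ is accessible (filtered colimits on both sides are formed from $\ck_0$ on domains, with a constant codomain), and the right-hand vertical is an accessibly embedded replete full subcategory inclusion, hence an accessible isofibration of categories by condition (3) for $\ck$. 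Proposition~\ref{prop:pb-acc} then furnishes accessibility of $(\ck_{/A})_0$ and of both projections. An exactly analogous pullback square, with $\ck_0/A$ replaced by $(\ck_{/A})^\two_0$ and $F$ by the functor sending a triangle over $A$ to its underlying arrow in $\ck$, yields condition (3).

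For condition (2), I would use the standard formula whereby the power of $p\colon B\twoheadrightarrow A$ by a simplicial set $X$ in $\ck_{/A}$ is the pullback of $X\pitchfork p\colon X\pitchfork B \twoheadrightarrow X\pitchfork A$ along the diagonal $\Delta\colon A\to X\pitchfork A$ (cf.~\cite[Proposition~1.2.22]{elements}). Combining accessibility of $X\pitchfork -$ in $\ck$ (condition (2) for $\ck$) with Lemma~\ref{lem:limits} --- pullbacks of isofibrations commute with sufficiently filtered colimits --- and the fact that, for $\lambda$ sufficiently large, $\lambda$-filtered colimits in $(\ck_{/A})_0$ reduce to those in $\ck_0$ on domains, one concludes that $(X\pitchfork_A -)_0$ is an accessible functor.

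The main obstacle is condition (4), for which I would follow the template of Proposition~\ref{prop:fib-acc}. Given a $\lambda$-filtered diagram $D\colon \cc\to \ck_{/A}$ with components $d_c\colon D_c\twoheadrightarrow A$ and a test object $x\colon X_0\twoheadrightarrow A$, the functor $\ck_{/A}(D-,X)\in [\cc\op,\SSet]$ is computed as the pullback of the pointwise isofibration $x_*\colon \ck(D-,X_0)\twoheadrightarrow \ck(D-,A)$ along the constant map $1\to \ck(D-,A)$ picking out the $d_c$. Applying the representables $(\Delta 1,-)$ and $(Q,-)$ for a projective cofibrant replacement $p\colon Q\to \Delta 1$, and using that these preserve pullbacks, yields two pullback squares in $\SSet$ linked by $p^*$. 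Condition (4) for $\ck$ ensures that both components $p^*\colon (\Delta 1, \ck(D-,Y))\to (Q,\ck(D-,Y))$ are equivalences of quasicategories, for $Y=X_0$ and $Y=A$. Since $x_*$ is a pointwise isofibration of quasicategories by axiom (2) of an $\infty$-cosmos, both copies of its image under $(\Delta 1,-)$ and $(Q,-)$ remain isofibrations, the latter because $Q$ is projectively cofibrant and powering against a projective cofibrant object sends pointwise fibrations to fibrations. Proposition~C.1.13 of \cite{elements} then shows that the induced map $p^*\colon (\Delta 1,\ck_{/A}(D-,X))\to (Q,\ck_{/A}(D-,X))$ is an equivalence of quasicategories, establishing condition (4).
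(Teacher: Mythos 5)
Your proposal is correct and follows essentially the same strategy as the paper: each condition is verified via pullbacks of accessible categories and Proposition~\ref{prop:pb-acc}, with Condition (4) handled by the same span argument as in Proposition~\ref{prop:fib-acc}, one leg of the cospan collapsing to $1$. The only (harmless) variations are that you verify Condition (2) in the power-functor form of Proposition~\ref{prop:enr-acc} rather than the hom-functor form, and realize Condition (3) by pulling back $\ck^\isf_0\to\ck^\two_0$ along $(\ck_{/A})^\two_0\to\ck^\two_0$ rather than pulling back $(\ck^\isf)^\isf_0\to(\ck^\isf)^\two_0$ as the paper does.
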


\proof
In the pullback below left
 \[ \xymatrix{
    (\ck_{/A})_0 \ar[r]^{I_{0}} \ar[d] & \ck^\isf_0 \ar@{>>}[d]^{\cod_0} \\
    1 \ar[r]_-A & \ck_0 } \quad
    \xymatrix{
  (\ck_{/A})^\isf_0 \ar[r] \ar[d] &  (\ck^\isf)^\isf_0 \ar@{>>}[d]^{}
  \\ (\ck_{/A})^\atwo_0 \ar[r]_-{I^{\atwo}_0} & (\ck^\isf)^\atwo_0 } \]
the right vertical is an accessible isofibration between accessible
categories, and the lower horizontal an accessible functor between
accessible categories. It follows by Proposition~\ref{prop:pb-acc}
that $(\ck_{/A})_0$ is accessible and $I_0$ an accessible functor.

Now consider the pullback above right.  Since $I_0$ is accessible, the Makkai-Par\'e Limit Theorem ensures that the lower horizontal $I_0^{\atwo}$ is also accessible.  The right vertical inclusion is an accessible functor between accessible categories by Proposition~\ref{prop:fib-acc} and moreover an isofibration.  Hence, by Proposition~\ref{prop:pb-acc} once again, the left vertical is an accessible functor between accessible categories, verifying Condition~\eqref{item:isofibs-accessible} in the definition of accessible $\infty$-cosmos.

For an object $p\colon B\to A$ of $\ck_{/A}$, the corresponding hom-functor
$\ck_{/A}((B,p),-)\colon\ck_{/A}\to\SSet$ can be constructed as a
pullback
\[ \xymatrix{
    \ck_{/A}((B,p),-) \ar[r] \ar[d] & \ck(B,\dom-) \ar[d] \\
    1 \ar[r]_-p & \ck(B,A) } \]
of functors $\ck_{/A}\to\SSet$, where the objects $1$ and $\ck(B,A)$
are seen as constant functors, while $\ck(B,\dom-)$ is the functor
sending $q\colon C\to A$ to $\ck(B,C)$, and the right vertical has
component at $q\colon C\to A$ given by
$\ck(B,q)\colon\ck(B,C)\to\ck(B,A)$. This is a pullback of accessible
functors, so is itself accessible, since pullbacks commute with
$\lambda$-filtered colimits in $\SSet$, for any infinite 
cardinal $\lambda$.  It follows by
Proposition~\ref{prop:enr-acc} that  Condition (1) in the definition of
accessible $\infty$-cosmos holds. 

The verification of Condition~\eqref{item:hty-colims} is identical in form to the corresponding verification in the proof of Proposition~\ref{prop:fib-acc},  the main difference is that the left vertical in \eqref{eq:spanmap} is replaced by the identity $1 \to 1$.

Finally, accessibility of the inclusion follows from the fact that
there is a pullback
\[ \xymatrix{
    (\ck_{/A})_0 \ar[r] \ar[d] & (\ck^\isf)_0 \ar@{>>}[d] \\ (\ck\downarrow A)_0 \ar[r]
    & (\ck^\two)_0 } \]
of accessible categories and accessible functors, in which the right
vertical is an isofibration.
\endproof

Later on, we will use the following simple result in our applications, and so record it now.

\begin{proposition}\label{prop:F/A}
  If $F\colon\cl\to\ck$ is an accessible cosmological functor, then
  so is the induced $F_{/A}\colon \cl_{/A}\to\ck_{/FA}$ for any $A\in\cl$.
\end{proposition}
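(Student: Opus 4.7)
The plan is to reduce accessibility of $F_{/A}$ to accessibility of $F$ itself, using the pullback presentation of the slice from the proof of Proposition~\ref{prop:acc-slice}. That $F_{/A}$ is cosmological is routine, since cosmological limits in $\cl_{/A}$ are built out of cosmological limits in $\cl$, which $F$ preserves, as are isofibrations. So the content of the proposition lies in showing that the underlying functor $(F_{/A})_0$ is accessible.

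The key is to choose a regular cardinal $\lambda$ such that (i) $F_0$ preserves $\lambda$-filtered colimits, using the hypothesis that $F$ is accessible; and (ii) both inclusions $\cl^{\isf}_0 \hookrightarrow \cl^{\two}_0$ and $\ck^{\isf}_0 \hookrightarrow \ck^{\two}_0$ are closed under $\lambda$-filtered colimits, using Condition~(3) of Definition~\ref{defn:main}. Since $\lambda$-filtered colimits in arrow categories are computed pointwise and $F_0$ preserves them, $F^{\two}_0$ preserves $\lambda$-filtered colimits; combined with (ii), the restriction $F^{\isf}_0\colon\cl^{\isf}_0\to\ck^{\isf}_0$ preserves them as well.

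Finally, I would invoke the pullback presentation
\[
\xymatrix{
  (\cl_{/A})_0 \ar[r] \ar[d] & \cl^{\isf}_0 \ar@{>>}[d]^{\cod} \\
  1 \ar[r]_-{A} & \cl_0
}
\]
and its analogue for $(\ck_{/FA})_0$; the functor $F_{/A}$ is induced between these pullbacks by $F^{\isf}_0$ on the upper edge, the identity on $1$, and $F_0$ on the lower edge. A $\lambda$-filtered colimit in $(\cl_{/A})_0$ is computed by the corresponding $\lambda$-filtered colimit in $\cl^{\isf}_0$, which lies in the fibre over $A$ because its image under $\cod$ is the constant $\lambda$-filtered colimit on $A$. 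Applying $F_{/A}$ agrees with applying $F^{\isf}_0$ on the upper projection, and the latter preserves the colimit; therefore $F_{/A}$ preserves $\lambda$-filtered colimits. No substantial obstacle arises — the argument is essentially bookkeeping around the choice of $\lambda$, with the interaction between slicing and filtered colimits being immediate since the terminal category contributes nothing to the colimit computation.
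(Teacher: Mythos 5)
Your proof is correct, but it takes a genuinely different route from the paper's. The paper factors $F_{/A}$ through the fully faithful accessible inclusions $\cl_{/A}\to\cl\downarrow A$ and $\ck_{/FA}\to\ck\downarrow FA$ of Proposition~\ref{prop:acc-slice}, and observes that $F\downarrow A$ is accessible because it is the induced map between comma objects in the $2$-category of accessible categories and accessible functors (an instance of the Makkai--Par\'e Limit Theorem); accessibility of $F_{/A}$ then follows by restricting along the fully faithful right-hand inclusion. You instead work with the pullback presentation over $\cod\colon\cl^{\isf}_0\twoheadrightarrow\cl_0$, deduce that $F^{\isf}_0$ preserves $\lambda$-filtered colimits from Condition~(3) of Definition~\ref{defn:main} together with the pointwise computation of colimits in arrow categories, and then note that $\lambda$-filtered colimits in the strict fibre over $A$ are created by the (non-full) inclusion into $\cl^{\isf}_0$, because a filtered category is connected, so the constant diagram at $A$ has colimit $A$ with identity cocone legs. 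Both arguments are sound. The paper's is shorter because it reuses machinery already in place; yours is more elementary and keeps explicit control of the index of accessibility (a single $\lambda$ chosen for $F$, for the accessible embeddings of the isofibrations, and for pointwise colimits already works for $F_{/A}$), rather than appealing to a bilimit theorem that a priori only yields preservation at some possibly larger cardinal. The one point worth spelling out in your version is the creation claim: the colimit cocone computed in $\cl^{\isf}_0$ has codomain components equal to the legs of the colimit cocone of the constant diagram at $A$, hence identities, and the comparison map induced by any cocone lying in the fibre likewise has identity codomain component; this is routine, but it is precisely where connectedness of filtered categories is used, and it is what lets you conclude both that the colimit exists in $(\cl_{/A})_0$ and that the top projection preserves and reflects it.
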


\proof
There is a commutative square
\[ \xymatrix{
    \cl_{/A} \ar[r]^-{F_{/A}} \ar[d] & \ck_{/FA} \ar[d] \\
    \cl\downarrow A \ar[r]_-{F\downarrow A} & \ck\downarrow FA } \]
in which the vertical maps are the fully faithful inclusions. These
are accessible by Proposition~\ref{prop:acc-slice}, while $F\downarrow
A$ is so since it is the induced map between comma-categories (or comma objects)
in the $2$-category of accessible categories and accessible functors. Thus $F_{/A}$
is also accessible.
\endproof

\subsection{Dual $\infty$-cosmoi}
 
As described in Definition~1.2.25 of \cite{elements}, each $\infty$-cosmos $\ck$ has a dual $\infty$-cosmos $\ck\co$.  This has the same underlying category as $\ck$, with simplicial
homs given by $\ck\co(A,B)=\ck(A,B)\op$, and with the same
isofibrations as in $\ck$.  Powers in $\ck\co$ by $X$ are given by powers in
$\ck$ by the opposite simplicial set $X\op$.  

\begin{proposition}\label{prop:duality}
  If $\ck$ is an accessible $\infty$-cosmos, so is its dual $\ck\co$.
\end{proposition}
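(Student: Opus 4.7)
The plan is to verify the four conditions of Definition~\ref{defn:main} in turn for $\ck\co$. The first three are immediate: $(\ck\co)_0 = \ck_0$ is accessible; the power functor $(X\pitchfork^{\ck\co}-)_0$ equals $(X\op\pitchfork^\ck-)_0$ and so is accessible by condition~(2) for $\ck$ applied to the simplicial set $X\op$; and since $\ck$ and $\ck\co$ share both their underlying category and their isofibrations, $(\ck\co)^\isf_0 = \ck^\isf_0$ sits inside $(\ck\co)^\atwo_0 = \ck^\atwo_0$ exactly as for $\ck$, verifying condition~(3).

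The substance lies in condition~(4). Choose a regular cardinal $\lambda$ witnessing condition~(4) for $\ck$. Since underlying categories agree, $\lambda$-filtered colimits exist in $\ck\co$. Given a $\lambda$-filtered $\cc$, a diagram $S\colon\cc\to\ck\co$, an object $A$ and a projective cofibrant replacement $p\colon Q\to\Delta 1$ in $[\cc\op,\SSet]$, the homotopy colimit condition for $\ck\co$ demands that
\[ p^*\colon [\cc\op,\SSet](\Delta 1,\ck(S-,A)\op)\to [\cc\op,\SSet](Q,\ck(S-,A)\op) \]
be an equivalence of quasicategories, where we have used that $\ck\co(-,-) = \ck(-,-)\op$.

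The engine of the argument is that $(-)\op\colon\SSet\to\SSet$ is a strong-monoidal automorphism of the Joyal model structure. A Yoneda argument gives a natural isomorphism $(Y\op)^{X\op}\cong(Y^X)\op$ of internal homs --- both sides represent the functor $Z\mapsto\SSet(Z\op,Y^X)$ --- and an end calculation upgrades this to the natural isomorphism $[\cc\op,\SSet](W\op,G\op)\cong [\cc\op,\SSet](W,G)\op$. Postcomposition with $(-)\op$ is itself an automorphism of $[\cc\op,\SSet]$ preserving the projective Joyal model structure, so $p\op\colon Q\op\to\Delta 1$ is another projective cofibrant replacement. Writing $F = \ck(S-,A)$, the natural isomorphism identifies the map $(p\op)^*\colon [\cc\op,\SSet](\Delta 1, F\op)\to [\cc\op,\SSet](Q\op, F\op)$ with $(-)\op$ applied to $p^*\colon[\cc\op,\SSet](\Delta 1,F)\to[\cc\op,\SSet](Q,F)$; the latter is an equivalence of quasicategories by condition~(4) for $\ck$, and $(-)\op$ preserves such equivalences. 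Since $\ck\co$ is locally fibrant, the homotopy colimit property is independent of the choice of cofibrant replacement, so the equivalence for $Q\op$ yields the desired equivalence for $Q$.

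The only mild technicality is the natural isomorphism $(Y\op)^{X\op}\cong(Y^X)\op$ of internal hom simplicial sets, which falls out routinely from the strong monoidality of $(-)\op$ via the Yoneda argument sketched above; everything else is formal manipulation once one recognises that $(-)\op$ is a self-equivalence of $\SSet$ at every relevant level of structure.
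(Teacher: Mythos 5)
Your proposal is correct and follows essentially the same route as the paper: conditions (1)--(3) are immediate, and condition (4) is verified via the levelwise involution $(-)\op$ on $[\cc\op,\SSet]$, the natural isomorphism $[\cc\op,\SSet](W\op,G\op)\cong[\cc\op,\SSet](W,G)\op$, the fact that opposites of equivalences of quasicategories are equivalences, and the observation that $(-)\op$ preserves projective cofibrant replacements, so that $p\op\colon Q\op\to\Delta 1$ is again one. The only cosmetic difference is that you invoke independence of the choice of cofibrant replacement explicitly at the end, whereas the paper uses it implicitly by working with the replacement $Q\op$ directly.
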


\proof
The only condition left to be verified is Condition~\eqref{item:hty-colims}, for which purpose we will investigate weighted colimits in $\ck$.  Let  $W\colon \cc\op \to \SSet$ a weight and $S\colon \cc \to \ck$ a diagram, which corresponds to a diagram $S\co \colon \cc\co \to \ck\co$.  For simplicity, let us suppose that $\cc$ is merely a category, so that $\cc\co = \cc$.  Applying the involution $(-)\op\colon \SSet \to \SSet$ levelwise gives an involution $(-)\op \colon [\cc\op,\SSet]_0 \to [\cc\op,\SSet]_0$.  We then have an isomorphism
$$\phi_{W,A}\colon [\cc\op,\SSet](W,\ck(S-,A))\op \cong
[\cc\op,\SSet](W\op,\ck\co(S\co-,A))$$ natural in $W$ and $A$.
Suppose now that $\cc$ is $\lambda$-filtered, that  $\lambda$-filtered colimits exist and are homotopy colimits in
$\ck$, and that $p\colon Q \to \Delta 1$ is a cofibrant replacement with $p$ a trivial fibration.  By the above, we have a commuting square
\begin{equation*}
\xymatrix @C2.5pc {
[\cc\op,\SSet](\Delta 1,\ck(S-,A))\op \ar[d]_{\phi_{\Delta 1,A}} \ar[r]^{{(p^{*})}\op} & [\cc\op,\SSet](Q ,\ck(S-,A))\op \ar[d]^{\phi_{Q,A}} \\
[\cc\op,\SSet](\Delta 1\op,\ck\co(S\co-,A)) \ar[r]^{({p\op)}^*} & [\cc\op,\SSet](Q\op,\ck\co(S\co-,A))
}
\end{equation*}
with vertical maps isomorphisms.  The upper horizontal is an
equivalence since the opposite of an equivalence is an equivalence, so
that the lower horizontal is one too.  Since this is induced by precomposition with $p\op\colon Q\op \to \Delta 1\op = \Delta 1$, we will have verified Condition~\eqref{item:hty-colims} if we can show that this is a cofibrant replacement of $\Delta 1$.  Indeed, this follows easily from the fact that the involution $(-)\op \colon [\cc\op,\SSet]_0 \to [\cc\op,\SSet]_0$ leaves the projective trivial fibrations unchanged, and so leaves the projectively cofibrant objects unchanged too. 
\endproof 

\subsection{Cosmological embeddings}\label{sect:embeddings}

Let $\ck$ be an $\infty$-cosmos. A (not necessarily full) simplicial
subcategory $\cl$ of $\ck$ is said to be {\em replete} \cite[Definition~6.3.1]{elements}
 if:
\begin{enumerate}
\item each object of $\ck$ equivalent to one in $\cl$ belongs to
  $\cl$;
\item each equivalence in $\ck$ between objects of $\cl$ belongs to
  $\cl$;
\item each $0$-arrow of $\ck$ isomorphic  to one in $\cl$ belongs to
  $\cl$;
\item the inclusion is full on positive-dimensional arrows.
\end{enumerate}
We also say that the inclusion $\cl\to\ck$ is replete.

\begin{remark}
  In the presence of  Condition (4), the other conditions amount to the
  fact that the 2-functor $h\cl\to h\ck$ is a fibration for the model
  structure of \cite{qm2cat} for 2-categories, there called an equiv-fibration.
\end{remark}

If moreover $\cl$ is closed in $\ck$ under cosmological limits, then it
can be made into an $\infty$-cosmos \cite[Proposition~6.3.3]{elements}
by defining a morphism in $\cl$ to
be an isofibration if and only if it is one in $\ck$. The inclusion
$\cl\to\ck$ is then said to be a {\em cosmological embedding}. 

\begin{lemma}\label{lem:fibration}
If $J\colon\cl \hookrightarrow \ck$ is a cosmological embedding, then each $J_{X,Y}\colon\cl(X,Y) \to \ck(X,Y)$ is an isofibration of quasicategories.
\end{lemma}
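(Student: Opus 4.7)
The plan is to check that $J_{X,Y}\colon\cl(X,Y)\to\ck(X,Y)$ has the right lifting property against the two families of maps characterising isofibrations of quasicategories as described in Section~\ref{sect:joyal}: the inner horn inclusions $\Lambda^n_k\hookrightarrow\Delta^n$ for $0<k<n$, and the endpoint inclusion $1\to\iso$. That both $\cl(X,Y)$ and $\ck(X,Y)$ are quasicategories is already part of the definition of an $\infty$-cosmos, so the isofibration question is well posed.

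For inner horn lifting, consider $\Lambda^n_k\to\cl(X,Y)$ together with a filling $\Delta^n\to\ck(X,Y)$ after composition with $J_{X,Y}$. The filling is a simplex of $\ck(X,Y)$ of dimension $n\geq 2$, and by Condition (4) of repleteness --- that $\cl\hookrightarrow\ck$ is full on positive-dimensional arrows --- this simplex already belongs to $\cl(X,Y)$, providing the required lift.

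For $1\to\iso$ lifting, consider a vertex $f$ of $\cl(X,Y)$ together with an extension $\iso\to\ck(X,Y)$ of $J_{X,Y}(f)$. The vertex at $1\in\iso$ is a $0$-arrow $g\colon X\to Y$ of $\ck$ which, via the image of the edge $0\to 1$ of $\iso$, is isomorphic to $f$ in the hom-category of the homotopy $2$-category $h\ck$. Condition (3) of repleteness then forces $g$ to lie in $\cl$, so that $g$ is a vertex of $\cl(X,Y)$. Every positive-dimensional simplex of $\iso$ lands automatically in $\cl(X,Y)$ by Condition (4), giving the desired factorisation $\iso\to\cl(X,Y)$.

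Each step reduces to a single clause in the definition of a replete subcategory, so I do not anticipate any real obstacle; the proof amounts to a direct matching of the definition of cosmological embedding against the standard characterisation of isofibrations between quasicategories.
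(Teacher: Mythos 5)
Your proof is correct and follows essentially the same route as the paper's: reduce to lifting against inner horns and $1\to\iso$, and use clauses (3) and (4) of repleteness. The one point worth making explicit in the inner-horn step is \emph{why} fullness on positive-dimensional arrows applies to the filler: the inner horn inclusions are bijective on vertices, so all vertices of the filling $n$-simplex already lie in $\cl(X,Y)$ --- this is exactly the observation the paper records, and without it Condition (4) alone does not place the filler in $\cl(X,Y)$.
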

\begin{proof}
We must show that the $J_{X,Y}$ have the right lifting property with
respect to the inner horn inclusions and the inclusion $1 \to \iso$,
where $\iso$ denotes the (nerve of the) free-living isomorphism. 
The inner horn inclusions are bijective on vertices, while $J_{X,Y}$
is fully faithful on positive dimensional arrows, and these two
classes are orthogonal, so there are in fact unique liftings.
As for $1\to\iso$, we have a lifting problem
\begin{equation*}
\xymatrix{
1 \ar[d] \ar[r]^-{f} & \cl(X,Y) \ar[d]^{J_{X,Y}} \\
\iso \ar[r]^{} & \ck(X,Y)}
\end{equation*}
which amounts to giving a $0$-arrow $f \in \cl(X,Y)$ and a map $\iso
\to \ck(X,Y)$ which sends the isomorphism $0 \cong 1$ to an
isomorphism $f \to g$ in $\ck(X,Y)$; then by Condition (3) in the
definition of repleteness it follows that also $g\in\cl(X,Y)$. This shows that $\iso \to \ck(X,Y)$ factorizes through $\cl(X,Y)$ on $0$-simplices; since $J_{X.Y}$ is full on positive dimensional simplices it factorizes in all dimensions, as required.
\end{proof}

\begin{lemma}\label{lem:limitsarehomotopy}
Suppose that $\cl$ and $\ck$ are locally fibrant simplicially enriched
categories, and that $\cl \hookrightarrow \ck$ has each $\cl(X,Y) \to
\ck(FX,FY)$ an isofibration, injective in each dimension.  If
$\cl$ and $\ck$ have $W$-weighted colimits and they are homotopy
colimits in $\ck$, then they are also homotopy colimits in
$\cl$. There is also a dual result involving limits. 
\end{lemma}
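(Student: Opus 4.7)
The plan is to verify the homotopy colimit condition in $\cl$ directly by transporting the analogous condition from $\ck$ across the inclusion, and then to obtain the limits statement by duality. Fix a diagram $S\colon\cc\to\cl$, an object $A\in\cl$, and a projective cofibrant replacement $p\colon Q\to W$ in $[\cc\op,\SSet]$. Write $X=\cl(S-,A)$ and $Y=\ck(JS-,JA)$. The induced comparison $J_*\colon X\to Y$ is pointwise both a Joyal isofibration between quasicategories and a monomorphism by hypothesis, and applying enriched homs out of $W$ and $Q$ yields the commutative square
\[
\xymatrix{
(W,X) \ar[r]^{p^*} \ar[d]_{J_*} & (Q,X) \ar[d]^{J_*} \\
(W,Y) \ar[r]_{p^*} & (Q,Y)
}
\]
whose bottom row is an equivalence of quasicategories by the homotopy colimit hypothesis on $\ck$ applied to the diagram $JS$ and the object $JA$.

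The key technical step is to show that this square is a pullback in $\SSet$. Because $p$ is a projective Joyal trivial fibration and Joyal cofibrations coincide with monomorphisms, each component $p(c)\colon Q(c)\to W(c)$ is surjective in every simplicial degree. For any pair $(\alpha,\beta)$ with $\alpha\colon W\otimes\Delta^n\to Y$ and $\beta\colon Q\otimes\Delta^n\to X$ in $[\cc\op,\SSet]$ satisfying $\alpha\circ(p\otimes\Delta^n)=J_*\circ\beta$, surjectivity of $p$ combined with pointwise monicity of $J_*$ forces each $\alpha(c)$ to factor uniquely through the monomorphism $J_*(c)$; this produces a unique natural $\gamma\colon W\otimes\Delta^n\to X$ with $\alpha=J_*\circ\gamma$, witnessing the required bijection between $n$-simplices of $(W,X)$ and of the pullback.

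With the pullback in place the remaining considerations are formal. Since $Q$ is projectively cofibrant, $(Q,-)$ is right Quillen from the projective Joyal structure on $[\cc\op,\SSet]$ to the Joyal structure on $\SSet$, and therefore sends the pointwise fibration $J_*$ to an isofibration of quasicategories $(Q,X)\twoheadrightarrow(Q,Y)$. The remaining corners $(W,X)$ and $(W,Y)$ are themselves quasicategories, being the $\cl$- and $\ck$-hom-objects out of the respective weighted colimits in the locally fibrant $\cl$ and $\ck$. An application of Proposition~C.1.13 of \cite{elements}, exactly as in the proof of Proposition~\ref{prop:fib-acc}, to the map of pullback spans $(W,Y)\to(Q,Y)\leftarrow(Q,X)$ and $(Q,Y)\to(Q,Y)\leftarrow(Q,X)$---with verticals the equivalence $p^*$ together with two identities, and with common left-pointing leg the isofibration above---then delivers the sought equivalence $(W,X)\to(Q,X)$. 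The dual statement for limits follows by running the same argument for $\cl\op\hookrightarrow\ck\op$, whose hypotheses are preserved. The only substantive step is the pullback verification, which is precisely where the monicity of $J$ and the trivial-fibration status of $p$ are genuinely used.
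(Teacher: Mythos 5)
Your proof is correct and follows essentially the same route as the paper's: the same $J_*$/$p^*$ square, the same epi--mono argument showing it is a pullback, and the same identification of $(Q,J_*)$ as an isofibration between fibrant objects. The only cosmetic difference is that you conclude via Proposition~C.1.13 of \cite{elements} where the paper invokes Lemma~A.2.4.3 of \cite{Lurie} (pullback of a weak equivalence between fibrant objects along a fibration between fibrant objects); these are interchangeable here.
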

\begin{proof}

Given $S\colon \cc \to \cl$, we can form $W * S \in \cl$.
For $p\colon Q \to W$ a cofibrant replacement, with $p$ a pointwise
trivial fibration, we will prove that $ p^*\colon[\cc\op,\SSet](\Delta 1,\cl(S-,A)) \to [\cc\op,\SSet](Q,\cl(S-,A))$ is a weak equivalence.  Consider the commutative square 

\begin{equation*}
\xymatrix{
[\cc\op,\SSet](W,\cl(S-,A)) \ar[d]_{{(J_{S-,A})}_*} \ar[rr]^{p^*} && \ar@{>>}[d]^{{(J_{S-,A})}_*} [\cc\op,\SSet](Q,\cl(S-,A))\\
[\cc\op,\SSet](W,\ck(JS-,JA)) \ar[rr]^{p^*} && [\cc\op,\SSet](Q,\ck(JS-,JA))}
\end{equation*}
in $\SSet$.  The map $J_{S-,A}\colon \cl(S-,A) \to \ck(JS-,JA)$ is a
 (pointwise) monomorphism and $p \colon Q \to W$ is a regular epimorphism
since it is a pointwise split epimorphism.  Thus by the (enriched) orthogonality of regular epimorphisms and monomorphisms, the above square is a pullback.

Since $J_{S-,A}\colon \cl(S-,A) \to \ck(JS-,JA)$ is a pointwise
isofibration between pointwise fibrant objects, and $Q$
is cofibrant in the projective model structure, it follows that the
right vertical arrow in the square is an isofibration of fibrant
objects.  Moreover, since the lower left object
$[\cc\op,\SSet](W,\ck(JS-,JA))$ is isomorphic to
$\ck(W * JS,JA)$, it is fibrant too, and the lower
horizontal is thus a weak equivalence of fibrant objects.  Now the
pullback of a weak equivalence between fibrant objects along a
fibration between fibrant objects is always a weak equivalence --- see Lemma A.2.4.3 of \cite{Lurie}, for example --- and so the upper horizontal is also a weak equivalence, as required.
\end{proof}

\begin{proposition}\label{prop:cosmo-accessible}
Suppose that $\cl \hookrightarrow \ck$ is a cosmological embedding
with $\ck$ an accessible $\infty$-cosmos.  If $\cl_0 \hookrightarrow
\ck_0$ is an accessible functor between accessible categories, then
$\cl$ is also an accessible $\infty$-cosmos in such a way  that the inclusion $\cl \hookrightarrow \ck$ is an accessible cosmological embedding.
\end{proposition}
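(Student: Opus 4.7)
The plan is to verify the four conditions of Definition~\ref{defn:main} for $\cl$ and then check accessibility of the inclusion. A single regular cardinal $\lambda$ will be chosen large enough to dominate the accessibility ranks appearing throughout.

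Condition~(1) is immediate from the hypothesis. For Condition~(3), since the isofibrations of $\cl$ are precisely the morphisms of $\cl$ that are isofibrations in $\ck$, we have a pullback square
\[
\xymatrix{
\cl^\isf_0 \ar[r] \ar[d] & \ck^\isf_0 \ar@{>>}[d] \\
\cl^\two_0 \ar[r] & \ck^\two_0
}
\]
whose right vertical is an accessible isofibration by Condition~(3) for $\ck$, and whose lower horizontal is accessible since $J_0$ is accessible (the Makkai--Par\'e Limit Theorem lifts accessibility to arrow-categories). Proposition~\ref{prop:pb-acc} then yields both accessibility of $\cl^\isf_0$ and accessibility of the left vertical embedding.

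For Condition~(2), fix $X\in\SSet$ and a $\lambda$-filtered diagram $D\colon\ci\to\cl_0$ with colimits $L=\colim D$ and $M=\colim(X\pitchfork D)$ in $\cl_0$ (both existing once $\lambda$ exceeds the rank of $\cl_0$). Because $J$ preserves powers, $J_0$ preserves $\lambda$-filtered colimits, and $(X\pitchfork-)_\ck$ does so too (Condition~(2) for $\ck$), it follows that $J_0$ sends the canonical comparison $\phi\colon M\to X\pitchfork L$ to an isomorphism in $\ck_0$. The key auxiliary fact is that $J_0$ reflects isomorphisms: if $Jf$ is an iso in $\ck_0$, its $\ck$-inverse is an equivalence in $\ck$ between objects of $\cl$, and so lies in $\cl$ by repleteness Condition~(2). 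Hence $\phi$ is an iso in $\cl_0$, giving accessibility of $(X\pitchfork-)_\cl$. For Condition~(4), we further enlarge $\lambda$ so that $\cl_0$ is closed in $\ck_0$ under $\lambda$-filtered colimits and $\lambda$-filtered colimits in $\ck$ are homotopy colimits; then $\cl$ has such colimits, computed as in $\ck$. Lemma~\ref{lem:limitsarehomotopy} applied to $J$ transfers the homotopy-colimit property from $\ck$ to $\cl$: its hypotheses hold because both $\cl$ and $\ck$ are locally fibrant, each $J_{X,Y}$ is an isofibration by Lemma~\ref{lem:fibration}, and $J_{X,Y}$ is levelwise injective since $\cl$ is a simplicial subcategory of $\ck$. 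Finally, $J$ is a cosmological functor by hypothesis with $J_0$ accessible by hypothesis, so $J$ is an accessible cosmological embedding.

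The main obstacle is Condition~(2): underlying-categorical accessibility of $J_0$ does not by itself force $(X\pitchfork-)$ to preserve filtered colimits in $\cl_0$, because $J_0$ is only faithful, not fully faithful, on $0$-arrows. What rescues the argument is the reflection-of-isomorphisms observation, which uses repleteness in an essential way.
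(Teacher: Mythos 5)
Your proof is correct and follows essentially the same route as the paper: Condition (3) via the same pullback square and Proposition~\ref{prop:pb-acc}, and Condition (4) by choosing $\lambda$ so that the inclusion preserves $\lambda$-filtered colimits and then applying Lemmas~\ref{lem:fibration} and~\ref{lem:limitsarehomotopy}. Your treatment of Condition (2) is in fact more careful than the paper's one-line assertion that powers and filtered colimits are ``calculated as in $\ck_0$'': since $J_0$ is faithful but not full, the observation that repleteness forces $J_0$ to reflect isomorphisms is a genuine detail worth recording.
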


\proof
The category $\cl_0$ is accessible by assumption.
Compatibility of powers and sufficiently filtered colimits holds in
$\cl_0$ since these are both calculated as in $\ck_0$.

By definition of the isofibrations in $\ck$, we have a pullback square
\[ \xymatrix{
    \cl^\isf_0 \ar[r] \ar[d] & \cl^{\atwo}_0 \ar[d] \\ 
    \ck^\isf_0 \ar@{>>}[r] & \ck^{\atwo}_0 } \]
of categories. 
The right vertical and lower horizontal are accessible
functors between accessible categories, and the lower horizontal is also an
isofibration, thus the upper horizontal is an accessible functor
between accessible categories by Proposition~\ref{prop:pb-acc}.

For Condition~\eqref{item:hty-colims}, let $\lambda$ be such that $\cl$ has $\lambda$-filtered colimits preserved by the inclusion to $\ck$ and such that $\lambda$-filtered colimits are homotopy colimits in $\ck$.  Then by Lemma~\ref{lem:limitsarehomotopy}, $\lambda$-filtered colimits are also homotopy colimits in $\cl$.
\endproof

\begin{proposition} \label{prop:cosmo}
  Suppose that $J\colon\cl\hookrightarrow\ck$ is an accessible
  cosmological embedding and $F\colon\ck'\to\ck$ is an accessible
  cosmological functor. Then in the pullback
  \[ \xymatrix{
      \cl' \ar[r]^-{G} \ar[d]_{J'} & \cl \ar[d]^{J} \\
      \ck' \ar[r]_-{F} & \ck } \]
  of simplicially enriched categories, $\cl'$ is an accessible
  $\infty$-cosmos, while
  $J'\colon\cl'\to\ck'$ is an accessible cosmological embedding, and
  $G\colon\cl'\to\cl$ is an accessible cosmological functor. 
\end{proposition}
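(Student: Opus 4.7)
The plan is to reduce the proposition to a combination of Proposition~\ref{prop:pb-acc} and Proposition~\ref{prop:cosmo-accessible}. Since the underlying-category functor on simplicially enriched categories preserves pullbacks, the given square descends to a pullback of underlying categories
\[
\xymatrix{
\cl'_0 \ar[r]^-{G_0} \ar[d]_{J'_0} & \cl_0 \ar[d]^{J_0} \\
\ck'_0 \ar[r]_-{F_0} & \ck_0,
}
\]
and $\cl'$ itself can be identified with the simplicial subcategory of $\ck'$ whose objects $X'$ satisfy $FX' \in \cl$ and whose simplices $F$-map into $\cl$.

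First I would verify that $J'\colon\cl' \hookrightarrow \ck'$ is a cosmological embedding. Because the cosmological functor $F$ preserves equivalences, cosmological limits, and the homotopy data on hom quasicategories, each of the four repleteness conditions on $J'$ follows directly from the corresponding condition on $J$: for instance, if $f\colon X' \to Y'$ is an equivalence in $\ck'$ between objects of $\cl'$, then $Ff$ is an equivalence in $\ck$ between objects of $\cl$, hence $Ff \in \cl$ by repleteness of $J$, so $f \in \cl'$. Closure of $\cl'$ under cosmological limits in $\ck'$ is argued in the same way, using that $F$ preserves them and $\cl$ is closed under them.

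For accessibility, repleteness of $J$ makes $J_0$ an isofibration of categories: given an iso $f\colon X \to JY$ in $\ck_0$, condition (1) places $X$ in $\cl_0$, and condition (2), applied to $f$ viewed as an equivalence between objects of $\cl$, then lifts $f$ through $J_0$. Since $J_0$ is also accessible between accessible categories by hypothesis, and $F_0$ is accessible between accessible categories, Proposition~\ref{prop:pb-acc} yields that $\cl'_0$ is accessible and that both $J'_0$ and $G_0$ are accessible. Proposition~\ref{prop:cosmo-accessible} then upgrades $\cl'$ to an accessible $\infty$-cosmos with $J'$ an accessible cosmological embedding. Finally, $G$ is cosmological because cosmological limits and isofibrations in $\cl'$ are detected after applying $J'$ and then $F$, where they lie in $\cl$ by construction; combined with the accessibility of $G_0$, this shows that $G$ is an accessible cosmological functor. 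The main conceptual work lies in checking repleteness of $J'$; everything else is a formal consequence of the cited pullback-type results.
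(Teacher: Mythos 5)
Your proposal is correct and takes essentially the same route as the paper: establish that the square restricts to a pullback of underlying categories in which $J_0$ is an isofibration (via repleteness), apply Proposition~\ref{prop:pb-acc} to get accessibility of $\cl'_0$, $J'_0$ and $G_0$, and then upgrade via Proposition~\ref{prop:cosmo-accessible}. The only difference is that where the paper simply cites Proposition~6.3.12 of \cite{elements} for the fact that $J'$ is a cosmological embedding and $G$ a cosmological functor, you reprove that fact directly by checking repleteness and closure under cosmological limits; this is a legitimate (if slightly more laborious) substitute for the citation.
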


\proof
By Proposition 6.3.12 of \cite{elements}, $G\colon\cl'\to\cl$ is a
cosmological embedding of $\infty$-cosmoi and $J'$ a cosmological functor.

A replete inclusion such as $J$ is in particular an isofibration at
the level of underlying categories. By Proposition~\ref{prop:pb-acc},
it follows that $(\cl')_0$ is an accessible category and $G_0$ and
$J'_0$ are accessible functors. Now $\cl'$ is an accessible
$\infty$-cosmos by Proposition~\ref{prop:cosmo-accessible}, and it
follows immediately that $J'$ is an accessible cosmological embedding
and $G$ is an accessible cosmological functor.
\endproof

\section{Left adjoint left inverses}\label{sect:lali}

A more exotic construction of $\infty$-cosmoi than those seen so far
is the $\infty$-cosmos of $\infty$-categories with limit of a given
shape --- see Section 6.3 of \cite{elements}.  As described therein,
such examples involving $\infty$-categorical structures with universal
properties, are naturally understood using the $\infty$-cosmos of
\emph{lalis}.  The present section adapts $\infty$-cosmoi of lalis to the accessible setting, and our results here make full use of all the axioms of an accessible $\infty$-cosmos. 

A morphism $f\colon A \to B$ in a $2$-category $\ck$ is said to be a left adjoint left inverse (lali) if it admits a right adjoint $u$ for which the counit $\epsilon \colon fu \Rightarrow 1_B$ is invertible --- the right adjoint $u$ is then called a right adjoint right inverse (rari).  In particular, a morphism $f$ is a lali just when it admits a rari, and vice versa.

A commutative square 

\begin{equation*}
\xymatrix{
A \ar[d]_{f} \ar[r]^{r} & A' \ar[d]^{f'} \\
B \ar[r]^{s} & B'
}
\end{equation*}
with $f$ and $f'$ lalis is said to be a morphism of lalis just when it also commutes with the right adjoints in the sense that its mate $ru \Rightarrow u's$ is invertible.  Note that this is independent of the choice of right adjoints.

Now if $\ck$ is an $\infty$-cosmos, a morphism $f\colon A \to B$ is said to be a lali/rari when it is one in the homotopy $2$-category $h\ck$.  Likewise, a commuting square $(r,s)\colon f \to f'$ is said to be a morphism of lalis if it is so in $h\ck$.  

In Proposition 6.3.10 of \cite{elements}, Riehl and Verity construct a
cosmologically embedded $\infty$-cosmos $$\lali(\ck) \hookrightarrow
\ck^{\isf}$$ whose objects are the isofibrations that are lalis 
(in other words, admit a rari)  and with morphisms the morphisms
of lalis.
The fact that it is a cosmological embedding fully determines the
remaining structure:  the inclusion reflects isofibrations and is full on positive-dimensional arrows.  

Let us mention an important point: by Lemma 3.6.9 of \cite{elements},
if an isofibration $f\colon A \twoheadrightarrow B$ is a lali, then
the right adjoint $u\colon B \to A$ can be chosen so that it is a
 section of $f$ and so that the counit is the identity $fu = 1$ in $h\ck$.

The goal of this section is to prove that if $\ck$ is an accessible $\infty$-cosmos then so is $\lali(\ck)$ with, moreover, the inclusion $\lali(\ck) \hookrightarrow \ck^{\isf}$ an accessible cosmological embedding.  This result, whose proof makes full use of all of the axioms for an accessible $\infty$-cosmos, is essential for our later applications.  

In moving towards this result, we begin by showing that lalis and their morphisms are \emph{representable} notions.

\begin{proposition}\label{prop:lali-char}
  Let $\ck$ be an $\infty$-cosmos.
  
 (1) An isofibration $p\colon A'\twoheadrightarrow A$ in $\ck$ is a lali if and only if
  \begin{enumerate}[(a)]
  \item $\ck(C,p)\colon\ck(C,A')\twoheadrightarrow\ck(C,A)$ is a lali in $\qCat$ for
    each $C\in\ck$;
  \item the square
    \[ \xymatrix{
        \ck(D,A') \ar@{>>}[d]_{\ck(D,p)} \ar[r]^-{\ck(c,A')} & \ck(C,A')
        \ar@{>>}[d]^{\ck(C,p)} \\
        \ck(D,A) \ar[r]_-{\ck(c,A)} & \ck(C,A) } \]
    defines a morphism of lalis in $\qCat$ for each $c\colon C\to D$
    in $\ck$. 
  \end{enumerate}
In fact the cases $C=A$ and $C=A'$ in (a), and $c=p$ in (b) suffice.

(2) Similarly, if $p\colon A'\twoheadrightarrow A$ and $q\colon B'\twoheadrightarrow B$ are lalis in $\ck$ then a
  morphism
  \[ \xymatrix{
      A' \ar[r]^-{f'} \ar@{>>}[d]_p & B' \ar@{>>}[d]^q \\ A \ar[r]_-f & B } \]
  in $\ck^\isf$ is a morphism of lalis in $\ck$ if and only if
  \[ \xymatrix{
      \ck(C,A') \ar[r]^-{\ck(C,f')} \ar@{>>}[d]_{\ck(C,p)} & \ck(C,B')
      \ar@{>>}[d]^{\ck(C,q)} \\
      \ck(C,A) \ar[r]_-{\ck(C,f)} & \ck(C,B) } \]
  is one in $\qCat$; and in fact the case $C=A$ suffices.
\end{proposition}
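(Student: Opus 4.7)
The plan is to prove both directions via representability, using that each simplicial hom-functor $\ck(C,-)\colon\ck\to\qCat$ descends to a 2-functor $h\ck\to h\qCat$ (as $\pi$ preserves finite products), hence preserves adjunctions, lalis, and mates. For necessity, if $p\dashv u$ in $h\ck$ with invertible counit, then $\ck(C,p)\dashv\ck(C,u)$ in $h\qCat$ with invertible counit for every $C$, giving (1)(a). For (1)(b), choosing rari's $\ck(D,u)$ and $\ck(C,u)$, the mate of the square is the identity 2-cell, since both composites $\ck(c,A')\circ\ck(D,u)$ and $\ck(C,u)\circ\ck(c,A)$ equal $\ck(c,u)$. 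The necessity in (2) is similar: the $h\qCat$-mate of the image square under $\ck(C,-)$ is $\ck(C,\alpha)$ for $\alpha$ the $h\ck$-mate, hence invertible.

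For the sufficiency of (1), I would work only with the cases $C=A,A'$ and $c=p$. From (a) at $C=A$, extract a rari $u_*^A$ to the lali $\ck(A,p)$ and set $u:=u_*^A(1_A)$, so the counit gives $\epsilon^A_{1_A}\colon pu\cong 1_A$. Since $\ck(A,p)$ is an isofibration of quasicategories, I would lift this isomorphism along $\ck(A,p)$ to replace $u$ by an isomorphic 1-cell with $pu=1_A$, and after a harmless adjustment of $u_*^A$ arrange $\epsilon^A_{1_A}=1$. From (a) at $C=A'$ extract a rari $u_*^{A'}$ with unit $\eta^{A'}$; from (b), the mate $\mu\colon \ck(p,A')u_*^A\Rightarrow u_*^{A'}\ck(p,A)$ is invertible. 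Plugging $\epsilon^A_{1_A}=1$ into the standard formula for the mate collapses one factor, yielding $\mu_{1_A}=\eta^{A'}_{up}$. Now define the candidate unit $\eta:=\mu_{1_A}^{-1}\circ\eta^{A'}_{1_{A'}}\colon 1_{A'}\Rightarrow up$.

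Since $\epsilon:=\epsilon^A_{1_A}$ is an identity 2-cell, both triangle identities reduce to $p\eta=1_p$ and $\eta u=1_u$. For the first, compute $\ck(A',p)(\eta)$: the triangle identity of $\ck(A',p)\dashv u_*^{A'}$ at $1_{A'}$ and at $up$ gives $\ck(A',p)(\eta^{A'}_{1_{A'}})=(\epsilon^{A'}_p)^{-1}$ and $\ck(A',p)(\mu_{1_A})=(\epsilon^{A'}_{pup})^{-1}=(\epsilon^{A'}_p)^{-1}$, where the second equality uses $pup=p$; the composite is $1_p$. For the second, first observe that $\ck(A,p)(\eta u)=(p\eta)u=1_{1_A}$; then, since $\epsilon^A_{1_A}=1$ and $\eta^A_u=1_u$ (a consequence of the triangle identity of $\ck(A,p)\dashv u_*^A$), the hom-set bijection of that adjunction restricts to $\alpha\mapsto\ck(A,p)(\alpha)$ on $\pi(\ck(A,A'))(u,u)\cong\pi(\ck(A,A))(1_A,1_A)$, with inverse $\beta\mapsto u_*^A(\beta)$. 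Applying this inverse to $\ck(A,p)(\eta u)=1_{1_A}$ gives $\eta u=u_*^A(1_{1_A})=1_u$.

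For part (2), I would normalize rari's $u$ for $p$ and $v$ for $q$ in $h\ck$ with $pu=1$, $qv=1$ and identity counits, then take $\ck(A,u)$ and $\ck(A,v)$ as rari's in $h\qCat$ for $\ck(A,p)$ and $\ck(A,q)$. Since $\ck(A,-)\colon h\ck\to h\qCat$ preserves adjunctions and hence mates, the $h\qCat$-mate of the square at $C=A$ equals $\ck(A,\alpha)$, where $\alpha\colon f'u\Rightarrow vf$ is the $h\ck$-mate. Its value at the vertex $1_A\in\ck(A,A)_0$ is precisely $\alpha$ itself, so invertibility of $\ck(A,\alpha)$ in $h\qCat(\ck(A,A),\ck(A,B'))$ forces invertibility of $\alpha$ in $\pi(\ck(A,B'))=h\ck(A,B')$, which is the desired conclusion. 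The principal obstacle throughout is the triangle identity verification in the sufficiency of (1); the key idea that makes it tractable is that the normalization $pu=1$, $\epsilon=1$ converts the otherwise cumbersome mate manipulations into a single application of the hom-set bijection of $\ck(A,p)\dashv u_*^A$.
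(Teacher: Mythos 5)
Your proof is correct and follows essentially the same route as the paper's: necessity via the $2$-functoriality of the representables $\ck(C,-)$ and the $2$-naturality of $\ck(c,-)$, and sufficiency by normalizing a section $u$ of $p$ out of the lali structure on $\ck(A,p)$, manufacturing the unit $1_{A'}\Rightarrow up$ from the lali structure on $\ck(A',p)$ together with the invertible mate for $c=p$, and then checking the two triangle identities, with part (2) likewise handled by evaluating the mate of the $C=A$ square at the vertex $1_A$. The only difference is presentational: where the paper phrases the triangle-identity checks via the bijections on $2$-cells induced by the adjunctions, you unwind the explicit mate formula, but the content is the same.
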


\proof
Any cosmological functor preserves lalis and morphisms of lalis, and
so in particular each representable $\ck(C,-)\colon\ck\to\qCat$ does
so. Similarly simplicially enriched natural transformations between cosmological functors
induce morphisms of lalis. Applying these facts to the cosmological
functors $\ck(C,-)\colon\ck\to\qCat$ and the natural transformations
$\ck(c,-)\colon\ck(D,-)\to\ck(C,-)$ gives the ``only if'' parts of the
proposition. We now turn to the converses. 

Suppose then that $\ck(C,p)$ is a lali in $\qCat$ if $C=A$ or $C=A'$,
and also that 
 \[ \xymatrix{
        \ck(A,A') \ar@{>>}[d]_{\ck(A,p)} \ar[r]^-{\ck(p,A')} & \ck(A',A')
        \ar@{>>}[d]^{\ck(A',p)} \\
        \ck(A,A) \ar[r]_-{\ck(p,A)} & \ck(A',A) } \]
 is a morphism of
lalis. Since $\ck(A,p)\colon  \ck(A,A')\twoheadrightarrow \ck(A,A)$ is a lali, the right
adjoint will send $1\colon A\to A$ to some $s\colon A\to A'$ with
$ps=1$, such that $p$ induces a bijection between  $2$-cells $x\to s$ and $px\to 1$ in $h\ck$  for any $x\colon A\to A'$.

Since the above square is a morphism of lalis, $p$ also induces a bijection
between  $2$-cells $y\to sp$ and $py\to p$,
for any $y\colon A'\to A'$.
In particular, the identity $p\to p$ corresponds to some $\sigma\colon
1\to sp$ with $p\sigma = 1$; on the other hand, the
images of $\sigma s, 1_s\colon s \rightrightarrows s$ under $p$ are  equal  to the
identity, so that  $\sigma s = 1_s$.  This
proves that $p$ is a lali, giving the ``if'' part of (1).

As for  (2), suppose that $\sigma\colon 1\to sp$ and $\sigma'\colon 1\to
s'q$ exhibit $p$ and $q$ as lalis, and that 
\[ \xymatrix{
    \ck(A,A') \ar[r]^-{\ck(A,f')} \ar@{>>}[d]_{\ck(A,p)} & \ck(A,B')
    \ar@{>>}[d]^{\ck(A,q)} \\
    \ck(A,A) \ar[r]_-{\ck(A,f)} & \ck(A,B) } \]
is a morphism of lalis in $\qCat$. We are to show that the induced
\[ \xymatrix{
    f's \ar[r]^{\sigma' f's} & s'qf's \ar@{=}[r] & s'fps \ar@{=}[r] &
    s'f } \]
is invertible, but this is just the component at $1_A$ of the induced
\[     h\ck(A,f')h\ck(A,s) \to h\ck(A,s')h\ck(A,f) \]
which is invertible by assumption.
\endproof

Consider a cosmological embedding $J\colon \cl \to \ck$ of
$\infty$-cosmoi and a diagram $S\colon \cc \to \cl$ such that
$\{W,JS\}$ exists in $\ck$.  Let us say that the cosmological
embedding {\em creates}\/ the weighted limit if $\{W,JS\} \in\cl$ and we have a pullback square \begin{equation*}
\xymatrix{
\cl(A,\{W,JS\}) \ar[d]_{J} \ar[r]^-{} &  [\cc,\SSet](W,\cl(A,S-)) \ar[d]_{J_{*}} \\
\ck(A,\{W,JS\}) \ar[r] & [\cc,\SSet](W,\ck(A,JS-)) 
}
\end{equation*}
natural in $A$.  This says precisely that the unit $W \to
\ck(\{W,JS\},JS-)$ factorizes through $J$ as $W \to
\cl(\{W,JS\},S-)$ and exhibits $\{W,JS\}$ as the weighted limit $\{W,S\}$ in $\cl$.

\begin{lemma}\label{lem:creation}
Each cosmological embedding $J\colon \cl \to \ck$ of $\infty$-cosmoi
creates any weighted limits that are homotopy limits in $\ck$.
\end{lemma}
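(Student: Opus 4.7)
My plan is to leverage the flexible limits already available in $\cl$ by replacing $W$ with a flexible weight. I would choose a projective cofibrant replacement $p\colon Q \to W$ in $[\cc, \SSet]$ with $Q$ flexible ($\ci$-cellular) and $p$ a trivial fibration. Since $\cl$ is an $\infty$-cosmos it admits $L := \{Q, D\}$ as a flexible limit, and since $J$ is cosmological it preserves flexible limits, giving $JL = \{Q, JD\}$ strictly in $\ck$. The comparison $c\colon \{W, JD\} \to JL$ induced by $p^*$ then satisfies $\ck(A, c) \cong p^*$ under the defining isomorphisms, and the homotopy limit hypothesis on $\{W, JD\}$ makes $c$ an equivalence in $\ck$.

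The next step is to show $\{W, JD\} \in \cl$ and to factor the universal cone through $\cl$. Since $c$ is an equivalence in $\ck$ between $JL \in \cl$ and $\{W, JD\}$, repleteness (1) gives $\{W, JD\} \in \cl$, and repleteness (2) places both $c$ and a pseudoinverse $d\colon JL \to \{W, JD\}$ in $\cl$. Writing $\zeta = J_* \mu$ for $\mu\colon Q \to \cl(L, D{-})$ the universal cone in $\cl$, the defining relation $\eta \circ p = \ck(c, JD{-}) \circ \zeta$ applied to $q \in Q(x)_0$ gives
\[ \eta_x(p_x(q)) = J(\mu_x(q) \circ c) \in \cl. \]
Surjectivity of $p_x$ on $0$-simplices (it is a Joyal trivial fibration) makes every $\eta_x(w)$ with $w \in W(x)_0$ lie in $\cl$, and repleteness (4) then extends this to a factorization $\tilde\eta\colon W \to \cl(\{W, JD\}, D{-})$ on all simplices.

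The main obstacle is establishing the pullback condition, equivalently that the induced map $\cl(A, \{W, JD\}) \to [\cc, \SSet](W, \cl(A, D{-}))$ is an isomorphism for each $A \in \cl$. Injectivity is immediate from the iso in $\ck$; the real issue is lifting a $0$-simplex $f$ of $\ck(A, \{W, JD\})$ whose corresponding cone $\ck(f, JD{-}) \circ \eta = J_* \alpha$ lands in $\cl$ back to $\cl(A, \{W, JD\})_0$. My plan is to use $c$: the cone corresponding to $c \circ f$ in $\ck(A, JL)$ is $J(\alpha p)$, while the strict universal property of $L$ in $\cl$ yields $f'\colon A \to L$ in $\cl$ whose image $Jf'$ corresponds to the same cone $J(\alpha p)$; uniqueness forces $c \circ f = Jf' \in \cl$, so $d \circ c \circ f \in \cl$. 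Pre-whiskering the $2$-isomorphism $dc \cong 1_{\{W, JD\}}$ by $f$ produces a $2$-isomorphism $dcf \cong f$ in $h\ck(A, \{W, JD\})$, and repleteness (3) then places $f$ in $\cl$. Higher simplicial degrees reduce to this case, since by repleteness (4) an $n$-simplex of $\ck(A, \{W, JD\})$ with $n > 0$ lies in $\cl$ iff all of its vertices do.
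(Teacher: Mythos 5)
Your proposal is correct and follows essentially the same route as the paper's proof: cofibrantly replace $W$ by a flexible $Q$, use that $\{Q,D\}$ exists in $\cl$ and is preserved by $J$, deduce from the homotopy-limit hypothesis that the comparison is an equivalence, and then invoke repleteness (1)--(4) to place $\{W,JD\}$, the comparison, the cone, and finally each $f$ with cone in $\cl$ inside $\cl$. The only difference is organizational --- you run an element-wise argument on $0$-simplices where the paper pastes pullback squares and uses regular-epi/mono orthogonality --- but the substance is identical.
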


\begin{proof}
Consider $W\colon \cc \to \SSet$ and $S\colon \cc \to \cl$ such that
$\{W,JS\}$ exists in $\ck$  and is a homotopy limit.  Let $p\colon Q \to W$ be a flexible
cofibrant replacement of $W$.  Since $\infty$-cosmoi have flexible
limits and cosmological functors preserve them, $\{ Q,S\}$
exists and is preserved by $J$; since $\{W,JS\}$ is a
homotopy limit, the canonical comparison $\lambda\colon \{W,JS\} \to
\{Q,JS\} =\{Q,S\}$ in $\ck$ is an equivalence.  Therefore, by repleteness, both $\{W,JS\} $ and $\lambda$ belong to $\cl$.  This allows us to consider the commutative diagram below.

\begin{equation*}
\xymatrix{
\cl(A,\{W,JS\}) \ar[d]_{J} \ar[r]^{\lambda_{*}} &  \cl(A,\{Q,JS\}) \ar[d]^{J} \ar[r]^-{\cong} & (Q,\cl(A,S-)) \ar[d]^{J_{*}} \\
\ck(A,\{W,JS\}) \ar[r]^{\lambda_{*}} & \ck(A,\{Q,JS\}) \ar[r]^-{\cong} & (Q,\ck(A,JS-)) \\
}
\end{equation*}

The right square is a pullback since its two horizontal components are
isomorphisms.  The verticals in the left square are full on
positive-dimensional arrows.  Therefore to show that the left square
is a pullback, it suffices to show that if $f\colon A \to
\{W,JS\}$ in $\ck$ has $\lambda \circ f \colon A \to \{Q,JS\}$ in
$\cl$, then $f$ is also in $\cl$.  Now by repleteness of
$\cl$, the equivalence-inverse $\lambda^{-1}$ is in $\cl$, which so is
the composite $\lambda^{-1}\circ\lambda\circ f$, and so finally the
isomorphic $f$. In particular the left square is a pullback, so that the outer square is a pullback.  Now its lower composite horizontal coincides with the corresponding morphism in the diagram below.

\begin{equation*}
\xymatrix{
\cl(A,\{W,JS\}) \ar[d]_{J} \ar[r]^-{\exists ! t_A} &  (W,\cl(A,S-)) \ar[d]^{J_*} \ar[r]^{p^*} & (Q,\cl(A,S-)) \ar[d]^{J_{*}} \\
\ck(A,\{W,JS\}) \ar[r]^{\cong} & (W,\ck(A,JS-)) \ar[r]^{p^*} & (Q,\ck(A,JS-)) \\
}
\end{equation*}
In this diagram, the right square is a pullback by the orthogonality
of the regular epimorphism $p\colon Q \to W$ and the monomorphism $J_{A,S-}\colon \cl(A,S-)
\to \ck(A,S-)$.  Therefore, by the universal property of the right
pullback square, we obtain a unique morphism $t_A$ to the pullback,
making the left square a pullback and such that the upper horizontals
of the two diagrams coincide.  Naturality and invertibility of the $t_A$ follows from the uniqueness of their construction.
\end{proof}

Using the lemma we next show that, in the accessible case, we can test for lalis and
their morphisms using small objects.

\begin{proposition}\label{prop:lali-representable}
  If $\ck$ is an accessible $\infty$-cosmos, for any sufficiently
  large $\lambda$, the canonical square
\[ \xymatrix{
    \lali(\ck)_0 \ar[r] \ar[d] & [\cg\op,\lali(\qCat)]_0 \ar[d] \\
    \ck^\isf_0 \ar[r] & [\cg\op,\qCat^\isf]_{0} }\]
is a pullback, where $\cg=\ck_{\lambda}$.
\end{proposition}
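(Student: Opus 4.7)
The plan is to describe the pullback $\cp$ concretely, reduce the required isomorphism to the representable characterization of lalis in Proposition~\ref{prop:lali-char}, and then prove a local-to-global step extending the lali data from $\cg$ to all of $\ck$ using Condition~(4) and Lemma~\ref{lem:creation}.

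First, unpack $\cp$. Since the cosmological embedding $\lali(\qCat)\hookrightarrow\qCat^\isf$ is full on positive-dimensional arrows and restricts on $0$-arrows to the morphisms of lalis, an object of $\cp$ amounts to an isofibration $p\colon A'\twoheadrightarrow A$ in $\ck$ such that $\ck(C,p)$ is a lali for every $C\in\cg$ and $\ck(c,p)$ is a morphism of lalis for every $0$-arrow $c\colon C\to D$ in $\cg$; a morphism of $\cp$ is a commutative square in $\ck^\isf_0$ whose image under each $\ck(C,-)$ with $C\in\cg$ is a morphism of lalis. The ``only if'' parts of Proposition~\ref{prop:lali-char}(1) and~(2) produce a comparison functor $\Phi\colon\lali(\ck)_0\to\cp$ which is the identity on underlying isofibrations and commutative squares. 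It is automatically injective on objects and on morphisms, so it suffices to prove surjectivity of both.

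The key step is a local-to-global claim: for any $(p,L)\in\cp$, any $B\in\ck$, and any $b\colon B\to B'$ in $\ck$, $\ck(B,p)$ is a lali in $\qCat$ and $\ck(b,p)$ is a morphism of lalis. For the object part, use Conditions~(1) and~(2) together with Proposition~\ref{prop:enr-acc} to write $B=\colim_i B_i$ as a $\lambda$-filtered colimit with each $B_i\in\cg$. The enriched-colimit property yields $\ck(B,p)=\lim_i\ck(B_i,p)$ in $\qCat^\isf$, while Condition~(4) ensures this strict limit is equivalent to the corresponding homotopy limit. Since by hypothesis on $L$ the diagram $i\mapsto\ck(B_i,p)$ takes values in $\lali(\qCat)$, Lemma~\ref{lem:creation} applied to the cosmological embedding $\lali(\qCat)\hookrightarrow\qCat^\isf$ places this homotopy limit inside $\lali(\qCat)$, and repleteness of the embedding then forces $\ck(B,p)$ itself to be a lali. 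The morphism-of-lalis version is obtained by the same argument in the arrow cosmos $\ck^\atwo$: the arrow $b$ is a $\lambda$-filtered colimit of $\cg^\atwo$-arrows, $\lambda$-filtered colimits in $\ck^\atwo$ remain homotopy colimits since Condition~(4) is inherited by arrow formation, and the cosmological embedding $\lali(\qCat)\hookrightarrow\qCat^\isf$ induces a corresponding embedding at the level of arrow $\infty$-cosmoi to which Lemma~\ref{lem:creation} applies.

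Given the local-to-global claim, surjectivity on objects follows from Proposition~\ref{prop:lali-char}(1) by specializing to $B=A$, $B=A'$, and $b=p\colon A'\to A$, and surjectivity on morphisms follows from Proposition~\ref{prop:lali-char}(2) by specializing to $B=A$. The main obstacle is the morphism-of-lalis version of the local-to-global step, which requires a careful arrow-cosmological version of Lemma~\ref{lem:creation}; it is precisely this step that makes essential use of Condition~(4), motivating its inclusion in Definition~\ref{defn:main}.
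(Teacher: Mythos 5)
Your reduction to a comparison functor $\Phi\colon\lali(\ck)_0\to\cp$ and your object-level local-to-global step are essentially the paper's argument: write $C=\colim_i G_i$ as a $\lambda$-filtered colimit of objects of $\cg$, observe that $\ck(C,p)=\lim_i\ck(G_i,p)$ is a homotopy limit in $\qCat^\isf$ by Condition (4), and invoke Lemma~\ref{lem:creation} for the embedding $\lali(\qCat)\hookrightarrow\qCat^\isf$. The gap is in how you handle the morphism-of-lalis conditions. You propose to rerun the same argument ``one level up'', applying Lemma~\ref{lem:creation} to ``a corresponding embedding at the level of arrow $\infty$-cosmoi''. But no such embedding is available off the shelf: Lemma~\ref{lem:creation} requires a cosmological embedding of $\infty$-cosmoi, and the ambient object here would have to be an $\infty$-cosmos whose objects are \emph{arbitrary} commutative squares of isofibrations in $\qCat$ (since $\ck(c,p)$ for a general arrow $c$ of $\ck$ is not an isofibration of $\qCat^\isf$), with the morphisms of lalis forming a replete sub-$\infty$-cosmos closed under cosmological limits, and with the relevant $\lambda$-filtered limits being homotopy limits there. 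None of this is established in the paper --- which only constructs $\ck^\isf$, not $\ck^{\atwo}$, as an $\infty$-cosmos --- and verifying it would be comparable in length to the second half of the proof of Proposition~\ref{prop:fib-acc}. You flag this yourself as ``the main obstacle'' but then assert rather than supply it.

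The paper avoids this machinery entirely by exploiting the \emph{creation} clause of Lemma~\ref{lem:creation} more fully: since $\ck(C,p)=\{\Delta 1,\ck(D-,p)\}$ is created as a limit in $\lali(\qCat)$, the cone projections $\ck(C,p)\to\ck(G_i,p)$ are themselves morphisms of lalis and \emph{jointly reflect} morphisms of lalis. Then for an arbitrary $c\colon C\to D$, one writes $D=\colim_j H_j$ and uses that each composite $G_i\to C\to D$ factors through some $H_j\to D$, so that $\ck(D,p)\to\ck(C,p)\to\ck(G_i,p)$ is exhibited as a composite of known morphisms of lalis; joint reflection then shows $\ck(D,p)\to\ck(C,p)$ is one. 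The same device handles surjectivity of $\Phi$ on morphisms. On that last point, note also that your stated local-to-global claim (about $\ck(b,p)$ for an arrow $b$ of $\ck$ and a fixed object $p$ of $\cp$) does not actually cover what surjectivity on morphisms requires, namely testing a fixed square $p\to q$ against arbitrary objects $C\notin\cg$; that case needs its own, analogous, limit-and-reflection argument.
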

\begin{proof}
The lower horizontal sends $p\colon A_0\twoheadrightarrow A_1$ to $\ck(J-,p)\colon\cg\op\to \qCat^\isf$, where $J\colon\cg\to\ck$ is the
inclusion, and this lifts along the forgetful vertical functors to the upper
horizontal by virtue of Proposition~\ref{prop:lali-char}. 

To show that it is a pullback we need to show that, in the characterization of
Proposition~\ref{prop:lali-char}, it suffices to consider the case
where the objects $C$ and morphisms $c\colon C\to D$ lie in $\cg$.

Choose $ \lambda$ such that
\begin{itemize}
\item $\ck$ is $\lambda$-accessible as a simplicially enriched category;
\item $\lambda$-filtered colimits are homotopy colimits in $\ck$
\end{itemize}
and let $\cg=\ck_\lambda$.
 
Suppose then that $p\colon A_0\twoheadrightarrow A_1$ is an
isofibration in $\ck$ such that each $\ck(G,p)$ is a lali and each
square
\[ \xymatrix{
    \ck(H,A_0) \ar[r]^-{\ck(g,A_0)} \ar@{>>}[d]_{\ck(H,p)} &
    \ck(G,A_0) \ar@{>>}[d]^{\ck(G,p)} \\
    \ck(H,A_1) \ar[r]_-{\ck(g,A_1)} & \ck(G,A_1) } \]
is a morphism of lalis, for $g\colon G\to H$ in $\cg$.

For an arbitrary $C\in\ck$, we may write $C$ as a
$\lambda$-filtered colimit $\colim(S\colon \cj \to \ck)$ of a diagram taking values in $\cg$.  
Then, homming into $p$, we obtain $\ck(C,p) = \lim(\ck(S-,p)\colon
\cj\op \to \SSet^{\atwo}$ and since this diagram lifts to
$\ck(S-,p)\colon \cj\op \to \qCat^{\isf}$, and moreover since
$\ck(C,f)$ belongs to $\qCat^{\isf}$, it is also true that
$\ck(C,p) = \lim(\ck(S-,p)\colon \cj\op \to \qCat^{\isf}$).  We claim that this limit is in fact a homotopy limit.

To this end, let $p\colon Q \to \Delta 1 \in [\cj\op,\SSet]$ be a flexible cofibrant replacement.  By Proposition 6.2.8(i) of \cite{elements}, each $\infty$-cosmos admits flexible limits, so the limit $\{Q,\ck(S-,p)\}$ exists in $\qCat^{\isf}$.  Therefore, to show that $\ck(C,p)$ is the homotopy limit is equivalently to show that the induced map 
\[ \ck(C,p) \cong \{\Delta 1,\ck(S-,p)\} \to \{Q,\ck(S-,p)\} \]
is an equivalence in $\qCat^{\isf}$.  Since equivalences in $\qCat^{\isf}$ are pointwise as in $\qCat$, and since the above limits are pointwise --- the projections to $\qCat$ being cosmological --- this is equally to show that 
\[ \{\Delta 1,\ck(S-,A_i)\} \to \{Q,\ck(S-,A_i)\} \] is an equivalence of quasicategories for $i=0,1$, but this is simply
\[ [\cj\op,\SSet](\Delta 1,\ck(S-,A_i)\} \to  [\cj\op,\SSet](Q,\ck(S-,A_i)\} \]
which is an equivalence since $\lambda$-filtered colimits are homotopy colimits in $\ck$.

Thus $\ck(C,p)$ is a homotopy limit in $\qCat^\isf$ of lalis and morphisms of lalis. Therefore, by Lemma~\ref{lem:creation}, $\ck(C,p)$ is itself a lali and a limit in $\lali(\ck)$:  \emph{this means that the cone projections $\ck(C,p) \to \ck(S_j,p)$ are morphisms of lalis and jointly reflect morphisms of lalis}.

To complete the proof that $p$ is a lali, suppose now that $c\colon
C\to D$ is a morphism in $\ck$, and write $D$ as  a $\lambda$-filtered
colimit $\colim_j H_j$ of a diagram in $\cg$.
Each pre-composite $G_i\to C\to D$ of $c$ by a cocone inclusion $G_i \to C$ factorizes
through some $H_j\to D$, and now in the resulting diagram
\[ \xymatrix{
    \ck(D,p) \ar[r] \ar[d] & \ck(C,p) \ar[d] \\
    \ck(H_j,p) \ar[r] & \ck(G_i,p) } \]
 the vertical morphisms are morphisms of lalis since they are cone projections as above, whilst the lower horizontal is a morphism of lalis by assumption.  Hence the composite from top left to bottom right is a morphism of lalis.  Since the cone projections on the right vertical jointly reflect morphisms of lalis, it follows that the upper horizontal is a morphism of lalis too, as required.  Thus $p$ is a lali by Proposition~\ref{prop:lali-char}.
 
 Finally suppose that $p \to q$ is a morphism in $\ck^\isf$ where $p$ and $q$ are
lalis, and that the image of the square under $\ck(C,-)$ is a
morphism of lalis for each $C\in\cg$.  Write $C = \colim_j D_j$ as a $\lambda$-filtered colimit of objects of $\cg$.  Then for each cocone inclusion $D_j \to C$ we have the commutative square
\[ \xymatrix{
    \ck(C,p) \ar[r] \ar[d] & \ck(C,q) \ar[d] \\
    \ck(D_j,p) \ar[r] & \ck(D_j,q) } \]
 in which the vertical cone projections are morphisms of lalis and jointly detect morphisms of lalis.  By assumption the lower horizontal is a morphism of lalis, and it follows as before that the upper horizontal is a morphism of lalis too.  Therefore, by Proposition~\ref{prop:lali-char}, the original square is in fact a morphism of lalis in $\ck$, completing the proof.
\end{proof}

This last result essentially allows us to reduce to the case
$\ck=\qCat$, to which we now turn. For this, we need to work more
``analytically'', using a characterization of lalis in $\qCat$ from
\cite{elements}.

\begin{definition}
  Let $p\colon A'\twoheadrightarrow A$ be an isofibration in $\qCat$. Say that $a'\in
  A'$ is {\em $p$-universal}, or just universal if $p$ is understood, if for every diagram as in
  the solid part of \[ \xymatrix{
    1 \ar@/^1pc/[rr]^-{a'} \ar[r]_{[n]} & \partial\Delta[n] \ar[r]
    \ar[d] & A' \ar@{>>}[d]^p \\
    & \Delta[n] \ar[r] \ar@{.>}[ur] & A } \]
there exists a dotted arrow making the diagram commute.
\end{definition}

The following proposition illustrates the usefulness of this notion.

\begin{proposition}
  Consider a morphism
  \[ \xymatrix{
      A' \ar[r]^-{f'} \ar@{>>}[d]_p & B' \ar@{>>}[d]^q \\ A \ar[r]_-f & B } \]
  in $\qCat^\isf$.
  \begin{enumerate}
  \item $p$ is a lali if and only if for every $a\colon 1\to A$ there
    is a universal $a'\colon 1\to A'$ with $pa'=a$;
  \item if $p$ and $q$ are lalis, the square is a morphism of lalis if
    and only if $f'$ sends $p$-universal elements to $q$-universal elements.
  \end{enumerate}
\end{proposition}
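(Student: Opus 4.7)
My plan is to use the strict presentation of lalis from Lemma~3.6.9 of \cite{elements}: if $p\colon A'\twoheadrightarrow A$ is a lali one may choose a right adjoint section $s\colon A \to A'$ with $ps = 1_A$ and unit $\eta\colon 1_{A'} \to sp$ satisfying the strict triangle identities $p\eta = 1_p$ and $\eta s = 1_s$. The $p$-universal elements are then identified with those vertices equivalent in the fibre to ones of the form $s(a)$.

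For Part (1), direct implication: given such $s$ and $\eta$, each $s(a)$ is shown to be $p$-universal by a simplicial argument combining the lift $s\sigma\colon\Delta[n]\to A'$ of $\sigma$ (which already has last vertex $s(a)$), the unit-homotopy $\eta\beta\colon\partial\Delta[n]\times\Delta[1]\to A'$ from $\beta$ to $sp\beta = s\sigma|_{\partial}$ (which is fibrewise over $\sigma|_{\partial}$ by $p\eta = 1_p$, and degenerate at the last vertex by $\eta s = 1_s$), and inner-horn fillers in the quasicategory $A'$. Resolving these data together yields an $n$-simplex with the prescribed boundary $\beta$ and ground $\sigma$. For the converse, choose a $p$-universal lift $a' = s(a)$ for each vertex and extend $s$ to a simplicial map by induction on dimension: given $\sigma\colon\Delta[n]\to A$ whose boundary has already been lifted to $s\partial\sigma\colon\partial\Delta[n]\to A'$ with last vertex $s(\sigma(n))$, universality of $s(\sigma(n))$ fills the missing $n$-simplex. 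Coherence with faces and degeneracies is arranged by choosing lifts only for non-degenerate simplices and propagating through the skeletal filtration. The unit $\eta$ is built in parallel: at each vertex $a'$ one lifts the degenerate edge at $pa'$ using universality of $sp(a')$ and extends inductively; $p\eta = 1_p$ is automatic, while $\eta s = 1_s$ is engineered by pinning down the universal lift at $s(a)$ of the degenerate edge to be itself degenerate.

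Part (2) follows by mate calculus. Under the adjunctions $p \dashv s_p$ and $q \dashv s_q$, the mate of the given square is a $2$-cell $\mu\colon f's_p \Rightarrow s_qf$ whose component at a vertex $a$ is an edge of $B'$ over the identity on $f(a)$, produced by the universal property of $s_qf(a)$ applied to $f's_p(a)$; by definition the square is a morphism of lalis iff each $\mu_a$ is invertible. Suppose first that $f'$ preserves universal elements: then $f's_p(a)$ is $q$-universal, hence fibrewise equivalent to $s_qf(a)$ via mutually inverse edges obtained by applying their respective universal properties to one another, and $\mu_a$ realises this equivalence. Conversely, suppose $\mu$ is invertible and $a'$ is $p$-universal over $a$; then $a' \simeq s_p(a)$ in the fibre by the same two-universals argument, whence $f'(a')\simeq f's_p(a)\simeq s_qf(a)$, and since $q$-universality is invariant under fibre equivalence---a direct consequence of the lifting definition---$f'(a')$ is $q$-universal.

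The principal obstacle is the converse of Part (1): assembling the pointwise universal lifts into an honest simplicial map $s$ and a simplicial natural transformation $\eta$ satisfying the strict triangle identities is a bookkeeping-intensive skeletal induction. The key simplification throughout is that universality reduces every inductive step to a single lifting against $\partial\Delta[n]\hookrightarrow\Delta[n]$, and the strict identity $\eta s = 1_s$ can be engineered by the specific choices made for $s$ on the image simplices. The direct implication of Part (1), while conceptually clean, requires some care in coordinating $s\sigma$, the unit-homotopy, and the inner-horn fillers so that they assemble into a single $n$-simplex with precisely the required boundary.
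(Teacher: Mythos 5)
Your part (2) follows essentially the same route as the paper: reduce the morphism-of-lalis condition to pointwise invertibility of the mate at vertices $a\colon 1\to A$, and relate universality to the unit. But there is an imprecision worth flagging. The component $\mu_a$ of the mate is canonically the unit component $\tau_{f's_pa}\colon f's_pa\to s_qqf's_pa=s_qfa$ of $q\dashv s_q$, not ``an edge produced by the universal property of $s_qfa$''; and knowing that $f's_pa$ and $s_qfa$ are both $q$-universal, hence joined by \emph{some} fibrewise equivalence, does not by itself show that the \emph{particular} edge $\mu_a$ is invertible. The statement the paper extracts from the proof of Lemma~F.3.1 of \cite{elements} is sharper and does all the work at once: $\tau_{b'}$ is invertible if and only if $b'$ is $q$-universal (because any choice of universal lifts assembles into a right adjoint, and right adjoints are unique up to isomorphism compatibly with units). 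With that, both directions of (2) are immediate and you do not need the auxiliary claims that two universal lifts are mutually inverse-equivalent or that universality is invariant under fibre equivalence (the latter is true but is itself not ``a direct consequence of the lifting definition''; it needs the isofibration property to transport lifting problems along the equivalence).

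For part (1) the paper simply cites Lemma~F.3.1 of \cite{elements}, whereas you set out to reprove it, and here the proposal has a genuine gap: the two steps carrying all the content are asserted rather than carried out. First, your prism argument that $s(a)$ is universal presupposes the triangle identities $p\eta=1_p$ and $\eta s=1_s$ hold \emph{strictly at the simplicial level}, whereas Lemma~3.6.9 of \cite{elements} only provides a strict section $s$ with identity counit, the triangle identities then holding in the homotopy $2$-category (i.e.\ up to homotopy); and even granting strictness, coordinating $s\sigma$, the unit homotopy, and the fillers requires an explicit decomposition of $\Delta[n]\times\Delta[1]$ with a case analysis of which fillers are inner horns and which use universality. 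Second, the converse --- assembling pointwise universal lifts into a simplicial section $s$ and a unit $\eta$ satisfying $p\eta=1$ and $\eta s=1$ --- is precisely the content of Lemma~F.3.1, and ``bookkeeping-intensive skeletal induction'' names the difficulty without resolving it (in particular, the compatibility of the chosen lifts with degeneracies and the construction of $\eta$ by filling the pushout-products $\partial\Delta[n]\times\Delta[1]\cup\Delta[n]\times\partial\Delta[1]\to\Delta[n]\times\Delta[1]$ are exactly where the work lies). Since the result is proved in \cite{elements}, the economical course is to cite it, as the paper does; a self-contained proof would have to carry out these inductions in full.
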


\proof
(1) This is \cite[Lemma~F.3.1]{elements}.

(2) Suppose that $p$ and $q$ are lalis, and that $\sigma\colon 1\to
sp$ and $\tau\colon 1\to tq$ exhibit $s$ and $t$ as right adjoints to
$p$ and $q$.

Consulting the proof of \cite[Lemma~F.3.1]{elements}, one sees that
$s\colon A\to A'$ can be constructed in such a way that each $sa$ is
universal, and indeed any choice of universal lifts $sa$ of each $a\in
A$ can be assembled into an $s$. Thus it follows that $\sigma a'\colon
a'\to spa'$ is invertible if and only if  $a'$ is universal.

The square will be a morphism of lalis just when $\tau f's\colon f's\to
tqf's=tfps=tf$ is invertible; and this in turn will be the case if and
only if $\tau f'sa\colon f'sa\to tfa$ is invertible for each $a\in A$,
which by the previous paragraph amounts to the requirement that $f'$
 preserve universals.
\endproof

\begin{proposition}\label{prop:lali-qcat}
  $\lali(\qCat)\to\qCat^\isf$ is an accessible cosmological
  embedding. 
\end{proposition}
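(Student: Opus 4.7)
The plan is to apply Proposition~\ref{prop:cosmo-accessible}. Since $\qCat$ is an accessible $\infty$-cosmos (arising from the Joyal model structure on $\SSet$), Proposition~\ref{prop:fib-acc} makes $\qCat^\isf$ accessible too; combined with the fact that $\lali(\qCat) \hookrightarrow \qCat^\isf$ is a cosmological embedding by Proposition~6.3.10 of \cite{elements}, it remains only to verify that the underlying-category inclusion $\lali(\qCat)_0 \hookrightarrow \qCat^\isf_0$ is an accessible functor between accessible categories. The key instrument is the analytic characterization given in the preceding proposition: an isofibration $p\colon A' \twoheadrightarrow A$ in $\qCat$ is a lali iff each $0$-simplex of $A$ admits a $p$-universal lift, and a commuting square $(f', f)\colon p \to q$ between lalis is a morphism of lalis iff $f'$ sends $p$-universal $0$-simplices to $q$-universal ones. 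Both conditions concern only lifting problems against the finite boundary inclusions $\partial\Delta[n] \hookrightarrow \Delta[n]$, so are controlled by finitary data.

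I would then choose a regular cardinal $\lambda$ large enough that $\qCat$ is $\lambda$-accessible, that $\lambda$-filtered colimits in $\qCat$ and $\qCat^\isf$ agree with those in $\SSet$ and $\SSet^{\atwo}$, and that each finite simplicial set is $\lambda$-presentable in $\qCat$. The first step is to show that the inclusion $\lali(\qCat)_0 \hookrightarrow \qCat^\isf_0$ preserves $\lambda$-filtered colimits. Given a $\lambda$-filtered diagram of lalis and morphisms of lalis with colimit $p\colon A' \twoheadrightarrow A$, every $a \in A_0$ comes from some $a_i \in A_{i,0}$; choose a $p_i$-universal lift $\tilde a'_i$, and note that its image in each later $A'_j$ remains $p_j$-universal since every transition is a morphism of lalis. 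Any lifting problem in $(A', A)$ against $\partial\Delta[n] \hookrightarrow \Delta[n]$ factors by $\lambda$-presentability through some later stage, where universality supplies a lift that transports back to $A'$. Hence the image of $\tilde a'_i$ in $A'$ is $p$-universal, so $p$ is a lali and the coprojections preserve universals. A nearly identical argument shows that a square in $\qCat^\isf$ whose composites with all coprojections are morphisms of lalis is itself a morphism of lalis, verifying that the inclusion is accessibly embedded.

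The second and more delicate step is to verify accessibility of $\lali(\qCat)_0$ itself. For this I would introduce an auxiliary category $\cu$ of \emph{pointed universal isofibrations}: objects are data $(p\colon A' \twoheadrightarrow A, a \in A_0, a' \in A'_0)$ with $pa' = a$ and $a'$ being $p$-universal, with morphisms the evident commutative diagrams. Since $p$-universality is an injectivity condition with respect to the small set of pointed boundary inclusions $(\partial\Delta[n] \hookrightarrow \Delta[n])_{n \geq 1}$, the Adam\'ek--Rosick\'y small-injectivity theorem makes $\cu$ accessible and accessibly embedded in the accessible category of pointed isofibrations, with forgetful functor $\cu \to \qCat^\isf_0$ accessible. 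The plan is then to recover $\lali(\qCat)_0$ from $\cu$ by packaging a ``choice of universal lift for each $0$-simplex of the base'' as a pullback of accessible functors, and invoke Proposition~\ref{prop:pb-acc}.

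The main obstacle I anticipate is executing this last packaging cleanly. Two $p$-universal lifts of the same $0$-simplex are canonically isomorphic but typically not equal, so the assignment ``lali $\mapsto$ choice of universal lifts'' is inherently up-to-iso. Fortunately the morphism-of-lalis condition is independent of such a choice by the previous proposition, and the cleanest way to record this should be via the bicategorical pullback interpretation of the Makkai--Par\'e Limit Theorem underlying Proposition~\ref{prop:pb-acc}, which packages the pseudo nature of the construction into a pullback of accessible functors between accessible categories.
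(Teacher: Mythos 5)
Your reduction is the same as the paper's: invoke Proposition~\ref{prop:cosmo-accessible} to reduce to showing that $\lali(\qCat)_0\to\qCat^\isf_0$ is an accessible functor between accessible categories, and use the analytic characterization of lalis in $\qCat$ via universal $0$-simplices. Your filtered-colimit argument for the embedding part is sound. But the decisive step --- accessibility of the category $\lali(\qCat)_0$ itself --- is left as a ``plan'' whose obstacle you correctly identify and then do not overcome. Recovering $\lali(\qCat)_0$ from your category $\cu$ of singly-pointed universal isofibrations cannot work as described: on objects you would be taking the full image of the forgetful functor $\cu\to\qCat^\isf_0$, and images of accessible functors between accessible categories need not be accessible; worse, $\lali(\qCat)$ is a \emph{non-full} subcategory of $\qCat^\isf$, whose morphisms must send \emph{all} $p$-universal $0$-simplices to $q$-universal ones, a condition that a single chosen point per object cannot record. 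The appeal to a bipullback does not repair this: no cospan is specified, and any reasonable candidate computes a category of lalis equipped with a chosen universal lift of each base point, which is not equivalent to $\lali(\qCat)_0$ precisely because morphisms of lalis need not commute strictly with such chosen sections.

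The missing idea, which is how the paper closes exactly this gap, is to mark not a choice of universal lift but the set of \emph{all} universal $0$-simplices --- a canonical datum, hence one that poses no functoriality problem. Concretely, one works in the locally presentable category $1_{\Delta[0]}\vert\SSet^\two_0$ of simplicial maps $p\colon(X',S)\to X$ equipped with a marked subset $S\subseteq X'_0$, with morphisms the squares preserving marked points, and shows that the objects with $p$ an isofibration of quasicategories admitting a lali structure and $S$ equal to the set of all $p$-universal points form a \emph{small injectivity class}: one family of cones forces every point of the base to admit a marked lift, the cones $(\partial\Delta[n],\{n\})\to(\Delta[n],\{n\})$ force marked points to be universal, and a cone built from $\iso$ forces the marked set to be closed under fibrewise isomorphism and hence to contain all universals. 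Since morphisms of marked arrows then restrict to exactly the morphisms of lalis (by part (2) of your characterization), the injectivity-class machinery of Section~9 of \cite{BourkeLackVokrinek} delivers in one stroke that $\lali(\qCat)_0$ is accessible and that its inclusion into $\SSet^\two_0$, hence into $\qCat^\isf_0$, is accessible. I recommend replacing your third and fourth paragraphs with this argument.
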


\proof It is a cosmological embedding by 
\cite[Proposition~6.3.10]{elements}.  Thus by
Proposition~\ref{prop:cosmo-accessible} it will suffice to show that
$\lali(\qCat)_0\to\qCat^\isf_0$ is accessible.

We know that $\qCat^\isf_0$ is accessible and
accessibly embedded in $\SSet^\two_0$, so it will suffice to show
that $\lali(\qCat)_0$ is an accessible category and the inclusion
$\lali(\qCat)_0\to\SSet^\two_0$ is an accessible functor. We do
so by showing that $\lali(\qCat)_0$ is a small injectivity class in
$\SSet^\two_0$, using techniques similar to those in Section~9 of
\cite{BourkeLackVokrinek}. 

Consider the category $1_{\Delta[0]}\vert\SSet^\two_0$ in which an object is a simplicial map $p\colon X'\to X$,
equipped with a subset $S\subseteq X'_0$ of ``marked objects'',
denoted $p\colon(X',S)\to X$; and a
morphism is a commutative square 
\[ \xymatrix{
    X' \ar[r]^-{f'} \ar[d]_p & Y' \ar[d]^q \\
    X \ar[r]_-f & Y 
  } \]
such that $f'$ sends marked objects to marked objects.

Then by  Proposition 9.2 of \cite{BourkeLackVokrinek},  the category $1_{\Delta[0]}\vert\SSet^\two_0$ is 
locally presentable and the forgetful functor to $\SSet^\two_0$ accessible, so, as per Corollary 9.3 of \cite{BourkeLackVokrinek},  the proof will be complete if we can show
that $\lali(\qCat)_0$ is a small injectivity class in $1_{\Delta[0]}\vert\SSet^\two_0$.
More precisely, we show that the collection of all those
$p\colon(X',S)\to X$ for which $p$ is both an isofibration and a lali, and $S$ consists precisely of all the
universal objects, is an injectivity class. Now injectivity with
respect to the diagrams 
\[ \xymatrix{ (\emptyset,\emptyset) \ar[r] \ar[d] &
    (\emptyset,\emptyset) \ar[d] \\ Y \ar[r]_-j & Z } \quad \quad \quad
  \xymatrix{ (Y,\emptyset) \ar[r]^-j \ar[d]_j & (Z,\emptyset) \ar[d]
    \\ Z \ar[r] & Z }
\]
for $j$  an inner horn inclusion or an endpoint inclusion $1 \to \iso$ says that $X$ is a
quasicategory, and $p$ an  isofibration (thus $X'$ is also a
quasicategory).

Injectivity with respect to the first of the following diagrams
\[ \xymatrix @C1.5pc { (\emptyset,\emptyset) \ar[r] \ar[d] & (1,1) \ar[d] \\ 1
    \ar[r] & 1 } ~
  \xymatrix @C1.5pc { (\partial\Delta[n],\{n\}) \ar[r] \ar[d] &
    (\Delta[n],\{n\}) \ar[d] \\ \Delta[n] \ar[r] & \Delta[n] } ~
  \xymatrix @C1.5pc { (\iso,\{0\}) \ar[r] \ar[d] & (\iso,\{0,1\}) \ar[d] \\
    (1,1) \ar[r] & (1,1) }
  \]
says that $S$ is non-empty; with respect to the second
(for all $n$) says that elements of $S$ are universal objects; and with respect to the third says that $S$ consists of all
the universal objects.
\endproof 

We can now put all the pieces together to prove the main result of the
section.

\begin{theorem}\label{thm:lali}
  If $\ck$ is an accessible $\infty$-cosmos then so is $\lali(\ck)$,
  and the cosmological embedding 
  $\lali(\ck)\to\ck^\isf$ is also accessible.
\end{theorem}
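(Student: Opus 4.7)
The plan is to deduce the theorem from the cosmological embedding $\lali(\ck)\hookrightarrow\ck^{\isf}$ of \cite[Proposition~6.3.10]{elements} together with Proposition~\ref{prop:cosmo-accessible}. Since Proposition~\ref{prop:fib-acc} already establishes that $\ck^{\isf}$ is an accessible $\infty$-cosmos, the only remaining task is to check that the underlying functor $\lali(\ck)_{0}\hookrightarrow\ck^{\isf}_{0}$ is accessible between accessible categories; Proposition~\ref{prop:cosmo-accessible} will then upgrade this to the statement that $\lali(\ck)$ is an accessible $\infty$-cosmos and that the inclusion is an accessible cosmological embedding.

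For this remaining task I would invoke the pullback square of Proposition~\ref{prop:lali-representable} with $\cg=\ck_{\lambda}$ for $\lambda$ large enough, and apply Proposition~\ref{prop:pb-acc} to it. The right-hand vertical $[\cg\op,\lali(\qCat)]_{0}\to[\cg\op,\qCat^{\isf}]_{0}$ is induced by the accessible cosmological embedding $\lali(\qCat)\hookrightarrow\qCat^{\isf}$ of Proposition~\ref{prop:lali-qcat}: the Makkai-Par\'e Limit Theorem guarantees that both $\cg\op$-diagram categories are accessible and that the induced functor between them is accessible, and it is moreover an isofibration of underlying categories since repleteness of $\lali(\qCat)_0\hookrightarrow\qCat^{\isf}_0$ forces it to be a pointwise isofibration, a property preserved by taking $\cg\op$-diagrams. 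The lower horizontal $\ck^{\isf}_{0}\to[\cg\op,\qCat^{\isf}]_{0}$, which sends an isofibration $p$ to the diagram $G\mapsto\ck(G,p)$, is accessible because each of its components $\ck(G,-)\colon\ck^{\isf}_{0}\to\qCat^{\isf}_{0}$ is so; this in turn follows from Condition~(2) of Definition~\ref{defn:main} combined with Proposition~\ref{prop:enr-acc} for the underlying $\ck(G,-)\colon\ck_{0}\to\SSet_{0}$, together with Condition~(3) to see that the arrow version restricts accessibly to isofibrations.

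Proposition~\ref{prop:pb-acc} then delivers the accessibility of $\lali(\ck)_{0}$ together with the accessibility of both projections from the pullback, giving in particular the accessible inclusion $\lali(\ck)_{0}\to\ck^{\isf}_{0}$ needed to feed into Proposition~\ref{prop:cosmo-accessible}. I do not anticipate any serious obstacle in executing this plan: the heavy lifting has all been done in Proposition~\ref{prop:lali-representable}, whose proof invoked all four axioms of an accessible $\infty$-cosmos in order to reduce the detection of lalis and their morphisms to probing by small objects, and the final theorem is then a routine orchestration of the pullback and functor-category tools assembled in the preceding sections.
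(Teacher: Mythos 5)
Your proposal is correct and follows essentially the same route as the paper: reduce via Proposition~\ref{prop:cosmo-accessible} to accessibility of $\lali(\ck)_0\to\ck^\isf_0$, then apply Proposition~\ref{prop:pb-acc} to the pullback square of Proposition~\ref{prop:lali-representable}, using Proposition~\ref{prop:lali-qcat} and the Makkai--Par\'e Limit Theorem for the right vertical. Your extra justification for the accessibility of the lower horizontal (via Conditions (2) and (3) of Definition~\ref{defn:main} and Proposition~\ref{prop:enr-acc}) is a correct elaboration of a step the paper leaves implicit.
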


\proof
By Proposition~\ref{prop:cosmo-accessible}, it will suffice to show
that $\lali(\ck)_0$ is an accessible category and the inclusion
$\lali(\ck)_0\to\ck^\isf_0$ is an accessible functor.

By Proposition~\ref{prop:lali-representable} we have a pullback
\[ \xymatrix{
    \lali(\ck)_0 \ar[r] \ar[d] & [\cg\op,\lali(\qCat)]_0 \ar@{>>}[d] \\
    \ck^\isf_0 \ar[r] & [\cg\op,\qCat^\isf]_0 } \]
in which the right vertical is an isofibration of categories since the
inclusion 
$\lali(\qCat)_0\to\qCat^\isf_0$ is one. The lower horizontal and right
vertical are accessible functors between accessible categories, in the
case of the right vertical by Proposition~\ref{prop:lali-qcat}.
It now follows by Proposition~\ref{prop:pb-acc} that the left leg is
an accessible functor between accessible categories, as required.
\endproof

Dually, we have

\begin{corollary}
If $\ck$ is an accessible $\infty$-cosmos, then so is the
$\infty$-cosmos $\lari(\ck)$ of {\em right}\/ adjoint left inverses in
$\ck$, and the cosmological embedding $\lari(\ck)\to\ck^\isf$ is 
also accessible.
\end{corollary}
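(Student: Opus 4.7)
The plan is to deduce the corollary from Theorem~\ref{thm:lali} by a single application of duality. The key observation is that the notion of rali is exactly the dual of the notion of lali: unwinding the definitions, a morphism $f\colon A\to B$ in the homotopy $2$-category $h\ck$ admits a left adjoint $u$ with invertible unit $\eta\colon 1_B\to fu$ if and only if, in the reversed homotopy $2$-category $h(\ck\co)=(h\ck)\co$, it admits a right adjoint $u$ with invertible counit $fu\to 1_B$. In other words, the ralis of $\ck$ are precisely the lalis of $\ck\co$, and likewise a commutative square is a morphism of ralis in $\ck$ iff it is a morphism of lalis in $\ck\co$. Since the underlying category, class of isofibrations, and cosmological limits all coincide in $\ck$ and $\ck\co$, this identification is compatible with passage to the $\infty$-cosmos of isofibrations, and indeed yields an equality $\rali(\ck)=\lali(\ck\co)\co$ of simplicially enriched categories sitting inside $\ck^{\isf}=(\ck\co)^{\isf}\co$.

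With this identification in hand, I would carry out the following steps in order. First, by Proposition~\ref{prop:duality}, the accessibility of $\ck$ transfers to accessibility of $\ck\co$. Second, applying Theorem~\ref{thm:lali} to $\ck\co$ shows that $\lali(\ck\co)$ is an accessible $\infty$-cosmos and that the cosmological embedding $\lali(\ck\co)\hookrightarrow(\ck\co)^{\isf}$ is accessible. Third, Proposition~\ref{prop:duality} applied once more promotes $\lali(\ck\co)$ to an accessible $\infty$-cosmos after taking $\co$, yielding the accessibility of $\rali(\ck)=\lali(\ck\co)\co$.

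Finally, to transfer the accessibility of the embedding, I would use the fact that taking $\co$ does not alter the underlying ordinary category or the underlying ordinary functor between $\infty$-cosmoi; so the underlying functor $\rali(\ck)_0\to\ck^{\isf}_0$ is literally the same as $\lali(\ck\co)_0\to(\ck\co)^{\isf}_0$, which is accessible by Step~2. Combined with the fact that $\rali(\ck)\hookrightarrow\ck^{\isf}$ is a cosmological embedding (this is dual to Proposition~6.3.10 of \cite{elements}, alternatively obtained by dualising the embedding from Step~2), Proposition~\ref{prop:cosmo-accessible} then gives accessibility of the embedding as an accessible cosmological embedding.

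The only delicate point — and the step I would double-check carefully — is the clean identification of $\rali(\ck)$ with $\lali(\ck\co)\co$ at the level of full simplicial enrichments; I would verify this by tracing through the definition of a morphism of lalis/ralis in the homotopy $2$-category and noting that repleteness and fullness on positive-dimensional arrows are properties invariant under passage to the opposite simplicial hom. Once that identification is secured, the remainder is a formal application of Proposition~\ref{prop:duality} and Theorem~\ref{thm:lali}, with no further analytic content.
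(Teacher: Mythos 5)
Your proposal is correct and follows exactly the paper's own argument: identify $\rali(\ck)\to\ck^\isf$ with $\lali(\ck\co)\co\to((\ck\co)^\isf)\co$ via 2-cell reversal, then combine Theorem~\ref{thm:lali} with two applications of Proposition~\ref{prop:duality}, noting that $(-)\co$ leaves underlying categories (and hence accessibility of the embedding) unchanged. The extra care you take over the identification at the level of simplicial enrichments is sensible but not a departure from the paper's route.
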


\begin{proof}
Reversing $2$-cells interchanges ralis and lalis --- in particular, $\lari(\ck) \to \ck^{\isf}$ is just $\lali(\ck\co)\co \to
((\ck\co)^{\isf})\co$.  The claim then follows from Theorem~\ref{thm:lali} combined with two applications of the $(-)^{\co}$ duality of Proposition~\ref{prop:duality}, on noting that since $(-)\co$ doesn't change underlying categories, it also respects accessibility of cosmological functors.
\end{proof}

We conclude this section with the observation that dealing with the
other two duals --- the laris and the raris --- would require an
alternative approach. While a 2-category $\ck$ possesses four duals, namely
$\ck\op$, $\ck\co$, $\ck\coop$, and $\ck$ itself, an $\infty$-cosmos
$\ck$ possesses only two: $\ck$ and $\ck\co$. Thus one cannot
simply define raris in $\ck$ to be lalis in $\ck\op$. In fact it seems
unlikely that one can even define an $\infty$-cosmos of laris or
raris in general. 


\section{Trivial fibrations and equivalences}\label{sect:TF}

Recall that the trivial fibrations in an $\infty$-cosmos are the
isofibrations which are also equivalences.  They are the objects of
a full subcategory $\TF(\ck)$ of $\ck^\isf$ which, following Proposition 6.1.5(ii) of \cite{elements},
is a cosmologically embedded $\infty$-cosmos.  To understand accessibility in this context, we can now follow the
same steps as we did when dealing with lalis. In fact the fullness
makes things easier, so we do not give all the details.

\begin{proposition}\label{prop:tf-representable}
  If $\ck$ is an accessible $\infty$-cosmos, for any sufficiently
  large $\lambda$, the canonical square 
\[ \xymatrix{
    \TF(\ck) \ar[r] \ar[d] & [\cg\op,\TF(\qCat)] \ar[d] \\
    \ck^\isf \ar[r] & [\cg\op,\qCat^\isf] } \]
is a pullback, where $\cg=\ck_\lambda$.
\end{proposition}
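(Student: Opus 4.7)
The plan is to mirror the proof of Proposition~\ref{prop:lali-representable}, with simplifications afforded by the fact that $\TF(\ck) \hookrightarrow \ck^\isf$ and $\TF(\qCat) \hookrightarrow \qCat^\isf$ are full inclusions. First I would observe that the right-hand vertical in the square is well-defined and sends $p\colon A_0 \twoheadrightarrow A_1$ to $\ck(J-,p)\colon \cg\op \to \qCat^\isf$, where $J\colon \cg \to \ck$ is the inclusion, and that this lifts to $[\cg\op,\TF(\qCat)]$ when $p$ is a trivial fibration, simply because an equivalence $p$ in $\ck$ satisfies the property that $\ck(G,p)$ is an equivalence of quasicategories for every $G$.

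The content of the pullback claim is the converse: given an isofibration $p$ in $\ck$ such that $\ck(G,p)$ is a trivial fibration of quasicategories for every $G\in\cg$, we must show $p$ is a trivial fibration in $\ck$, and at the level of morphisms the pullback condition is automatic by fullness of both inclusions. For the main step, choose $\lambda$ so that $\ck$ is $\lambda$-accessible as a simplicially enriched category and so that $\lambda$-filtered colimits are homotopy colimits in $\ck$, and set $\cg = \ck_\lambda$. Given an arbitrary $C \in \ck$, write $C = \colim(D\colon \cj \to \ck)$ as a $\lambda$-filtered colimit of objects of $\cg$. Then $\ck(C,p) = \lim(\ck(D-,p)\colon \cj\op \to \qCat^\isf)$, and exactly as in the proof of Proposition~\ref{prop:lali-representable}, using the flexible cofibrant replacement $p\colon Q \to \Delta 1$ and the fact that $\lambda$-filtered colimits are homotopy colimits in $\ck$, one checks that this limit is a homotopy limit in $\qCat^\isf$.

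Now the diagram $\ck(D-,p)$ lifts to $\TF(\qCat)$ by assumption. Since $\TF(\qCat) \hookrightarrow \qCat^\isf$ is a cosmological embedding (Proposition~6.1.5(ii) of \cite{elements}), Lemma~\ref{lem:creation} implies that this embedding creates the homotopy limit in question, so $\ck(C,p)$ is a trivial fibration of quasicategories. In particular $\ck(C,p)$ is an equivalence for every $C \in \ck$, which shows that $p$ is an equivalence in $\ck$; combined with the hypothesis that $p$ is an isofibration, this gives $p \in \TF(\ck)$.

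The main obstacle is the verification that $\ck(C,p)$ is a homotopy limit in $\qCat^\isf$, but this is essentially the same calculation carried out in the proof of Proposition~\ref{prop:lali-representable} and relies critically on Condition~(4) in the definition of accessible $\infty$-cosmos together with the fact that the forgetful projection $\qCat^\isf \to \qCat \times \qCat$ is cosmological, so equivalences in $\qCat^\isf$ are detected pointwise. Once this is in place, the fullness of the TF inclusion removes the need for any separate verification on morphisms, so the proof is noticeably shorter than that of its lali analogue.
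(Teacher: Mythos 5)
Your proof is correct and follows essentially the same route as the paper: reduce to showing that for arbitrary $C$, written as a $\lambda$-filtered colimit of $\lambda$-presentable objects, $\ck(C,p)$ is a homotopy limit in $\qCat^\isf$ of the trivial fibrations $\ck(G_i,p)$, and then conclude via creation of homotopy limits by the cosmological embedding $\TF(\qCat)\hookrightarrow\qCat^\isf$. The paper states this more tersely (explicitly deferring the details to the lali argument), but your filling-in of those details, including the observation that fullness disposes of the morphism-level check, matches its intent exactly.
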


\proof
Choose $\lambda$ such that
\begin{itemize}
\item $\ck$ is $\lambda$-accessible as a simplicially enriched category;
\item $\lambda$-filtered colimits are homotopy colimits in $\cc$.
\end{itemize}
The lower horizontal is the fully faithful simplicial functor sending $p\colon A'\to
A$ to $\ck(J-,p)\colon\cg\op\to\SSet$, where $J\colon\cg\to\ck$ is the
inclusion. The vertical maps are fully faithful. The upper horizontal
exists (and is therefore fully faithful) because an isofibration $p$
in $\ck$ is a trivial fibration if and only if $\ck(C,p)$ is one for
all $C\in\ck$.
We need to prove that it will be one provided only that $\ck(C,p)$ is
one for $C\in\cg$. 

Suppose then that $\ck(G,p)$ is a trivial fibration for all $G\in\cg$,
and let $C\in\ck$ be arbitrary. We may write $C$ as a
$\lambda$-filtered colimit $\colim_iG_i$ of objects in $\cg$, and this
colimit is also a homotopy colimit. Thus $\ck(C,p)$ is a homotopy
limit of the trivial fibrations $\ck(G_i,p)$, and so is itself a
trivial fibration.
\endproof

Just as in the case of lalis, this last result now allows us to
restrict to the case of $\qCat$.

\begin{proposition}\label{prop:tf-qcat}
  $\TF(\qCat)\to\qCat^\isf$ is an accessible cosmological
  embedding. 
\end{proposition}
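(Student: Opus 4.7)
My plan follows the template of Proposition~\ref{prop:lali-qcat}, but in a considerably simplified form, since trivial fibrations admit a direct lifting-property characterization and there is no need to introduce an auxiliary slice category of marked objects. First, the fact that $\TF(\qCat)\to\qCat^\isf$ is a cosmological embedding is supplied by Proposition~6.1.5(ii) of \cite{elements}. By Proposition~\ref{prop:cosmo-accessible}, it then suffices to prove that $\TF(\qCat)_0\to\qCat^\isf_0$ is an accessible functor between accessible categories, and since $\qCat^\isf_0$ is already accessible and accessibly embedded in $\SSet^\two_0$, it will be enough to show that $\TF(\qCat)_0$ is accessible and accessibly embedded in $\SSet^\two_0$.

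The key step is to exhibit $\TF(\qCat)_0$ as a small injectivity class in the locally presentable category $\SSet^\two_0$, and then to invoke Corollary~9.3 of \cite{BourkeLackVokrinek}. A map $p\colon X'\to X$ of simplicial sets lies in $\TF(\qCat)_0$ precisely when $X$ is a quasicategory and $p$ has the right lifting property against every boundary inclusion $\partial\Delta[n]\hookrightarrow\Delta[n]$; the latter condition characterizes the trivial fibrations of the Joyal model structure (whose cofibrations are the monomorphisms), and automatically forces $p$ to be an isofibration between quasicategories and a Joyal equivalence. The requirement that $X$ be a quasicategory is expressed by injectivity of $p$, viewed as an object of $\SSet^\two_0$, with respect to the morphisms
\[
\xymatrix @C1.5pc{
\emptyset \ar[r] \ar[d] & \emptyset \ar[d] \\
\Lambda^n_k \ar[r] & \Delta[n]
}
\qquad
\xymatrix @C1.5pc{
\emptyset \ar[r] \ar[d] & \emptyset \ar[d] \\
1 \ar[r] & \iso
}
\]
for each inner horn inclusion and for the endpoint inclusion $1\to\iso$, while the lifting condition on $p$ itself is encoded by injectivity with respect to the morphisms
\[
\xymatrix @C1.5pc{
\partial\Delta[n] \ar[r] \ar[d] & \Delta[n] \ar@{=}[d] \\
\Delta[n] \ar@{=}[r] & \Delta[n]
}
\]
of $\SSet^\two_0$, whose upper component is the boundary inclusion and whose lower component is the identity on $\Delta[n]$.

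Since any small injectivity class in a locally presentable category is accessible and accessibly embedded, this shows that $\TF(\qCat)_0\hookrightarrow\SSet^\two_0$ is an accessible functor between accessible categories; combined with the fact that $\qCat^\isf_0\hookrightarrow\SSet^\two_0$ is also accessible and accessibly embedded, it follows that $\TF(\qCat)_0\to\qCat^\isf_0$ is an accessible functor between accessible categories, completing the verification. I do not foresee any genuine obstacle in carrying out this plan; the only point requiring a little care is the translation of the standard lifting-property characterization of trivial fibrations into injectivity statements in $\SSet^\two_0$, and this is routine once the relevant morphisms have been identified.
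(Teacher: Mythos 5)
Your proof is correct and takes essentially the same route as the paper, whose own proof simply observes that accessibility of $\TF(\qCat)_0\to\qCat^\two_0$ follows from the accessibility of $\qCat_0$ together with the characterization of trivial fibrations as the maps with the right lifting property against the boundary inclusions $\partial\Delta[n]\to\Delta[n]$; your explicit presentation of $\TF(\qCat)_0$ as a small injectivity class in $\SSet^\two_0$ merely fills in the details the paper leaves implicit. (One small imprecision: the lifting property against boundary inclusions does not by itself force $p$ to lie between quasicategories---identity maps have it---so the first family of injectivity conditions is genuinely needed, as you correctly include.)
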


\proof It is a cosmological embedding by Proposition~\ref{prop:tf-representable}.
Thus by Proposition~\ref{prop:cosmo-accessible} it will suffice to show that
$\TF(\qCat)_0\to\qCat^\isf_0$ is accessible, or equivalently that
$\TF(\qCat)_0\to\qCat^\two_0$ is so. But this follows from the fact
that $\qCat_0$ is accessible, and that the trivial fibrations are the maps
with the right lifting property with respect to the boundary
inclusions $\partial\Delta[n]\to\Delta[n]$.
\endproof

\begin{theorem}\label{thm:TF}
  If $\ck$ is an accessible $\infty$-cosmos then so is $\TF(\ck)$, and
  the inclusion $\TF(\ck)\to\ck^\isf$ is an accessible cosmological embedding.
\end{theorem}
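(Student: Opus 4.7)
My plan is to follow the structure of the proof of Theorem~\ref{thm:lali} essentially verbatim, since all the necessary machinery has already been put in place by Propositions~\ref{prop:tf-representable} and~\ref{prop:tf-qcat}, and the fullness of $\TF(\ck)\hookrightarrow\ck^\isf$ only simplifies matters. First, by Proposition~\ref{prop:cosmo-accessible}, given that $\TF(\ck)\hookrightarrow\ck^\isf$ is already known to be a cosmological embedding, it suffices to establish that $\TF(\ck)_0$ is an accessible category and that its inclusion into $\ck^\isf_0$ is an accessible functor.

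The key ingredient is the pullback of Proposition~\ref{prop:tf-representable}:
\[
\xymatrix{
    \TF(\ck)_0 \ar[r] \ar[d] & [\cg\op,\TF(\qCat)]_0 \ar@{>>}[d] \\
    \ck^\isf_0 \ar[r] & [\cg\op,\qCat^\isf]_0
}
\]
where $\cg=\ck_\lambda$ for $\lambda$ sufficiently large. I would then verify the hypotheses of Proposition~\ref{prop:pb-acc}. For the right vertical, Proposition~\ref{prop:tf-qcat} asserts that $\TF(\qCat)_0\to\qCat^\isf_0$ is an accessible functor between accessible categories; moreover, since $\TF(\qCat)\hookrightarrow\qCat^\isf$ is a cosmological embedding it is replete, so the inclusion is also an isofibration of categories. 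All three properties (accessibility of source and target, accessibility of the functor, and the isofibration property) transfer to the functor categories $[\cg\op,-]_0$ indexed by the small category $\cg$, since these are computed pointwise. For the bottom horizontal, the category $\ck^\isf_0$ is accessible by the hypothesis that $\ck$ is an accessible $\infty$-cosmos, and accessibility of the functor $p\mapsto \ck(J-,p)$ reduces to pointwise accessibility of $\ck(G,-)\colon\ck^\isf_0\to\qCat^\isf_0$ for each $G\in\cg$, which is a consequence of Proposition~\ref{prop:enr-acc}.

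With these hypotheses in place, Proposition~\ref{prop:pb-acc} directly gives that $\TF(\ck)_0$ is accessible and both projections out of the pullback are accessible functors. In particular the left vertical $\TF(\ck)_0\to\ck^\isf_0$ is accessible, which is what was needed. I do not anticipate any substantive obstacle here: all of the real work has already been absorbed into the two preceding propositions, one providing the representability-based reduction to $\qCat$ via the interaction of homotopy limits with cosmological embeddings (Lemma~\ref{lem:creation}), and the other handling the base case $\TF(\qCat)$ via the small-injectivity description of trivial fibrations. The present theorem then follows by the same formal pullback argument already used to deduce Theorem~\ref{thm:lali}.
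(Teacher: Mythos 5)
Your proposal is correct and follows essentially the same route as the paper: reduce via Proposition~\ref{prop:cosmo-accessible} to the underlying categories, then apply Proposition~\ref{prop:pb-acc} to the pullback square of Proposition~\ref{prop:tf-representable}, with the right vertical handled by Proposition~\ref{prop:tf-qcat}, exactly as in the proof of Theorem~\ref{thm:lali}. The paper's own proof is just a terser version of this argument.
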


\proof
By Proposition~\ref{prop:cosmo-accessible} it will suffice to show
that $\TF(\ck)_0$ is an accessible category and the inclusion
$\TF(\ck)_0\to\ck^\isf_0$ is an accessible functor. This now follows
from Propositions~\ref{prop:pb-acc}, \ref{prop:tf-representable},
and~\ref{prop:tf-qcat}, just as in the proof of Theorem~\ref{thm:lali}.
\endproof 
 
An easy consequence of the above and the \emph{Brown factorisation lemma} is the following result.

\begin{proposition}\label{prop:axiomatics}
  Let $\ck$ be an accessible $\infty$-cosmos.  Then $\WE(\ck)_0$ is accessible and accessibly embedded in $\ck^\two_0$.
\end{proposition}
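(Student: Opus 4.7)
The strategy is to use a Brown-style factorisation to realise $\WE(\ck)_0$ as the preimage of $\TF(\ck)_0$ under an accessible endofunctor of $\ck^\two_0$, and then to invoke Proposition~\ref{prop:pb-acc}. For any $f\colon A \to B$ in $\ck$, I would use the standard mapping path space $Nf = A\times_B B^{\iso}$---the pullback of $f$ along $\ev_0\colon B^{\iso} \twoheadrightarrow B$---which provides a functorial factorisation
\[
A \xrightarrow{i_f} Nf \stackrel{\pi_f}{\twoheadrightarrow} B.
\]
Here $i_f$ is a section of the trivial fibration $p_f\colon Nf \to A$ and hence an equivalence, while $\pi_f$ is an isofibration because $(p_f,\pi_f)\colon Nf \to A \times B$ is the pullback of the isofibration $(\ev_0,\ev_1)\colon B^{\iso} \twoheadrightarrow B \times B$ along $f\times 1_B$, post-composed with the product projection to $B$. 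Brown's factorisation lemma, which in this context reduces to the 2-out-of-3 property of equivalences, then gives that $f \in \WE(\ck)_0$ if and only if $\pi_f$ is an equivalence, if and only if $\pi_f \in \TF(\ck)_0$.

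The next step is to verify that the functorial assignment $F\colon f \mapsto \pi_f$ yields an accessible endofunctor $\ck^\two_0 \to \ck^\two_0$. For this I would choose $\lambda$ large enough that power by $\iso$ preserves $\lambda$-filtered colimits in $\ck_0$ (Condition~(2) of Definition~\ref{defn:main}) and that pullbacks of isofibrations and finite products commute with $\lambda$-filtered colimits in $\ck$ (Lemma~\ref{lem:limits}). Since $\lambda$-filtered colimits in $\ck^\two_0$ are computed componentwise and $N(-)$ is built entirely from these operations, it follows that $F$ preserves $\lambda$-filtered colimits, hence is accessible.

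The argument would then conclude by forming the pullback
\[
\xymatrix{
\WE(\ck)_0 \ar[r] \ar@{^{(}->}[d] & \TF(\ck)_0 \ar@{^{(}->}[d] \\
\ck^\two_0 \ar[r]_-{F} & \ck^\two_0
}
\]
in which the right vertical is a full replete subcategory inclusion---hence an isofibration of categories---and is accessible between accessible categories by Theorem~\ref{thm:TF} combined with the fact that $\ck^\isf_0$ is accessibly embedded in $\ck^\two_0$ (Condition~(3) of Definition~\ref{defn:main}). The lower horizontal is accessible by the preceding step. Proposition~\ref{prop:pb-acc} then yields that $\WE(\ck)_0$ is accessible and that the inclusion $\WE(\ck)_0 \hookrightarrow \ck^\two_0$ is accessible, as required. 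The main technical obstacle is the accessibility of $F$, for which Lemma~\ref{lem:limits} (which in turn rests on $\ck$ being accessible as a simplicially enriched category) is precisely what is needed.
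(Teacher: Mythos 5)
Your proposal is correct and follows essentially the same route as the paper: both use the Brown factorisation through the mapping path object $A\times_B B^{\iso}$ to exhibit $\WE(\ck)_0$ as the pullback of the inclusion $\TF(\ck)_0\hookrightarrow\ck^\two_0$ along an accessible endofunctor of $\ck^\two_0$, and then invoke Proposition~\ref{prop:pb-acc}. If anything, your justification that the comparison functor is accessible (via Condition~(2) and Lemma~\ref{lem:limits}) is more explicit than the paper's.
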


\proof

By
\cite[Proposition~1.2.19]{elements} there is a pullback 
\[ \xymatrix{
    \WE(\ck)_0 \ar[r] \ar[d] & \TF(\ck)_0 \ar@{>>}[d] \\
    \ck^\two_0 \ar[r]_R & \ck^\two_0 } \]
where the vertical maps are the (fully faithful) inclusions and $R$
is the functor which sends a morphism $f\colon A\to B$ to
$p_f\colon Pf\to B$, constructed via the pullback 
\[ \xymatrix{
Pf \ar[r] \ar[d]_{(q_f,p_f)} & B^\iso \ar@{>>}[d] \\
A\x B \ar[r]_-{f\x1} & B\x B. } \]
As usual, we now apply Proposition~\ref{prop:pb-acc}. The right
vertical is an isofibration, and is accessible by Proposition~\ref{prop:tf-qcat}, so we only
need to check that $R$ is accessible.  But this is constructed using
finite limits, and these commute with sufficiently filtered colimits
in any accessible category. 
\endproof

\begin{remark}\label{rmk:open-problem}
  This hints at a possible alternative definition of an accessible
  $\infty$-cosmos in which we replace Condition~\eqref{item:hty-colims} concerning homotopy colimits by an axiom asserting that $\WE(\ck)_0$ is accessible and accessibly embedded in $\ck^\two_0$.  The previous result ensures that our usual definition implies this second one.  If the second definition was equivalent to our usual one, it would be useful as many proofs concerning the stability of accessibile $\infty$-cosmoi would become shorter.  However, we have not been able to prove this, and we leave it as an open problem.
\end{remark}

\section{Applications to the motivating examples}\label{sect:Apps}

In this section we apply the results of the previous three sections to show that accessible $\infty$-cosmoi are stable under a whole host of key further constructions.

\subsection{$\infty$-categories with limits}

Let $J$ be a simplicial set. There is a cosmological functor
$F_J\colon \ck\to\ck^\isf$ sending $A\in\ck$ to the isofibration
$A^{J^\triangleleft}\to A^J$ given by restriction along the inclusion
$J\to J^\triangleleft=1\ast J$ of \cite[Notation~4.2.6]{elements}.

Then the pullback
\[ \xymatrix{
    \ck_{\top,J}\ar[r] \ar[d] & \lali(\ck) \ar[d] \\ \ck \ar[r]_-{F_J} &
    \ck^\isf.} \]
is, by \cite[Proposition~6.3.13]{elements} and its proof,  the $\infty$-cosmos $\ck_{\top,J}$ of $\infty$-categories in $\ck$ with
$J$-limits, and moreover $\ck_{\top,J} \to \ck$ is a cosmological embedding.

\begin{theorem}\label{thm:limits}
  If $\ck$ is an accessible $\infty$-cosmos, then so is the
  $\infty$-cosmos $\ck_{\top,J}$ of $\infty$-categories in $\ck$ with
  $J$-shaped limits. The cosmological embedding
  $\ck_{\top,J}\to\ck$ is then accessible as well. 
\end{theorem}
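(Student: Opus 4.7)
The plan is to apply Proposition~\ref{prop:cosmo} to the displayed pullback square. The right-hand vertical $\lali(\ck)\to\ck^\isf$ is an accessible cosmological embedding by Theorem~\ref{thm:lali}, where $\ck^\isf$ is itself an accessible $\infty$-cosmos by Proposition~\ref{prop:fib-acc}. Cosmologicality of the bottom horizontal $F_J\colon\ck\to\ck^\isf$ is part of the construction in \cite[Proposition~6.3.13]{elements}, so the only substantive verification remaining is that $F_J$ is accessible.

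For this, it suffices to show that the underlying functor $(F_J)_0\colon\ck_0\to\ck^\isf_0$ preserves $\lambda$-filtered colimits for sufficiently large $\lambda$. Since $\ck^\isf_0$ is accessibly embedded in $\ck^\two_0$ by Condition~(3) of the definition of an accessible $\infty$-cosmos, I may choose $\lambda$ large enough that $\ck^\isf_0\hookrightarrow\ck^\two_0$ preserves and reflects $\lambda$-filtered colimits; it then suffices to check that the composite $\ck_0\to\ck^\isf_0\hookrightarrow\ck^\two_0$ is $\lambda$-accessible. This composite sends an object $A$ to the morphism $A^{J^\triangleleft}\to A^J$, and thus is determined by the pair of functors $J^\triangleleft\pitchfork-$ and $J\pitchfork-$ together with the natural transformation between them induced by the inclusion $J\hookrightarrow J^\triangleleft$. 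Both functors are accessible by Condition~(2) of Definition~\ref{defn:main}, so enlarging $\lambda$ if necessary makes both $\lambda$-accessible; since $\lambda$-filtered colimits in $\ck^\two_0$ are computed at domain and codomain, this yields the required accessibility of $F_J$.

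With both hypotheses of Proposition~\ref{prop:cosmo} in hand, that proposition applied to the pullback yields simultaneously that $\ck_{\top,J}$ is an accessible $\infty$-cosmos and that the inclusion $\ck_{\top,J}\to\ck$ is an accessible cosmological embedding, as required. No serious obstacle is expected in executing this plan: all of the depth of the argument has already been packaged into Theorem~\ref{thm:lali} and Proposition~\ref{prop:cosmo}, with the present theorem being essentially a clean corollary of those results together with the accessibility of the powering functors that is built into the definition of an accessible $\infty$-cosmos.
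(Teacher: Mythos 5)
Your proposal is correct and follows essentially the same route as the paper's proof: reduce via Proposition~\ref{prop:cosmo} and Theorem~\ref{thm:lali} to the accessibility of $F_J$, pass through the accessible embedding $\ck^\isf_0\hookrightarrow\ck^\two_0$, and conclude from the pointwise computation of filtered colimits in $\ck^\two_0$ together with the accessibility of the powering functors $J^\triangleleft\pitchfork-$ and $J\pitchfork-$. The only cosmetic difference is that you cite Condition~(2) of Definition~\ref{defn:main} directly where the paper invokes Proposition~\ref{prop:enr-acc}.
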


\proof
By  Proposition~\ref{prop:cosmo} and  Theorem~\ref{thm:lali}, it will suffice to show that the
cosmological functor $F_J$ is accessible, or equivalently that its
underlying ordinary functor $\ck_0\to (\ck^\isf)_0$ is accessible.
Now the fully faithful inclusion $(\ck^\isf)_0\to \ck^\two_0$ is
accessible, so it will suffice to show that the composite
$\ck_0\to\ck^\two_0$ preserves sufficiently filtered colimits. Since
these are formed pointwise in $\ck^\two_0$, we just need to know that
the two functors $\ck_0\to\ck_0$ sending $A$ to $A^{J\triangleleft}$
and to $A^J$ are accessible. This is true by Proposition~\ref{prop:enr-acc}.
\endproof

 Similar arguments apply to $\infty$-categories with a  set of limit shapes, whilst a dual argument applies to $\infty$-categories with colimits.  One can also combine limits and
colimits.

\subsection{Discrete objects}

An object $A$ of an $\infty$-cosmos $\ck$ is said to be {\em discrete}
\cite[Definition~1.2.26]{elements}  if the hom-quasicategory
$\ck(C,A)$ is in fact a Kan complex, for all $C\in\ck$. In particular,
the discrete objects of $\qCat$ are the Kan complexes.

By \cite[Proposition~6.1.6]{elements} and its proof, the discrete
objects of an $\infty$-cosmos $\ck$ form an $\infty$-cosmos
$\ck^\simeq$, whose inclusion into $\ck$ is a cosmological embedding.

\begin{theorem}\label{thm:discrete}
If $\ck$ is an accessible $\infty$-cosmos then so too is $\ck^\simeq$,
and the (fully faithful) inclusion is an accessible cosmological embedding. 
\end{theorem}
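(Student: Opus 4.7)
The plan is to mirror the strategy of Theorem~\ref{thm:limits}, but using the accessible cosmological embedding $\TF(\ck)\hookrightarrow\ck^\isf$ from Theorem~\ref{thm:TF} in place of $\lali(\ck)\hookrightarrow\ck^\isf$. For this I need an internal description of discreteness in terms of a trivial fibration associated to each $A\in\ck$.

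The key observation is that a quasicategory $X$ is a Kan complex if and only if the restriction map $X^\iso\to X^{\Delta[1]}$ induced by the monomorphism $\Delta[1]\hookrightarrow\iso$ is a trivial fibration --- it is always an isofibration by the Joyal structure, and is a trivial fibration exactly when every $1$-simplex is an isomorphism. Since $A\to 1$ is an isofibration in $\ck$, the Leibniz power axiom yields that $p_A\colon A^\iso\twoheadrightarrow A^{\Delta[1]}$ is an isofibration in $\ck$, and by enrichment $\ck(C,p_A)\cong \ck(C,A)^\iso\to\ck(C,A)^{\Delta[1]}$ for each $C\in\ck$. Thus $p_A$ is a trivial fibration in $\ck$ iff $\ck(C,A)$ is a Kan complex for every $C$, iff $A$ is discrete.

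Packaging this into a cosmological functor $F\colon\ck\to\ck^\isf$ with $F(A)=p_A$, I then have a pullback
\[ \xymatrix{
\ck^\simeq \ar[r] \ar[d] & \TF(\ck) \ar[d] \\
\ck \ar[r]_-F & \ck^\isf
} \]
of simplicially enriched categories: the pullback selects exactly those $A$ with $p_A$ a trivial fibration, and since $\TF(\ck)\hookrightarrow\ck^\isf$ is full, hom-objects agree with those of $\ck$. That $F$ is cosmological follows from a further application of the Leibniz power axiom (for isofibrations in $\ck^\isf$) together with the pointwise computation of cosmological limits in $\ck^\isf$ and the preservation of such limits by the powering functors $(-)^\iso$ and $(-)^{\Delta[1]}$.

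Accessibility of $F$ is the last step. Composing $F_0$ with the accessibly embedded inclusion $\ck^\isf_0\hookrightarrow\ck^\two_0$ (Condition (3) of Definition~\ref{defn:main}) gives a functor into $\ck^\two_0$ whose source and target components are $(\iso\pitchfork-)_0$ and $(\Delta[1]\pitchfork-)_0$; these are accessible by Condition (2). For $\lambda$ large enough that $\ck^\isf_0$ is closed in $\ck^\two_0$ under $\lambda$-filtered colimits and both powering functors are $\lambda$-accessible, $F_0$ then preserves $\lambda$-filtered colimits. Proposition~\ref{prop:cosmo} applied to the pullback now yields that $\ck^\simeq$ is an accessible $\infty$-cosmos and that $\ck^\simeq\hookrightarrow\ck$ is an accessible cosmological embedding. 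I do not foresee any serious obstacle here; the only delicate point is the characterisation of discreteness in terms of the Leibniz power of $\Delta[1]\hookrightarrow\iso$, and once that is in hand the rest is formal and parallels the pattern established for $\infty$-categories with limits.
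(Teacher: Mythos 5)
Your proof is correct and follows essentially the same route as the paper: both use the characterisation that $A$ is discrete iff $A^\iso\twoheadrightarrow A^{\two}$ is a trivial fibration (the paper cites Lemma~1.2.27 of \cite{elements} for this), form the pullback of $\TF(\ck)\to\ck^\isf$ along the resulting cosmological functor, check that functor is accessible via the power functors and Condition~(2)/Proposition~\ref{prop:enr-acc}, and conclude by Theorem~\ref{thm:TF} and Proposition~\ref{prop:cosmo}. The only difference is that you supply your own justification of the discreteness characterisation where the paper simply cites it.
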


\proof
By \cite[Lemma~1.2.27]{elements}, an object $A$ is discrete if and
only if the isofibration $A^\iso\to A^\two$ is in fact a trivial
fibration. Thus we have a pullback square
\[ \xymatrix{
    \ck^\simeq \ar[r] \ar[d] & \TF(\ck) \ar[d] \\ \ck \ar[r]_-E &
    \ck^\isf } \]
as in \cite[Proposition~6.1.6]{elements}, in which the vertical maps
are fully faithful isofibrations, and $E$ sends $A$ to the projection $A^\iso\to
A^\two$.  By Proposition~\ref{prop:enr-acc}, $E$ is accessible; and by Theorem~\ref{thm:TF}, the right leg of the pullback square is an accessible cosmological embedding.  Therefore the result follows by Proposition~\ref{prop:cosmo}.
\endproof

\subsection{Cartesian fibrations}

By \cite[Proposition~6.3.14]{elements}, there exists a cosmologically embedded $\infty$-cosmos $\Cart(\ck) \hookrightarrow \ck^{\isf}$ consisting of those isofibrations which are \emph{cartesian fibrations} in $\ck$, and this 
can be obtained as the pullback below.
\[ \xymatrix{
    \Cart(\ck) \ar[r] \ar[d] & \lali(\ck) \ar[d] \\ \ck^\isf \ar[r]_-K
    & \ck^\isf } \]
As explained in the proof of \cite[Proposition~6.3.14]{elements}, this $K\colon\ck^\isf\to\ck^\isf$ is the cosmological functor sending an
isofibration $p\colon E\twoheadrightarrow B$ to the map $k$ occurring in the diagram 
\[ \xymatrix{
    E^\two \ar@/^1pc/[drr]^{p^\two} \ar@/_1pc/[ddr]_{\cod} \ar@{.>}[dr]^k \\
    & B/p \ar[r] \ar[d] & B^\two \ar@{>>}[d]^{\cod} \\
    & E \ar@{>>}[r]_-p & B } \]
in which the inner square is a pullback. 


\begin{theorem}\label{thm:Cart}
  If $\ck$ is an accessible $\infty$-cosmos then so is $\Cart(\ck)$,
  and the cosmological embedding $\Cart(\ck)\to\ck^\isf$ is also accessible.
\end{theorem}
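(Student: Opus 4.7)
The plan is to prove Theorem~\ref{thm:Cart} by applying Proposition~\ref{prop:cosmo} to the defining pullback square
\[ \xymatrix{
    \Cart(\ck) \ar[r] \ar[d] & \lali(\ck) \ar[d] \\ \ck^\isf \ar[r]_-K
    & \ck^\isf. } \]
By Theorem~\ref{thm:lali}, the right-hand vertical $\lali(\ck)\to\ck^\isf$ is an accessible cosmological embedding (note that $\ck^\isf$ is itself an accessible $\infty$-cosmos by Proposition~\ref{prop:fib-acc}). Thus, by Proposition~\ref{prop:cosmo}, it suffices to show that $K\colon\ck^\isf\to\ck^\isf$ is an \emph{accessible} cosmological functor; the conclusion of Proposition~\ref{prop:cosmo} then delivers both that $\Cart(\ck)$ is an accessible $\infty$-cosmos and that its inclusion into $\ck^\isf$ is accessible.

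The main task is therefore to verify that the underlying ordinary functor $K_0\colon(\ck^\isf)_0\to(\ck^\isf)_0$ preserves sufficiently-filtered colimits. Since $(\ck^\isf)_0\hookrightarrow\ck^\two_0$ is an accessible embedding (Condition (3) of Definition~\ref{defn:main}), and since both powers and pullbacks in $\ck^\isf$ are computed from those of $\ck$, it suffices to check accessibility at the level of the underlying arrows in $\ck_0$. We pick a regular cardinal $\lambda$ large enough that simultaneously
\begin{itemize}
\item $\Delta[1]\pitchfork-\colon\ck_0\to\ck_0$ is $\lambda$-accessible, by Condition (2) of Definition~\ref{defn:main};
\item pullbacks of isofibrations commute with $\lambda$-filtered colimits in $\ck$, by Lemma~\ref{lem:limits};
\item $\ck^\isf_0\hookrightarrow\ck^\two_0$ preserves $\lambda$-filtered colimits.
\end{itemize}

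With these choices, the construction of $K(p)=k\colon E^\two\to B/p$ preserves $\lambda$-filtered colimits at every step: the two powers $E^\two$ and $B^\two$ are $\lambda$-accessible in $E$ and $B$ respectively; the pullback $B/p$ of the isofibration $\cod\colon B^\two\twoheadrightarrow B$ along $p$ commutes with $\lambda$-filtered colimits by the second bullet; and the induced arrow $k$ is obtained functorially from the universal property of this pullback together with $p^\two\colon E^\two\to B^\two$ and $\cod\colon E^\two\to E$. Hence $K_0$ is $\lambda$-accessible, and the application of Proposition~\ref{prop:cosmo} completes the proof.

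I expect no serious obstacle: the work is essentially bookkeeping to confirm that each piece of the formula for $K$ — powers, pullbacks of isofibrations, and induced maps from universal properties — is accessible, which is exactly what Definition~\ref{defn:main} and Lemma~\ref{lem:limits} were designed to guarantee. The only place one needs a little care is ensuring that the pullback defining $B/p$ is a pullback of isofibrations (so Lemma~\ref{lem:limits} applies), but this holds because $\cod\colon B^\two\twoheadrightarrow B$ is an isofibration in any $\infty$-cosmos.
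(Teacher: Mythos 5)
Your proposal is correct and follows the same overall strategy as the paper: form $\Cart(\ck)$ as the pullback of the accessible cosmological embedding $\lali(\ck)\to\ck^\isf$ (Theorem~\ref{thm:lali}) along $K$, invoke Proposition~\ref{prop:cosmo}, and reduce everything to showing that $K_0$ is accessible, which in turn reduces to the two functors $p\mapsto E^\two$ and $p\mapsto B/p$. The only real divergence is in how you handle $p\mapsto B/p$: you argue directly in $\ck$, choosing $\lambda$ so that powers are $\lambda$-accessible and pullbacks of isofibrations commute with $\lambda$-filtered colimits (Lemma~\ref{lem:limits}), whereas the paper instead tests accessibility on representables $\ck(C,-)$ (via Proposition~\ref{prop:enr-acc}) and observes that the analogous construction $K'\colon\SSet^\two\to\SSet$ is a right adjoint, hence accessible by Example~\ref{ex:adj-acc}. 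Both verifications are sound; yours is more direct and self-contained within $\ck$, while the paper's avoids having to track which colimits commute with which limits in $\ck$ by pushing the computation into $\SSet$. Your care in noting that $\cod\colon B^\two\twoheadrightarrow B$ is an isofibration (so that Lemma~\ref{lem:limits} applies, and so that the colimit cospan again has an isofibration leg) is exactly the point that needs checking in your version.
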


\proof
By Theorem~\ref{thm:lali} once again it will suffice to show that
$K\colon \ck^\isf\to\ck^\isf$ is accessible. Just as in the proof of
Theorem~\ref{thm:limits}, this will be the case provided that the two
functors $\ck^\isf_0\to\ck_0$ sending $p\colon E\to B$ to $E^\two$ and
to $B/p$ are accessible.

The first of these is given by the domain functor $\dom\colon
\ck^\isf_0\to\ck_0$ followed by $\two\pitchfork-\colon\ck_0\to\ck_0$;
each of these is accessible, hence so is their composite.

Write $F\colon\ck^\isf_0\to\ck_0$ for the other functor, sending $p$
to $B/p$. It will be accessible if and only if
$\ck(C,F)\colon\ck^\isf_0\to\SSet$ is so, for each $C\in\ck$. And
$\ck(C,F)$ is the composite of the  inclusion
$\ck^\isf_0\to\ck^\two_0$, followed by the hom-functor
$\ck(C,-)\colon\ck^\two_0\to \SSet^\two$, followed by the functor
$K'\colon\SSet^\two\to\SSet$ sending $q\colon X\to Y$ in $\SSet$ to the
analogous $Y/q$. Each of these three functors is accessible:
\begin{itemize}
\item the inclusion by Condition~\eqref{item:isofibs-accessible} in the definition of accessible
  $\infty$-cosmos;
\item the hom-functor by Proposition~\ref{prop:enr-acc};
  \item $K'$ by Example~\ref{ex:adj-acc} and the fact that $K'$ is a
    right adjoint;
\end{itemize}
and so their composite is also accessible.
\endproof

Dually (see \cite[Proposition~6.3.14]{elements} once
again) we have:

\begin{theorem}\label{thm:coCart}
  If $\ck$ is an accessible $\infty$-cosmos then so is $\coCart(\ck)$,
  and the cosmological embedding $\coCart(\ck)\to\ck^\isf$ is also accessible.
\end{theorem}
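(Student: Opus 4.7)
The plan is to deduce Theorem~\ref{thm:coCart} from Theorem~\ref{thm:Cart} by two applications of the $(-)\co$ duality of Proposition~\ref{prop:duality}, in exact analogy with how the corollary about $\rali$ was obtained from Theorem~\ref{thm:lali}. The key observation is that reversing $2$-cells interchanges cartesian and cocartesian fibrations: an isofibration $p$ in $\ck$ is cocartesian precisely when $p$ is cartesian in $\ck\co$, and a square is a morphism of cocartesian fibrations in $\ck$ precisely when it is a morphism of cartesian fibrations in $\ck\co$. This yields an identification
\[ \coCart(\ck) \;=\; \Cart(\ck\co)\co, \]
and, on the level of the ambient isofibrations, $((\ck\co)^\isf)\co = \ck^\isf$, so that the cosmological embedding $\coCart(\ck) \to \ck^\isf$ is exactly the $(-)\co$-dual of the cosmological embedding $\Cart(\ck\co) \to (\ck\co)^\isf$ that appears in Theorem~\ref{thm:Cart}.

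First I would apply Proposition~\ref{prop:duality} to conclude that $\ck\co$ is again an accessible $\infty$-cosmos. Then Theorem~\ref{thm:Cart}, applied to $\ck\co$, gives that $\Cart(\ck\co)$ is an accessible $\infty$-cosmos and that $\Cart(\ck\co) \to (\ck\co)^\isf$ is an accessible cosmological embedding. A second application of Proposition~\ref{prop:duality} then produces an accessible $\infty$-cosmos $\Cart(\ck\co)\co = \coCart(\ck)$ together with an accessible cosmological embedding $\coCart(\ck) \to \ck^\isf$, where we use, as in the analogous $\rali$ corollary, the fact that $(-)\co$ leaves underlying categories and their functors unchanged and so preserves accessibility of cosmological functors.

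Since this is a purely formal duality argument, I do not expect any serious obstacle; all the real work has already been done in Theorem~\ref{thm:Cart}. The only point that requires care is verifying the identification $\coCart(\ck) = \Cart(\ck\co)\co$ as cosmologically embedded sub-$\infty$-cosmoi of $\ck^\isf$, i.e.\ that cartesian-ness of an isofibration and the property of a square being a morphism of such really are invariant under the $\co$-duality in the appropriate sense. Once this identification is explicitly noted, the theorem follows mechanically from Proposition~\ref{prop:duality} and Theorem~\ref{thm:Cart}.
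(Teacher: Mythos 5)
Your proposal is correct and matches the paper's intent exactly: the paper disposes of $\coCart(\ck)$ with the single word ``Dually,'' and the duality it has in mind is precisely the $(-)\co$ argument you spell out, identical in form to the paper's own deduction of the $\rali(\ck)$ corollary from Theorem~\ref{thm:lali}. Your extra care in checking that $\coCart(\ck)=\Cart(\ck\co)\co$ and $((\ck\co)^\isf)\co=\ck^\isf$ as cosmologically embedded sub-$\infty$-cosmoi is exactly the right point to verify, and it goes through.
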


 Similarly we can deal with the $\infty$-cosmoi $\DiscCart(\ck)$ of
discrete cartesian fibrations, 
and $\DisccoCart(\ck)$ of  discrete cocartesian fibrations. 

\begin{theorem}
  If $\ck$ is an accessible $\infty$-cosmos then so are
  $\DiscCart(\ck)$ and $\DisccoCart(\ck)$, and the inclusions into
  $\ck^\isf$ are accessible cosmological embeddings.
\end{theorem}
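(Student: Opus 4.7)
The plan is to exhibit $\DiscCart(\ck)$ as a pullback of accessible cosmological functors, in exactly the style of the proofs of Theorems~\ref{thm:Cart} and~\ref{thm:discrete}. Recall from \cite{elements} that a cartesian fibration $p\colon E\twoheadrightarrow B$ in $\ck$ is \emph{discrete} precisely when it is conservative; concretely, this amounts to the requirement that the Leibniz comparison
\[ i_p\colon E^\iso \longrightarrow E^\two \x_{B^\two} B^\iso, \]
associated with the inclusion $\two\hookrightarrow \iso$ of simplicial sets, is a trivial fibration in $\ck$. This $i_p$ is always an isofibration, by the Leibniz axiom in the definition of an $\infty$-cosmos.

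Accordingly, I would define a cosmological functor $L\colon\ck^\isf\to\ck^\isf$ sending $p\colon E\twoheadrightarrow B$ to the isofibration $i_p$, and form the pullback
\[ \xymatrix{
    \DiscCart(\ck) \ar[r] \ar[d] & \TF(\ck) \ar[d] \\
    \Cart(\ck) \ar[r]_-{L} & \ck^\isf.
} \]
By Theorem~\ref{thm:TF} the right vertical is an accessible cosmological embedding, and by Theorem~\ref{thm:Cart} $\Cart(\ck)$ is an accessible $\infty$-cosmos with accessible embedding into $\ck^\isf$. Granted the accessibility of the restriction of $L$ to $\Cart(\ck)$, Proposition~\ref{prop:cosmo} then delivers that $\DiscCart(\ck)$ is an accessible $\infty$-cosmos and $\DiscCart(\ck)\hookrightarrow\Cart(\ck)$ is an accessible cosmological embedding; composing with $\Cart(\ck)\hookrightarrow\ck^\isf$ gives the required accessible embedding into $\ck^\isf$.

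To verify accessibility of $L_0\colon \ck^\isf_0\to \ck^\isf_0$, it suffices, after post-composing with the accessible inclusion $\ck^\isf_0\hookrightarrow\ck^\two_0$, to check that $p\mapsto i_p$ preserves sufficiently filtered colimits into $\ck^\two_0$. This functor is assembled from the domain and codomain functors $\ck^\isf_0\to\ck_0$ (whose accessibility follows pointwise), from the power functors $\iso\pitchfork-,\two\pitchfork-\colon \ck_0\to\ck_0$ (accessible by Condition~(2) of Definition~\ref{defn:main}), and from a pullback along the isofibration $B^\iso\twoheadrightarrow B^\two$ (which commutes with sufficiently filtered colimits by Lemma~\ref{lem:limits}). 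Hence $L_0$ is accessible, and so too is its restriction to the accessible $\Cart(\ck)_0$.

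The case of $\DisccoCart(\ck)$ is handled identically, using Theorem~\ref{thm:coCart} in place of Theorem~\ref{thm:Cart} and the analogous Leibniz comparison with roles reversed; alternatively one may invoke the $(-)\co$ duality of Proposition~\ref{prop:duality}, which interchanges (discrete) cartesian and cocartesian fibrations and respects accessibility. The only substantive point beyond the apparatus of Sections~\ref{sect:lali}--\ref{sect:TF} is the characterisation of discreteness via $i_p$ being a trivial fibration; once granted, the main obstacle is essentially a bookkeeping exercise in accessibility, since every remaining step is a routine application of Proposition~\ref{prop:cosmo}, Lemma~\ref{lem:limits}, and the accessibility of powers.
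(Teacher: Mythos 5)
Your argument is essentially correct, but it takes a different route from the paper. The paper's proof is a one-liner: by the proof of Proposition~6.3.15 of \cite{elements}, $\DiscCart(\ck)$ is the pullback of $\TF(\ck)\to\ck^\isf$ along the \emph{same} functor $K\colon\ck^\isf\to\ck^\isf$, $p\mapsto(k\colon E^\two\to B/p)$, that was already used (and shown accessible) in the proof of Theorem~\ref{thm:Cart}; so no new accessibility check is needed at all, and one simply cites Theorem~\ref{thm:TF} in place of Theorem~\ref{thm:lali}. (The paper also notes the alternative $\DiscCart(\ck)=\Cart(\ck)^\simeq$, which reduces everything to Theorems~\ref{thm:Cart} and~\ref{thm:discrete}; your argument is closest in spirit to this, but you unwind discreteness by hand via the conservativity criterion rather than quoting Theorem~\ref{thm:discrete} for the cosmos $\Cart(\ck)$.) Your route instead rests on the characterisation of discrete cartesian fibrations as the conservative cartesian fibrations, with conservativity expressed by the Leibniz cotensor $i_p\colon E^\iso\to E^\two\x_{B^\two}B^\iso$ being a trivial fibration; this is indeed available in \cite{elements}, and your accessibility check for $L_0$ (domain/codomain, powers by $\iso$ and $\two$, and a pullback along an isofibration, via Condition~(2) and Lemma~\ref{lem:limits}) is sound and parallels the treatment of $K$ in Theorem~\ref{thm:Cart}. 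What it buys is independence from the specific form of $K$; what it costs is an extra input (the conservativity characterisation) and a fresh accessibility computation that the paper's choice of $K$ makes unnecessary.

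One point to tighten: Proposition~\ref{prop:cosmo} requires the bottom leg of your square to be an accessible \emph{cosmological} functor, and you never verify that $L$ (or its restriction to $\Cart(\ck)$) preserves the isofibrations of $\ck^\isf$; this is a genuine Leibniz-style exercise, not automatic. The cleanest fix is to sidestep it: the cosmological embedding $\DiscCart(\ck)\hookrightarrow\ck^\isf$ is already known from \cite{elements}, so you only need accessibility of the underlying ordinary functor, which follows from your pullback square at the level of underlying categories via Proposition~\ref{prop:pb-acc} (the right leg being an accessible isofibration by Theorem~\ref{thm:TF}), and then Proposition~\ref{prop:cosmo-accessible} concludes. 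Your treatment of $\DisccoCart(\ck)$ by $(-)\co$ duality matches the paper.
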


\proof
In the case of $\DiscCart(\ck)$, this follows from the existence of a
pullback
\[ \xymatrix{
    {}\DiscCart(\ck) \ar[r] \ar[d] & \TF(\ck) \ar[d] \\
    \ck^\isf \ar[r]^{K} & \ck^\isf } \]
as in the proof of \cite[Proposition~6.3.15]{elements}, where $K$ is
the map constructed at the beginning of this section.  (Alternatively, this can be deduced from Theorems~\ref{thm:Cart}
and~\ref{thm:discrete}, since $\DiscCart(\ck)$ is $\Cart(\ck)^\simeq$.)
The case of $\DisccoCart(\ck)$ is dual.
\endproof

\subsection{Fibrations with fixed base}

In this section we consider various flavours of fibration $A'\to A$
for a fixed object $A$ of our $\infty$-cosmos.

In the case of cartesian fibrations, for example, there are pullback
squares of simplicially enriched categories
\[ \xymatrix{
    \Cart(\ck)_{/A} \ar[r] \ar[d] & \Cart(\ck) \ar[d] \\
    \ck_{/A} \ar[r] \ar[d] & \ck^\isf \ar[d]^{cod} \\
    1 \ar[r]_-A & \ck } \]
as explained in the proof of \cite[Proposition~6.3.14]{elements}. As
also explained there, the horizontal maps are neither cosmological nor
replete, but the vertical maps are cosmological, and those in the
upper square are cosmological embeddings. Furthermore, at the level of underlying categories,
 the lower horizontal is accessible and the right vertical maps are accessible isofibrations, using
 Proposition~\ref{prop:fib-acc} and Theorem~\ref{thm:Cart} respectively.  Therefore both pullback squares consist of accessible categories
 and accessible functors, and by the universal property of the lower pullback square in the $2$-category of accessible categories, the cosmological
 embedding $\Cart(\ck)_{/A} \hookrightarrow \ck^{\isf}$ is accessible too.  Therefore, by Proposition~\ref{prop:cosmo-accessible}, $\Cart(\ck)_{/A}$ is an accessible $\infty$-cosmos and $\Cart(\ck)_{/A} \hookrightarrow \ck^{\isf}$ an accessible cosmological embedding.

This proves the first case of the following result, and the proofs for the other three flavours of fibration are similar.
\begin{theorem}\label{thm:one-sided}
  If $\ck$ is an accessible $\infty$-cosmos and $A$ an object of
  $\ck$, then each of the following is an accessible $\infty$-cosmos
  and the inclusion in $\ck_{/A}$ is an accessible cosmological
  embedding:
  \begin{itemize}
  \item $\Cart(\ck)_{/A}$
  \item $\coCart(\ck)_{/A}$
  \item $\DiscCart(\ck)_{/A}$
  \item $\DisccoCart(\ck)_{/A}$.
  \end{itemize}
\end{theorem}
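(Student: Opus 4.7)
My plan is to imitate verbatim the argument given immediately above for the $\Cart(\ck)_{/A}$ case, swapping in the appropriate accessibility result for each remaining flavor. For $F \in \{\coCart, \DiscCart, \DisccoCart\}$, the analogous diagram
\[ \xymatrix{
    F(\ck)_{/A} \ar[r] \ar[d] & F(\ck) \ar[d] \\
    \ck_{/A} \ar[r] \ar[d] & \ck^\isf \ar[d]^{\cod} \\
    1 \ar[r]_-A & \ck } \]
of simplicially enriched categories is available, with both squares pullbacks, by the proofs of Propositions~6.3.14 and 6.3.15 of \cite{elements}; moreover the vertical maps are cosmological and the top two verticals are cosmological embeddings. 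The bottom pullback is exactly the one used to produce $\ck_{/A}$, and it was already analysed in the preceding discussion to give accessibility of $\ck_{/A}\hookrightarrow\ck^\isf$.

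For each of the three flavors I would then assemble the following ingredients: (i) the right vertical $\cod\colon\ck^\isf_0\to\ck_0$ is an accessible isofibration between accessible categories, by Proposition~\ref{prop:fib-acc}; (ii) the horizontal inclusion $F(\ck)\hookrightarrow\ck^\isf$ is a cosmological embedding whose underlying functor is accessible between accessible categories—provided by Theorem~\ref{thm:coCart} in the coCart case, and by the immediately preceding theorem on discrete fibrations in the two discrete cases; (iii) this same inclusion is an isofibration on underlying categories, since it is a replete embedding and any replete simplicially enriched inclusion becomes an isofibration on $0$-cells. With (i)--(iii) in hand, Proposition~\ref{prop:pb-acc} applied to the upper pullback square gives that $(F(\ck)_{/A})_0$ is accessible and that its two projections are accessible functors.

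To finish, I would upgrade this underlying-category statement to an $\infty$-cosmos statement via Proposition~\ref{prop:cosmo-accessible}: the inclusion $F(\ck)_{/A}\hookrightarrow\ck_{/A}$ is a cosmological embedding of $\infty$-cosmoi (as a pullback of such along a cosmological functor, by Proposition~\ref{prop:cosmo} applied inside the already-accessible $\infty$-cosmos $\ck^\isf$), its underlying functor has just been shown accessible, and its codomain $\ck_{/A}$ is an accessible $\infty$-cosmos by Proposition~\ref{prop:acc-slice}. Hence $F(\ck)_{/A}$ is an accessible $\infty$-cosmos and the inclusion into $\ck_{/A}$ is an accessible cosmological embedding, as required.

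I do not expect any new obstacle here: the only point to verify with any care is that the right vertical $F(\ck)\hookrightarrow\ck^\isf$ genuinely is an isofibration of categories—needed to trigger Proposition~\ref{prop:pb-acc}—and this is automatic from repleteness (condition (3) in the definition of replete says that $0$-arrows isomorphic to ones in $F(\ck)$ belong to $F(\ck)$, which is precisely the isomorphism-lifting property defining isofibrations in $\Cat$). Everything else is a straightforward four-fold application of the template already used for $\Cart(\ck)_{/A}$.
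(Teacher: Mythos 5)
Your argument is correct and follows essentially the same template as the paper's own proof: the same stacked pullback squares from Propositions~6.3.14--6.3.15 of \cite{elements}, Proposition~\ref{prop:pb-acc} fed by the accessibility of the embeddings $F(\ck)\hookrightarrow\ck^\isf$ established in the preceding theorems, and Proposition~\ref{prop:cosmo-accessible} to pass from underlying categories back to $\infty$-cosmoi. The one tiny slip is your parenthetical justification that the replete inclusion is an isofibration of underlying categories: this follows from conditions (1) and (2) of repleteness (an object isomorphic to one of $\cl$ is equivalent to it, and an isomorphism between objects of $\cl$ is an equivalence, hence lies in $\cl$) rather than from condition (3), which concerns parallel $0$-arrows isomorphic in a hom-quasicategory; this does not affect the argument.
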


\subsection{Two-sided fibrations}

If $\ck$ is an $\infty$-cosmos and $A,B\in\ck$, then there is an
$\infty$-cosmos $_{A\backslash}\Fib(\ck)_{/B}$ of 2-sided fibrations from $A$
to $B$, which is cosmologically embedded in $\ck_{/A\x
  B}$. Explicitly, this can be constructed as
\[ _{A\backslash}\Fib(\ck)_{/B} := \Cart(\coCart(\ck)_{/A})_{/A\x B\to A} \]
as explained in \cite[Section~7.2]{elements}. Similarly, by \cite[Section~7.4]{elements}, there is another cosmologically
embedded $\infty$-cosmos $_{A\backslash}\Mod(\ck)_{/B}$, given by the
discrete objects in $_{A\backslash}\Fib(\ck)_{/B}$.

\begin{theorem}
  Let $\ck$ be an accessible $\infty$-cosmos and $A,B\in\ck$. Then
  $_{A\backslash}\Fib(\ck)_{/B}$ and $_{A\backslash}\Mod(\ck)_{/B}$
  are also accessible $\infty$-cosmoi, and their inclusions into
  $\ck_{/A\x B}$ 
  are accessible
cosmological embeddings. 
\end{theorem}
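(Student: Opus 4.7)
The plan is to iterate the stability results already in hand, following the recursive formula ${}_{A\backslash}\Fib(\ck)_{/B} = \Cart(\coCart(\ck)_{/A})_{/A\x B\to A}$ from Section~7.2 of \cite{elements}. Starting from $\ck$ accessible, Theorem~\ref{thm:coCart} shows that $\coCart(\ck)$ is accessible and embeds accessibly in $\ck^\isf$. Applying Theorem~\ref{thm:one-sided} with the object $A\in\ck$ yields an accessible $\infty$-cosmos $\coCart(\ck)_{/A}$ together with an accessible cosmological embedding $\coCart(\ck)_{/A} \hookrightarrow \ck_{/A}$. Viewing $\coCart(\ck)_{/A}$ now as the ambient accessible $\infty$-cosmos and $A\x B\to A$ as the chosen base object in it, a second application of Theorem~\ref{thm:one-sided} produces an accessible $\infty$-cosmos $\Cart(\coCart(\ck)_{/A})_{/A\x B\to A}$, namely ${}_{A\backslash}\Fib(\ck)_{/B}$, together with an accessible cosmological embedding into $(\coCart(\ck)_{/A})_{/A\x B\to A}$.

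To deduce the claimed accessible cosmological embedding into $\ck_{/A\x B}$, I would compose the above with two further embeddings. First, Proposition~\ref{prop:F/A} applied to the accessible cosmological embedding $\coCart(\ck)_{/A} \hookrightarrow \ck_{/A}$ yields an accessible cosmological functor $(\coCart(\ck)_{/A})_{/A\x B\to A} \to (\ck_{/A})_{/A\x B\to A}$, which remains a cosmological embedding because the defining properties (repleteness, fullness on positive-dimensional arrows, and the isofibrations being those whose underlying morphism in $\ck$ is an isofibration) all pass through the slicing construction. Second, there is the standard identification of $\infty$-cosmoi $(\ck_{/A})_{/A\x B\to A} \cong \ck_{/A\x B}$, the ``slice of a slice'' isomorphism, which is trivially accessible. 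Composing everything yields the required accessible cosmological embedding ${}_{A\backslash}\Fib(\ck)_{/B} \hookrightarrow \ck_{/A\x B}$.

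Finally, ${}_{A\backslash}\Mod(\ck)_{/B}$ is by definition the $\infty$-cosmos of discrete objects in ${}_{A\backslash}\Fib(\ck)_{/B}$, so Theorem~\ref{thm:discrete} provides an accessible cosmological embedding ${}_{A\backslash}\Mod(\ck)_{/B} \hookrightarrow {}_{A\backslash}\Fib(\ck)_{/B}$; composing with the previous step gives the accessible cosmological embedding into $\ck_{/A\x B}$. The main technical check will be the slice-of-slice identification at the level of $\infty$-cosmoi --- in particular that the simplicial hom-spaces and isofibration classes on both sides agree --- together with the fact that $F_{/X}$ inherits the cosmological-embedding property from $F$. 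Both are essentially routine unwindings of definitions, so the real substance of the theorem is already contained in Theorems~\ref{thm:coCart}, \ref{thm:one-sided}, and \ref{thm:discrete}.
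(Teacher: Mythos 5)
Your proposal is correct and follows essentially the same route as the paper: iterate Theorem~\ref{thm:one-sided} via the formula ${}_{A\backslash}\Fib(\ck)_{/B}=\Cart(\coCart(\ck)_{/A})_{/A\x B\to A}$, use Proposition~\ref{prop:F/A} to handle the passage to $\ck_{/A\x B}$, and invoke Theorem~\ref{thm:discrete} for ${}_{A\backslash}\Mod(\ck)_{/B}$. The only difference is cosmetic: you make explicit the slice-of-slice identification and the preservation of the embedding property under slicing, whereas the paper leaves these implicit (the cosmological-embedding statement being already supplied by \cite{elements}, so that only accessibility of the composite functor needs checking).
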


\proof  
 By Theorem~\ref{thm:one-sided} we know that $\coCart(\ck)_{/A}$ is an
accessible $\infty$-cosmos, and that $\coCart(\ck)_{/A}\to\ck_{/A}$ is
an accessible functor. By the same theorem, applied to the
$\infty$-cosmos $\coCart(\ck)_{/A}$ and the object $A\x B\to A$
therein, we know that the $\infty$-cosmos
\[ _{A\backslash}\Fib(\ck)_{/B}=\Cart(\coCart(\ck)_{/A})_{A\x B\to A}
\]
is accessible, and that it has an accessible cosomological embedding
into
$(\coCart(\ck)_{/A})_{/A\x B\to A}$. So it
will suffice to show that the functor
\[ (\coCart(\ck)_{/A})_{/A\x B\to A} \to \ck_{/A\x B} \]
is accessible.  This follows from Proposition~\ref{prop:F/A} applied
to the accessible cosmological functor $\coCart(\ck)_{/A}\to\ck_{/A}$
and the object $A\x B\to A$.

The case of $_{A\backslash}\Mod(\ck)_{/B}$ now follows by Theorem~\ref{thm:discrete}.
\endproof


\begin{thebibliography}{10}

  
\bibitem{AR}
Ji{\v{r}}{\'{\i}} Ad{\'a}mek and Ji{\v{r}}\'\i\xspace Rosick{\'y}.
\newblock {\em Locally presentable and accessible categories}, volume 189 of
  {\em London Mathematical Society Lecture Note Series}.
\newblock Cambridge University Press, Cambridge, 1994.

\bibitem{BergnerBook}
  J.~E.~Bergner.
  \newblock {\em An Introduction to $(\infty,1)$-categories.}
  \newblock Cambridge University Press, Cambridge, 2018.



\bibitem{BorceuxQuinteiro}
Francis Borceux and Carmen Quinteiro.
\newblock Enriched accessible categories.
\newblock {\em Bull. Austral. Math. Soc.}, 54(3):489--501, 1996.

\bibitem{Bourke2019Accessible}
John Bourke. 
\newblock  {Accessible aspects of 2-category theory},
 \newblock {\em Journal of Pure and Applied Algebra}, 225(3), 
 106519, March 2021. 

\bibitem{BourkeLackVokrinek}
  John Bourke, Stephen Lack, and Luk\'as Vok\v r\'inek.
  \newblock {Adjoint functor theorems for homotopically enriched
    categories}, arXiv:2006.07843v1, 2020.
    
\bibitem{Gambino}
Nicola Gambino.  
\newblock Weighted colimits in simplicial homotopy theory.
\newblock {\em Journal of Pure and Applied Algebra}, 214(7):1193--1199, 2010.



\bibitem{JS-pseudopullbacks}
Andr{\'e} Joyal and Ross Street.
\newblock Pullbacks equivalent to pseudopullbacks.
\newblock {\em Cahiers Topologie G\'eom. Diff\'erentielle Cat\'eg.},
34(2):153--156, 1993.


\bibitem{qm2cat}
Stephen Lack.
\newblock A {Q}uillen model structure for 2-categories.
\newblock {\em $K$-Theory}, 26(2):171--205, 2002.


\bibitem{LackTendas2021}
Stephen Lack and Giacomo Tendas.
\newblock Flat versus filtered colimits in the enriched context, {\em
  Adv. Math.} 404(108381), 2022.

\bibitem{Lurie}
Jacob Lurie.
\newblock {\em Higher topos theory}, volume 170 of {\em Annals of Mathematics
  Studies}.
\newblock Princeton University Press, Princeton, NJ, 2009.

\bibitem{MakkaiPare}
Michael Makkai and Robert Par{\'e}.
\newblock {\em Accessible categories: the foundations of categorical model
  theory}, volume 104 of {\em Contemporary Mathematics}.
\newblock American Mathematical Society, Providence, RI, 1989.

\bibitem{Rezk-htyhty}
Charles Rezk.
\newblock A model for the homotopy theory of homotopy theory.
\newblock {\em Trans. Amer. Math. Soc.}, 353(3):973--1007, 2001.

\bibitem{Riehl2015The-2-category}
E.~Riehl and D.~Verity. 
\newblock The 2-category theory of quasi-categories.
\newblock {\em Adv. Math.}, 280:549--642, 2015.

\bibitem{elements}
E.~Riehl and D.~Verity.
\newblock {\em Elements of $\infty$-category theory}
\newblock volume 194 of Cambridge Studies in Advanced Mathematics,
Cambridge University Press, 2022.

\end{thebibliography}

\end{document}